\newtheorem{theorem}{Theorem}[section]
\newtheorem{lemma}[theorem]{Lemma}
\newtheorem{corollary}[theorem]{Corollary}
\theoremstyle{definition}
\newtheorem{definition}[theorem]{Definition}
\newtheorem{remark}[theorem]{Remark}
\newtheorem{assumption}[theorem]{Assumption}
\setlist[enumerate]{leftmargin=*, label=(\roman*)}
\numberwithin{equation}{section}
\def\Ck{{\rm C}_\kappa}
\def\Cb{{\rm C}_{\rm b}}
\def\Cbi{{\rm C}_{\rm b}^\infty}
\def\Lipb{{\rm Lip}_{\rm b}}
\def\Ub{{\rm U}_{\rm b}}
\def\Rb{[-\infty,\infty)}
\DeclareMathOperator{\id}{id}
\DeclareMathOperator{\supp}{supp}
\DeclareMathOperator{\tr}{tr}
\def\B{\mathcal{B}}
\def\d{{\rm d}}
\def\e{\mathbb{E}}
\def\E{\mathcal{E}}
\def\F{\mathcal{F}}
\def\H{\mathcal{H}}
\def\L{\mathcal{L}}
\def\n{\mathcal{N}}
\def\N{\mathbb{N}}
\def\p{\mathcal{P}}
\def\P{\mathbb{P}}
\def\R{\mathbb{R}}
\def\S{\mathbb{S}}
\def\T{\mathcal{T}}
\def\W{\mathcal{W}}
\def\epsilon{\varepsilon}
\def\phi{\varphi}
\def\one{\mathds{1}}
\begin{document}

\title[Limit theorems]{Nonlinear semigroups and limit theorems for convex expectations}

\author{Jonas Blessing$^{*,1}$}
\address{$^*$Department of Mathematics, ETH Zurich, 8092 Zurich, Switzerland}
\email{$^1$jonas.blessing@math.ethz.ch}

\author{Michael Kupper$^{**,2}$}
\address{$^{**}$Department of Mathematics and Statistics,  University of Konstanz, 78457 Konstanz, Germany}
\email{$^2$kupper@uni-konstanz.de}
\date{\today}

\begin{abstract}
 Based on the Chernoff approximation, we provide a general approximation result for convex 
 monotone semigroups which are continuous w.r.t. the mixed topology on suitable spaces of 
 continuous functions. Starting with a family $(I(t))_{t\geq 0}$ of operators, the semigroup 
 is constructed as the limit $S(t)f:=\lim_{n\to\infty}I(\nicefrac{t}{n})^n f$ and is uniquely 
 determined by the time derivative $I'(0)f$ for smooth functions. We identify explicit conditions 
 for the generating family $(I(t))_{t\geq 0}$ that are transferred to the semigroup $(S(t))_{t\geq 0}$
 and can easily be verified in applications. Furthermore, there is a structural link between 
 Chernoff type approximations for nonlinear semigroups and law of large numbers and central limit 
 theorem type results for convex expectations. The framework also includes large deviation results.

 \smallskip\noindent
 \emph{Key words:} Nonlinear semigroup, Chernoff approximation, comparison principle, 
  convex expectation, robust limit theorem, G-distribution, large deviations. \\
 \emph{MSC 2020:} Primary 47H20; 60F05; Secondary 47J25; 60F10; 60G50.
\end{abstract}

\maketitle

\section{Introduction}

In this article, we establish a general approximation result for convex monotone semigroups
which is based on the Chernoff approximation, see~\cite{BK,chernoff68, chernoff74}, that 
generalizes the Trotter--Kato product formula for linear semigroups, see~\cite{Kato,Trotter}.  
The idea is to start with a generating family $(I(t))_{t\geq 0}$ of operators 
$I(t)\colon\Cb\to\Cb$ which do not form a semigroup but have, for smooth functions, 
the derivative 
\[ I'(0)f:=\lim_{h\downarrow 0}\frac{I(h)f-f}{h}\in\Cb. \]
Here, the space $\Cb$ consists of all bounded continuous functions $f\colon\R^d\to\R$.
In order to obtain a corresponding semigroup $(S(t))_{t\geq 0}$ on $\Cb$ with prescribed 
generator $Af=I'(0)f$, we iterate the operators $(I(t))_{t\geq 0}$ over a sequence of
equidistant partitions with mesh size tending to zero and define the semigroup as the limit 
\begin{equation} \label{eq:intro}
 S(t)f:=\lim_{n\to\infty}I\big(\tfrac{t}{n}\big)^n f\in\Cb. 
\end{equation}
Recall that in the case of the Trotter--Kato formula the choice $I(t):=e^{tA_1}e^{tA_2}$ 
leads to a semigroup with generator $A=A_1+A_2$. There are two major questions
to address. 

First, we need a suitable topology for the convergence in equation~\eqref{eq:intro}. 
For Nisio semigroups, see~\cite{DKN20,NR21, Nisio}, it follows from
$I(s+t)f\leq I(s)I(t)f$ for all $s,t\geq 0$ and $f\in\Cb$ that the sequence 
$(I(2^{-n}t)^{2^n}f)_{n\in\N}$ is non-decreasing and the semigroup can be defined
pointwise as the monotone limit $S(t)f:=\sup_{n\in\N}I(2^{-n}t)^{2^n}f$. Typical 
examples have the form
\[ I(t)f:=\sup_{\lambda\in\Lambda}\big(S_\lambda(t)f-\phi(\lambda)t\big), \]
for a family $(S_\lambda)_{\lambda\in\Lambda}$ of linear semigroups $(S_\lambda(t))_{t\geq 0}$ 
and a function $\phi\colon\Lambda\to [0,\infty]$. It is also possible to define 
the semigroup as the limit of a decreasing sequence, see~\cite{BEK, FKN}, where the authors 
study semigroups corresponding to Markov processes with uncertain transition probabilities. 
While Chernoff's original work requires convergence w.r.t. a metric, convergence w.r.t. 
the supremum norm cannot be expected in the present setting. On the other hand,
it has been recently shown in~\cite{GNR} that the non-metrizable mixed topology 
between the supremum norm and the topology of uniform convergence of compacts
is suitable to study (linear) semigroups and characterize Markov processes via their 
infinitesimal generators. Note that a sequence $(f_n)_{n\in\N}$ converges in the mixed 
topology if and only if it is bounded w.r.t. the (weighted) supremum norm and converges 
uniformly on compacts. In particular, by Dini's theorem, the previously mentioned 
monotone convergence implies convergence in the mixed topology. Moreover, compactness 
in the mixed topology can be characterized by Arz\'ela-Ascoli's theorem, i.e., equicontinuity 
of the sequence $(I(\nicefrac{t}{n})^n f)_{n\in\N}$ rather than monotonicity. For the intended
application of the approximation result to limit theorems for convex expectations, it can not 
be guaranteed that the sequence $(I(\nicefrac{t}{n})^n f)_{n\in\N}$ is non-decreasing.

Second, since the convergence in equation~\eqref{eq:intro} is based on relative
compactness, we obtain a priori only convergence for a subsequence, i.e., 
\begin{equation} \label{eq:intro2}
 S(t)f:=\lim_{l\to\infty}I\big(\tfrac{t}{n_l}\big)^{n_l} f. 
\end{equation}
Following the arguments in~\cite{BDKN,BK}, one can only show that the convergent subsequence 
can be chosen independent of $f\in\Cb$ and $t\in\T$, where $\T\subset [0,\infty)$ is countable 
and dense. However, if the semigroup $(S(t))_{t\geq 0}$ was uniquely determined by its 
infinitesimal generator $Af=I'(0)f$, one could argue that the limit in equation~\eqref{eq:intro2} 
does not depend on the choice of the convergent subsequence and thus obtain the desired 
convergence in equation~\eqref{eq:intro}. In view of~\cite[Theorem~6.2]{GNR}, a possible
approach to show that the semigroup is uniquely determined by its generator could rely on
comparison principles for viscosity solutions of fully nonlinear equations, see, e.g.,~\cite{CIL, FS06}. 
In this article, we use instead the comparison principle from~\cite{BDKN} for strongly continuous  
convex monotone semigroups, where the generator is defined as a limit w.r.t. $\Gamma$-convergence
for functions in the upper Lipschitz set. It is important to note that the upper Lipschitz set is 
larger than the domain of the generator defined w.r.t. the mixed topology. In general, the latter 
is not invariant under the semigroup and the classical theory of m-accretive operators cannot be 
applied, see~\cite{BDKN} for a detailed discussion. 

By the comparison principle in~\cite{BDKN}, two strongly continuous convex monotone 
semigroups which are uniformly continuous w.r.t. the mixed topology coincide if they have 
the same upper Lipschitz set and their generators evaluated at smooth functions coincide. 
While the latter property can often be verified by straightforward computations, it remained 
an open problem how to show equality of the upper Lipschitz sets even in the case where 
the two semigroups are obtained by choosing different convergent subsequences in 
equation~\eqref{eq:intro2}. Hence, the first main contribution of this article is to provide 
explicit conditions under which two semigroups whose generators evaluated at smooth functions 
coincide also have the same upper Lipschitz set, see Theorem~\ref{thm:Lset}. By applying this 
result on semigroups which are obtained from choosing different convergent subsequences in
equation~\eqref{eq:intro2}, we obtain that the limit does not depend on the choice of the 
convergent subsequence. Hence, the desired convergence in equation~\eqref{eq:intro} follows and 
the semigroup $(S(t))_{t\geq 0}$ is uniquely determined by the derivative $I'(0)f$ for smooth 
functions $f$, see Theorem~\ref{thm:cher} and Corollary~\ref{cor:IJ}. The conditions required 
in Assumption~\ref{ass:cher} are explicit and can be verified for a variety of examples by 
straightforward computations. 
 
In Section~\ref{sec:LLN} and Section~\ref{sec:CLT}, we show that there is a structural link 
between Chernoff type approximations for convex monotone semigroups of the form~\eqref{eq:intro} 
and law of large numbers (LLN) and central limit theorem (CLT) type results for convex
expectations. The following diagram depicts this for the classical CLT: for every iid sequence 
$(\xi_n)_{n\in\N}$ with finite second moments, 
\begin{center}
 \begin{tikzpicture}
  \node (1) at (0,0) {$\e\left[f\left(\frac{1}{\sqrt{n}}\sum_{i=1}^n \xi_i\right)\right]$};
  \node[below=1cm of 1] (2) {$\int_{\R^d} f(y)\,\n(0,1)(\d y)$};
  \draw[->] (1) -- (2) node[midway,left,font=\small] {CLT};
  \node[right=0cm of 1] (3) {$\,=\; (I(\frac{1}{n})^n)(0)$};
  \node[right=0cm of 2] (4) {$=\; (S(1)f)(0)$,};
  \draw[->] (3) -- (4) node[midway,right,font=\small] {Chernoff};
 \end{tikzpicture}
\end{center}
where $(I(t)f)(x):=\e[f(x+\sqrt{t}\xi_1)]$ and $(S(t)f)(x):=\int_{\R^d}f(x+y)\,\n(0,t)(\d y)$ 
for all $t\geq 0$, $f\in\Cb$ and $x\in\R^d$. While in the linear case it is natural to 
use the convergence on the left-hand side to obtain the convergence on the right-hand side, 
in the nonlinear case we will use Theorem~\ref{thm:cher} to show the reverse implication
instead. To be precise: we will replace the linear expectation $\e[\,\cdot\,]$ by a convex 
expectation $\E[\,\cdot\,]$ and show that
\[ (S(1)f)(0)=\lim_{n\to\infty}\frac{1}{n}\E\left[nf\left(\frac{1}{\sqrt{n}}\sum_{i=1}^n \xi_i\right)\right]
	\quad\mbox{for all } f\in\Cb. \]
Furthermore, the generator of $(S(t))_{t\geq 0}$ given by 
\[ (Af)(x)=\E\left[\frac{1}{2}\xi_1^T D^2f(x)\xi_1\right] 
	\quad\mbox{for all } f\in\Cb^2 \mbox{ and } x\in\R^d \]
uniquely characterizes the semigroup, i.e., the limit is $G$-distributed with
\[ G\colon\S^d\to\R,\; a\mapsto\E\left[\frac{1}{2}\xi_1^T a\xi_1\right]. \]
This result is stated in Theorem~\ref{thm:CLT} and is illustrated by an
application to uncertain samples in Wasserstein spaces, see Theorem~\ref{thm:Wasser2}
and Theorem~\ref{thm:Wasser3}. For a brief overview on convex expectations,
we refer to Appendix~\ref{app:E}. To the best of our knowledge, this is also
the first result of this kind for convex rather than sublinear expectations.
In the sublinear case, Peng~\cite{Peng08} introduced the $G$-distribution by
\[ F_G\colon\Lipb\to\R,\; f\mapsto u^f(1,0), \]
where $u^f$ denotes the unique viscosity solution of the fully nonlinear PDE
\[ \begin{cases}
    \partial_t u(t,x)=G(D^2 u(x)), & (t,x)\in (0,\infty)\times\R^d, \\
    u(0,x)=f(x), & x\in\R^d
    \end{cases}\]
with $G(a):=\frac{1}{2}\sup_{\lambda\in\Lambda}\tr(\lambda\lambda^T a)$ for a bounded 
closed non-empty set $\Lambda\subset\R^{d\times d}$. The $G$-distribution is interpreted 
as a normal distribution with uncertain covariance and the corresponding CLT was proved
in~\cite{Peng08b,Peng10,Peng19}. We also want to mention extensions to L\'evy processes, 
see~\cite{BM, HJLP}, and results regarding convergence rates, see~\cite{HJL21, HL20, Krylov, Song}.
Note that the proofs given in~\cite{Peng08b,Peng19} rely heavily on a deep interior 
estimate of a fully nonlinear PDE while the more probabilistic approach in~\cite{Peng10} 
based on tightness and weak compactness requires an additional moment condition. 
Since~\cite[Theorem~6.2]{GNR} guarantees $F_G(f)=(S(1)f)(0)$ for all $f\in\Lipb$,
Theorem~\ref{thm:CLT} is consistent with previous results for the sublinear expectations.
Moreover, the proof of Theorem~\ref{thm:CLT} resembles the approach in~\cite{Peng10}, 
but covers the sublinear case under the more natural moment condition $\lim_{c\to\infty}\E[(|\xi_1|^2-c)^+]=0$ 
from~\cite{Peng19}.

Similarly, we can obtain LLN type results of the form
\[ (S(1)f)(0)=\lim_{n\to\infty}\frac{1}{n}\E\left[nf\left(\frac{1}{n}\sum_{i=1}^n \xi_i\right)\right]
	\quad\mbox{for all } f\in\Cb, \]
where the generator is given by $(Af)(x)=\E[\nabla f(x)\xi_1]$ for all $f\in\Cb^1$ and $x\in\R^d$.
This result is stated in Theorem~\ref{thm:LLN} and Theorem~\ref{thm:LLN2} shows that
the limit is maximally distributed, i.e., 
\[ (S(1)f)(0)=\sup_{x\in\R^d}\big(f(x)-\phi(y)\big) \quad\mbox{for all } f\in\Cb, \]
where $\phi(y):=\sup_{z\in\R^d}(yz-\E[z\xi])$ and $\xi:=\id_{\R^d}$.
In the sublinear case, this result goes back to Peng~\cite{Peng08b,Peng19} 
who introduced the maximal distribution by 
\[ F_\Lambda\colon\Lipb\to\R,\; f\mapsto\sup_{\lambda\in\Lambda}f(\lambda) \]
for a bounded closed non-empty set $\Lambda\subset\R^d$. There are two major
differences between the framework of this article and previous works.
First, we consider a sequence $(\psi_n)_{n\in\N}$ of recursively defined 
functions and are interested in the limit behaviour of 
\[ X_n:=\psi_n\big(\tfrac{1}{n}, 0, \xi_1,\ldots,\xi_n\big). \]
While the choice $\psi_n(\tfrac{1}{n}, 0, \xi_1,\ldots,\xi_n):=\frac{1}{n}\sum_{i=1}^n \xi_i$
is clearly admissible, the sample $\xi_{n+1}$ can also be randomly shifted by a
nonlinear function depending on the average of the previous samples $\xi_1,\ldots,\xi_n$,
see Corollary~\ref{cor:LLN}. Second, our results are not restricted to sublinear expectations 
and therefore create a uniform framework to cover both LLN type results for samples with 
uncertain distribution and large deviation results. For instance, Cram\'er's theorem 
can be seen as LLN for the entropic risk measure, see Corollary~\ref{cor:cramer}.
In view of this connection, the previously possibly unexpected scaling 
$\frac{1}{n}\E[nf(X_n)]$ instead of $\E[f(X_n)]$ suddenly appears to be very natural. 
We further extend Cram\'ers theorem to samples that are perturbed by a nonlinear 
function as in Corollary~\ref{cor:LLN}. In this case, the asymptotic convergence
rate is lowered according to the size of perturbation, see Theorem~\ref{thm:cramer}. 
Finally, we pick up the setting from Lacker~\cite{Lacker}, based on the weak convergence 
approach in~\cite{DE97}, who has previously worked in a similar setting with convex 
expectations and established a non-exponential extension of Sanov's theorem.
This leads to polynomial convergence rates that only require the existence of finite 
$p$-moments, see Theorem~\ref{thm:cramer2}. Large deviation theory based on suitable
classes of risk measures has recently also been explored by several other authors, 
see, e.g.,~\cite{BLT,Eckstein,FK,KZ}. The abstract results are again illustrated by 
uncertain samples in Wasserstein spaces, see Theorem~\ref{thm:Wasser}. Finally, 
we want to mention related LLN type results for capacities, see~\cite{MM}, 
and for coherent lower previsions, see~\cite{DM}.

The large deviation results obtained in Section~\ref{sec:LLN} show that the theory 
of nonlinear semigroups developed in Section~\ref{sec:SG} and the previous works~\cite{BDKN,BK}
can be used to obtain asymptotic convergence rates. Furthermore, it is even possible 
to derive non-asymptotic rates for the limit in equation~\eqref{eq:intro} which 
yields error bounds for the LLN and CLT type results in Section~\ref{sec:LLN}
and Section~\ref{sec:CLT}. These results require additional moment conditions 
and are consistent with the ones in~\cite{HJL21, HL20, Krylov, Song} for 
sublinear expectations. For details, we refer to~\cite{BJKL}.

\section{Convex monotone semigroups}
\label{sec:SG}

The results in this section rely strongly on the previous works~\cite{BDKN,BK} which have,
despite their generality, two major drawbacks. First, we only obtain a convergent subsequence
that exists due to a relative compactness argument. Furthermore, the comparison principle
in~\cite{BDKN} uniquely characterizes semigroups by their upper $\Gamma$-generator
defined on their upper Lipschitz set. This theoretical result is an analogue to the classical 
statement that strongly continuous linear semigroups are uniquely determined by their 
generators defined on their maximal domain but the upper $\Gamma$-generator and the upper 
Lipschitz can typically not be computed explicitly. Hence, we are looking for sufficient 
conditions under which nonlinear semigroups are uniquely determined by their generators evaluated 
at smooth functions. This resembles the concept of a core in the theory of linear semigroups. 
While~\cite[Section~3]{BDKN} provides results that allow to approximate the $\Gamma$-generator 
with smooth functions, the question whether two semigroups have the same upper Lipschitz set 
remained open. Theorem~\ref{thm:Lset} now shows that, under conditions which are consistent 
with the framework in~\cite{BDKN}, two semigroups whose generators evaluated at smooth 
functions coincide have the same upper Lipschitz set. In view of the results in~\cite{BDKN}
these semigroups are then uniquely determined by their generators evaluated at smooth
functions, see Theorem~\ref{thm:comp}. Moreover, we provide sufficient conditions on the family
$(I(t))_{t\geq 0}$ that transfer to the corresponding semigroup given by equation~\eqref{eq:intro2} 
so that Theorem~\ref{thm:comp} applies and equation~\eqref{eq:intro} is valid.
Throughout this article, we consider semigroups that are defined 
on spaces of continuous functions which do not exceed a certain growth rate at infinity. 
For that purpose, let $\kappa\colon\R^d\to (0,\infty)$ be a bounded continuous function with
\begin{equation} \label{eq:kappa}
 c_\kappa:=\sup_{x\in\R^d}\sup_{|y|\leq 1}\frac{\kappa(x)}{\kappa(x-y)}<\infty
\end{equation} 
and denote by $\Ck$ the space of all continuous functions $f\colon\R^d\to\R$ with
\[ \|f\|_\kappa:=\sup_{x\in\R^d}|f(x)|\kappa(x)<\infty. \]
In Section~\ref{sec:LLN} and Section~\ref{sec:CLT}, we will choose $\kappa(x):=(1+|x|^p)^{-1}$ 
for $p=1$ and $p=2$, respectively. Moreover, we endow $\Ck$ with the mixed topology 
between $\|\cdot\|_\kappa$ and the topology of uniform convergence on compacts sets. 
It is well-known, see~\cite[Proposition~A.4]{GNR}, that a sequence $(f_n)_{n\in\N}\subset\Ck$ 
converges to $f\in\Ck$ w.r.t. the mixed topology if and only if 
\[ \sup_{n\in\N}\|f_n\|_\kappa<\infty \quad\mbox{and}\quad
	\lim_{n\to\infty}\|f-f_n\|_{\infty,K}=0 \]
for all compact subsets $K\subset\R^d$, where $\|f\|_{\infty,K}:=\sup_{x\in K}|f(x)|$. 
Similarly, for a family $(f_h)_{h>0}\subset\Ck$ and $f\in\Ck$, it holds that $f=\lim_{h\downarrow 0}f_h$
w.r.t. the mixed topology if and only if $\sup_{h\in (0,h_0]}\|f_h\|_\infty<\infty$ for some 
$h_0>0$ and, for every $\epsilon>0$ and compact $K\subset\R^d$, there exists $h_\epsilon>0$ with 
$\|f-f_h\|_{\infty,K}<\epsilon$ for all $h\in (0,h_\epsilon]$. 
Subsequently, if not stated otherwise, all limits in $\Ck$ are understood w.r.t. the mixed topology 
and compact subsets are denoted by $K\Subset\R^d$. For more details, we refer to~\cite{GNR} 
and the references therein. Moreover, the set of all real-valued functions is endowed with the 
pointwise order, i.e., for functions $f,g\colon\R^d\to\R$ we write $f\leq g$ if and only if 
$f(x)\leq g(x)$ for all $x\in\R^d$. All order-related notions (sup, inf, max, min, $\limsup$, etc.) 
for such functions are understood with respect to this order and the positive part of a function 
$f\colon\R^d\to\R$ is denoted by $f^+:=\max\{f,0\}$. Let
\[ B_{\Ck}(r):=\{f\in\Ck\colon\|f\|_\infty\leq r\} \quad\mbox{and}\quad
	B_{\R^d}(r):=\{x\in\R^d\colon |x|\leq r\} \]
be the closed balls with radius $r\geq 0$ around zero, where $|\,\cdot\,|$ denotes the 
Euclidean distance. The space $\Cb$ consists of all 
bounded continuous functions $f\colon\R^d\to\R$ and $\Cbi$ is the space of 
all infinitely differentiable functions $f\in\Cb$ such that all partial derivatives are in $\Cb$. 
Let $\Lipb$ be the space of all bounded Lipschitz continuous functions 
$f\colon\R^d\to\R$ and, for every $r\geq 0$, the set $\Lipb(r)$ consists of all bounded
$r$-Lipschitz functions $f\colon\R^d\to\R$ with $\|f\|_\infty\leq r$. Here, 
$\|f\|_\infty:=\sup_{x\in\R^d}|f(x)|$ denotes the usual supremum norm. Let
\[ (\tau_x f)(y):=f(x+y) \quad\mbox{for all } x,y\in\R^d \mbox{ and } f\colon\R^d\to\R \]
be the shift operators and $\R_+:=\{x\in\R\colon x\geq 0\}$ the positive real numbers 
including zero.

\subsection{The upper Lipschitz set}
\label{sec:Lset}

We start with a formal definition of the (upper) Lipschitz set and some basic terminology
concerning nonlinear semigroups. Then, after an auxiliary lemma, we give explicit conditions 
which ensure that two semigroups whose generators coincide for smooth functions also 
have the same upper Lipschitz set. Recall that limits (and thus continuity) in $\Ck$ are 
understood w.r.t. the mixed topology rather than the norm $\|\cdot\|_\kappa$.

\begin{definition}	
 Let $(I(t))_{t\geq 0}$ be a family of operators $I(t)\colon\Ck\to\Ck$ with $I(0)f=f$ for all 
 $f\in\Ck$. The Lipschitz set $\L^I$ consists of all $f\in\Ck$ such that there exist $c\geq 0$ 
 and $t_0>0$ with
 \[ \|I(t)f-f\|_\kappa\leq ct \quad\mbox{for all } t\in [0,t_0]. \]
 The upper Lipschitz set $\L^I_+$ contains all $f\in\Ck$ such that there exist $c\geq 0$ and 
 $t_0>0$ with
 \[ \|(I(t)f-f)^+\|_\kappa\leq ct \quad\mbox{for all } t\in [0,t_0]. \]
 For $f\in\Ck$ such that the following limit exists and lies in $\Ck$, 
 we define the derivative of the mapping $t\mapsto I(t)f$ at zero by
 \[ I'(0)f:=\lim_{h\downarrow 0}\frac{I(h)f-f}{h}. \]
\end{definition}

While the constant $c\geq 0$ in the previous definition typically depends on $f$,
the parameter $t_0>0$ can often be chosen arbitrarily. Furthermore, since convergence w.r.t. 
the mixed topology incorporates norm boundedness, it holds that $f\in\L^I$ for all $f\in\Ck$ 
such that the limit $I'(0)f$ exists.

\begin{definition}
 A family $(S(t))_{t\geq 0}$ of operators $S(t)\colon\Ck\to\Ck$ is called a semigroup 
 if $S(0)=\id_{\Ck}$ and $S(s+t)f=S(s)S(t)f$ for all $s,t\geq 0$ and $f\in\Ck$.
 The semigroup is called convex (monotone) if the mapping
 $\Ck\to\R,\; f\mapsto (S(t)f)(x)$ is convex (monotone) for all $t\geq 0$ and $x\in\R^d$. 
 Moreover, the semigroup is called strongly continuous if the mapping 
 $\R_+\to\Ck,\; t\mapsto S(t)f$ is continuous for all $f\in\Ck$. 
 The generator is defined by 
 \[ A\colon D(A)\to\Ck,\; f\mapsto\lim_{h\downarrow 0}\frac{S(h)f-f}{h}, \] 
 where the domain $D(A)$ consists of all $f\in\Ck$ such that the previous limit exists.
\end{definition}

Since convergence w.r.t. the mixed topology incorporates norm boundedness, we obtain 
$D(A)\subset\L^S$. For every $t\geq 0$, $f\in\Ck$ and $x\in\R^d$ such that the mapping 
$s\mapsto (S(s)f)(x)$ is measurable and bounded from above, we define the pointwise integral
\[ \left(\int_0^t S(s)f\,\d s\right)(x):=\int_0^t (S(s)f)(x)\,\d s. \]

\begin{lemma} \label{lem:Lset}
 Let $(S(t))_{t\geq 0}$ be a convex semigroup on $\Ck$ such that $S(t)\colon\Ck\to\Ck$ 
 is continuous for all $t\geq 0$. For every $r,t\geq 0$, we assume that there exists $c\geq 0$ with 
 \begin{equation} \label{eq:Lset}
  \|S(s)f-S(s)g\|_\kappa\leq c\|f-g\|_\kappa
 \end{equation}
 for all $s\in [0,t]$ and $f,g\in B_{\Ck}(r)$. Then, for every $t\geq 0$ and $f\in D(A)$, 
 \[ S(t)f-f\leq\int_0^t S(s)(f+Af)-S(s)f\,\d s. \]
\end{lemma}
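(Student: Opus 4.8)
The plan is to fix $x\in\R^d$ and $t>0$, turn the functional inequality into a scalar differential inequality for $\Phi(s):=(S(s)f)(x)$, and integrate it; the crux is a one-step convexity estimate combined with the mixed-topology continuity of each $S(s)$, and the order in which the limits are taken matters. First I would record the regularity of $\Phi$. Since $f\in D(A)$, the difference quotients $v_h:=h^{-1}(S(h)f-f)$ converge to $Af$ in the mixed topology and are hence bounded in $\|\cdot\|_\kappa$, say $\|v_h\|_\kappa\le C$ for $h\in(0,1]$, so that $\|S(h)f-f\|_\kappa=h\|v_h\|_\kappa\le Ch$. Using the semigroup property $S(s+h)f=S(s)S(h)f$ and the local Lipschitz estimate~\eqref{eq:Lset}, this gives
\[ |\Phi(s+h)-\Phi(s)|\le\frac{c}{\kappa(x)}\|S(h)f-f\|_\kappa\le\frac{cC}{\kappa(x)}\,h \]
for $s,s+h\in[0,t]$. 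Thus $\Phi$ is Lipschitz, in particular absolutely continuous, and $\Phi(t)-\Phi(0)=\int_0^t\Phi'(s)\,\d s$ with $\Phi'(s)$ existing for a.e.\ $s$.

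The key step is the one-sided bound
\[ D^+\Phi(s):=\limsup_{h\downarrow0}\frac{\Phi(s+h)-\Phi(s)}{h}\le(S(s)(f+Af))(x)-(S(s)f)(x). \]
To prove it I would write $S(h)f=(1-h)f+h(f+v_h)$ for $h\in(0,1]$ and use convexity of the map $g\mapsto(S(s)g)(x)$:
\[ (S(s)S(h)f)(x)\le(1-h)(S(s)f)(x)+h\,(S(s)(f+v_h))(x), \]
whence $h^{-1}(\Phi(s+h)-\Phi(s))\le(S(s)(f+v_h))(x)-(S(s)f)(x)$. Since $f+v_h\to f+Af$ in the mixed topology and $S(s)$ is continuous for the mixed topology, the right-hand side converges pointwise to $(S(s)(f+Af))(x)-(S(s)f)(x)$ as $h\downarrow0$, which yields the bound.

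To conclude I would integrate. The estimate~\eqref{eq:Lset} also shows that $s\mapsto(S(s)(f+Af))(x)-(S(s)f)(x)$ is bounded on $[0,t]$ (by $\tfrac{c}{\kappa(x)}\|Af\|_\kappa$), hence integrable once its measurability is granted (a routine point, e.g.\ via the right-continuity of $s\mapsto S(s)g$). As $\Phi$ is Lipschitz, $\Phi'(s)=D^+\Phi(s)\le(S(s)(f+Af))(x)-(S(s)f)(x)$ for a.e.\ $s$, and integrating over $[0,t]$ gives the asserted inequality at $x$. Since $x\in\R^d$ was arbitrary, the pointwise functional inequality follows.

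The hard part is the limit in the key step, and in particular performing it in the right order. A tempting alternative is to telescope $(S(t)f)(x)-f(x)$ over an equidistant partition of mesh $h=t/n$, apply the one-step convexity estimate to each increment, and pass to a Riemann integral; but then one must replace $v_h$ by $Af$ simultaneously in all $n$ summands, and the resulting error is only controlled by $\tfrac{ct}{\kappa(x)}\|v_h-Af\|_\kappa$. As $v_h\to Af$ holds merely in the mixed topology (uniformly on compacts, not in $\|\cdot\|_\kappa$) while each $S(kh)$ is Lipschitz only for $\|\cdot\|_\kappa$, this error need not vanish. Fixing $s$ before letting $h\downarrow0$ decouples the approximation $v_h\approx Af$ from the number of steps, so that a single use of the mixed-topology continuity of $S(s)$ suffices; this is exactly where the hypothesis that each $S(s)$ is continuous enters, and it explains why the statement lives in the mixed topology rather than in the norm.
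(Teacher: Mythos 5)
Your proposal is correct and takes essentially the same route as the paper: you deduce local Lipschitz continuity of $s\mapsto (S(s)f)(x)$ from $f\in D(A)$ together with~\eqref{eq:Lset}, bound the right difference quotient by $(S(s)(f+v_h))(x)-(S(s)f)(x)$ via the convexity estimate (your convex-combination identity is exactly Lemma~\ref{lem:lambda} with $\lambda=h$), pass to the limit $h\downarrow 0$ using the mixed-topology continuity of $S(s)$, and integrate the a.e.\ derivative. The only cosmetic difference is that the paper applies Lemma~\ref{lem:lambda} a second time (with $\lambda=\tfrac12$) before invoking continuity of $S(t)$, whereas you apply the continuity of $S(s)$ directly to $f+v_h\to f+Af$; the two arguments are equivalent.
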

\begin{proof}
 For every $x\in\R^d$, it follows from $f\in D(A)\subset\L^S$ and inequality~\eqref{eq:Lset}
 that the mapping $\R_+\to\R,\; t\mapsto (S(t)f)(x)$ is locally Lipschitz continuous and 
 thus differentiable almost everywhere. Hence, by Rademacher's theorem, 
 \[ (S(t)f-f)(x)=\int_0^t \frac{\d}{\d s}(S(s)f)(x)\,\d s \quad\mbox{for all } t\geq 0. \]
 For every $t\geq 0$ and $h\in (0,1]$, Lemma~\ref{lem:lambda} implies
 \begin{align*}
  &\frac{S(t)S(h)f-S(t)f}{h}-S(t)(f+Af)+S(t)f \\
  &\leq S(t)\left(\frac{S(h)f-f}{h}+f\right)-S(t)(f+Af) \\
  &\leq\frac{1}{2}S(t)\left(2\left(\frac{S(h)f-f}{h}-Af\right)+f+Af\right)
  -\frac{1}{2}S(t)(f+Af).
 \end{align*}
 Since $S(t)$ is continuous, the right-hand side converges to zero as $h\downarrow 0$ 
 which yields the claim. 
\end{proof}

\begin{assumption} \label{ass:Lset}
 Let $(S(t))_{t\geq 0}$ be a convex monotone semigroup on $\Ck$ which satisfies 
 the following conditions:
 \begin{itemize}
  \item[(i)] For every $r,t\geq 0$, there exists $c\geq 0$ with 
   \[ \|S(s)f-S(s)g\|_\kappa\leq c\|f-g\|_\kappa \]
   for all $s\in [0,t]$ and $f,g\in B_{\Ck}(r)$. Moreover, it holds that $S(t)0=0$ for all $t\geq 0$. 
  \item[(ii)] The operator $S(t)\colon\Ck\to\Ck$ is continuous for all $t\geq 0$.
  \item[(iii)] For every $f\in\L^S_+\cap\Lipb$, there exist $L\geq 0$, $t_0>0$ and $\delta>0$ 
   with 
   \[ \|S(t)(\tau_x f)-\tau_x S(t)f\|_\kappa\leq Lt \] 
   for all $t\in [0,t_0]$ and $x\in B_{\R^d}(\delta)$. 
 \end{itemize}
\end{assumption}

Condition~(i) means that the semigroup is locally uniformly continuous w.r.t. the 
weighted supremum norm while condition~(ii) states that the semigroup is continuous 
at a fixed time w.r.t. the mixed topology. Condition~(iii) is, in particular, satisfied if the
semigroup is translation-invariant, i.e., $S(t)(\tau_x f)=\tau_x S(t)f$ for all $t\geq 0$, 
$f\in\Ck$ and $x\in\R^d$.

\begin{theorem} \label{thm:Lset}
 Let $(S(t))_{t\geq 0}$ and $(T(t))_{t\geq 0}$ be two semigroups satisfying
 Assumption~\ref{ass:Lset}. Moreover, we assume that $\Cb^\infty\subset D(A)\cap D(B)$
 and $(Af)^+\leq (Bf)^+$ for all $f\in\Cb^\infty$, where $A$ and $B$ denote the generators of
 $(S(t))_{t\geq 0}$ and $(T(t))_{t\geq 0}$, respectively. Then, 
 \[ \L^T_+\cap\Lipb\subset\L^S_+\cap\Lipb. \]
\end{theorem}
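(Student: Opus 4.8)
The plan is to show the inclusion $\L^T_+\cap\Lipb\subset\L^S_+\cap\Lipb$ by taking an arbitrary $f\in\L^T_+\cap\Lipb$ and producing constants $c\geq 0$ and $t_0>0$ witnessing $f\in\L^S_+$. The guiding intuition is that the upper Lipschitz condition is governed at infinitesimal level by the \emph{positive part} of the generator, and since the hypothesis gives $(Af)^+\leq(Bf)^+$ on smooth functions, the semigroup $(S(t))$ should generate no more upward growth than $(T(t))$. The central difficulty is that $f$ itself need not be smooth, so $Af$ and $Bf$ are not directly available; the entire argument must bridge from the smooth comparison $(Af)^+\leq(Bf)^+$ to a statement about the genuinely nonsmooth Lipschitz function $f$.

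First I would use the integral representation from Lemma~\ref{lem:Lset}, which applies to both semigroups under Assumption~\ref{ass:Lset}(i)--(ii): for $g\in D(A)$ one has $S(t)g-g\leq\int_0^t S(s)(g+Ag)-S(s)g\,\d s$. The strategy is to approximate $f$ by a family of smooth functions $f_\varepsilon\in\Cb^\infty$ (e.g.\ mollifications) that converge to $f$ in the mixed topology with uniformly bounded Lipschitz constants and sup-norms. For each such $f_\varepsilon$, Lemma~\ref{lem:Lset} controls $S(t)f_\varepsilon-f_\varepsilon$ in terms of $Af_\varepsilon$, and symmetrically $T(t)f_\varepsilon-f_\varepsilon$ in terms of $Bf_\varepsilon$. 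The comparison $(Af_\varepsilon)^+\leq(Bf_\varepsilon)^+$ then lets me dominate the upward movement under $S$ by the upward movement under $T$, using monotonicity and the Lipschitz estimate~\eqref{eq:Lset} to keep the integrands comparable along the time interval.

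The key technical maneuver is to convert the pointwise generator comparison into a positive-part estimate of the form $(S(t)f_\varepsilon-f_\varepsilon)^+\lesssim(T(t)f_\varepsilon-f_\varepsilon)^+ + (\text{error})t$, where the error comes from the discrepancy between $f_\varepsilon+Af_\varepsilon$ and $f_\varepsilon+Bf_\varepsilon$ inside the integrals and must be absorbed. Since $f\in\L^T_+$, there exist $c_T\geq 0$ and $t_0>0$ with $\|(T(t)f-f)^+\|_\kappa\leq c_T t$; I would transfer this bound to $f_\varepsilon$ via the continuity of $T(t)$ (Assumption~\ref{ass:Lset}(ii)) and the local uniform Lipschitz estimate~\eqref{eq:Lset}, then pass it through the comparison to obtain $\|(S(t)f_\varepsilon-f_\varepsilon)^+\|_\kappa\leq c\,t$ with a constant $c$ uniform in $\varepsilon$. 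Assumption~\ref{ass:Lset}(iii), controlling the commutator $\|S(t)(\tau_x f)-\tau_x S(t)f\|_\kappa\leq Lt$, is exactly what is needed to handle the translations implicit in comparing mollified versions to $f$ and to keep the constants uniform as $\varepsilon\downarrow 0$.

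I expect the main obstacle to be the final limit $\varepsilon\downarrow 0$: one needs $(S(t)f_\varepsilon-f_\varepsilon)^+\to(S(t)f-f)^+$ in a sufficiently strong sense to preserve the linear-in-$t$ bound, and this requires the continuity of $S(t)$ together with uniform (in both $\varepsilon$ and $t\in[0,t_0]$) control of the $\kappa$-weighted norms. The delicate point is that mixed-topology convergence only gives uniform convergence on compacts plus norm-boundedness, so transferring a bound of the form $\|\cdot\|_\kappa\leq ct$ through the limit demands that the weight $\kappa$ and the growth condition~\eqref{eq:kappa} with constant $c_\kappa$ be used to dominate the tails uniformly. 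Once the uniform bound survives the limit, the conclusion $f\in\L^S_+$ is immediate, and since $f$ was already in $\Lipb$ by hypothesis, $f\in\L^S_+\cap\Lipb$ follows.
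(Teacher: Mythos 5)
Your overall architecture matches the paper's proof---mollify $f$, exploit $f\in\L^T_+$ at the level of the mollifications, convert the generator comparison $(Af)^+\leq (Bf)^+$ into information about $S$, apply Lemma~\ref{lem:Lset} to $S$, and pass to the limit---but the execution of the decisive step fails. You propose to transfer the bound $\|(T(t)f-f)^+\|_\kappa\leq c_T t$ to the mollifications $f_\epsilon$ ``via the continuity of $T(t)$ and the Lipschitz estimate~\eqref{eq:Lset}''. That route only yields
\[ \|(T(t)f_\epsilon-f_\epsilon)^+\|_\kappa\;\leq\;\|(T(t)f-f)^+\|_\kappa+(c+1)\|f_\epsilon-f\|_\kappa\;\leq\; c_T t+C\epsilon, \]
an additive error of order $\epsilon$ that does not vanish with $t$. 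But the whole point of the mollified bound is to divide by $t$ and let $t\downarrow 0$ in order to control $\|(Bf_\epsilon)^+\|_\kappa$ (and hence, via the hypothesis $(Af_\epsilon)^+\leq(Bf_\epsilon)^+$, the quantity $\|(Af_\epsilon)^+\|_\kappa$) \emph{uniformly} in $\epsilon$; with the extra $C\epsilon$ the quotient $(c_T t+C\epsilon)/t$ blows up. The paper avoids this entirely: writing $f_n=\int\tau_{-y}f\,\eta_n(y)\,\d y$, it applies Jensen's inequality to the \emph{convex} operator $T(t)$ to get $T(t)f_n\leq\int T(t)(\tau_{-y}f)\,\eta_n(y)\,\d y$, and then uses the commutator bound of Assumption~\ref{ass:Lset}(iii) \emph{for $T$} (not for $S$, as in your sketch) together with~\eqref{eq:kappa} to obtain the error-free estimate $\|(T(t)f_n-f_n)^+\|_\kappa\leq c_\kappa\|(T(t)f-f)^+\|_\kappa+Lt\leq(cc_\kappa+L)t$ uniformly in $n$. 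The convexity of $T(t)$, which you never invoke, is indispensable here.

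A second, related gap is your ``key technical maneuver'': the claimed estimate $(S(t)f_\epsilon-f_\epsilon)^+\lesssim(T(t)f_\epsilon-f_\epsilon)^++(\text{error})t$ is asserted but cannot be derived from ``monotonicity and the Lipschitz estimate'', since $S$ and $T$ are unrelated operators and there is no way to compare $S(s)$-integrands with $T(s)$-integrands; in fact Lemma~\ref{lem:Lset} is never applied to $T$ in the correct argument. The actual bridge runs through the generators: from the uniform bound $\|(T(t)f_n-f_n)^+\|_\kappa\leq(cc_\kappa+L)t$ one deduces $\|(Bf_n)^+\|_\kappa\leq cc_\kappa+L$ by letting $t\downarrow 0$ (legitimate because $f_n\in\Cbi\subset D(B)$), hence $\|(Af_n)^+\|_\kappa\leq cc_\kappa+L$ by hypothesis, and only then does Lemma~\ref{lem:Lset} for $S$---combined with $Af_n\leq(Af_n)^+$, monotonicity of $S(s)$, and Assumption~\ref{ass:Lset}(i)---give $\|(S(t)f_n-f_n)^+\|_\kappa\leq c'(cc_\kappa+L)t$. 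Finally, the limit $n\to\infty$, which you single out as the main obstacle, is in fact the easy step: $f_n\to f$ in the mixed topology with $\|f_n\|_\kappa\leq c_\kappa\|f\|_\kappa$, so Assumption~\ref{ass:Lset}(ii) passes the pointwise bound $(S(t)f_n-f_n)\kappa\leq c'(cc_\kappa+L)t$ to $f$ directly. As written, your proposal would not close.
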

\begin{proof}
 Fix $f\in\L^T_+\cap\Lipb$ and choose $L\geq 0$, $t_0>0$ and $\delta\in (0,1]$ satisfying
 Assumption~\ref{ass:Lset}(iii). Let $\eta\colon\R^d\to\R_+$ be an infinitely 
 differentiable function with $\supp(\eta)\subset B_{\R^d}(\delta)$ and $\int_{\R^d}\eta(x)\,\d x=1$. 
 For every $n\in\N$ and $x\in\R^d$, we define $\eta_n(x):=n^d\eta(nx)$ and 
 \[ f_n(x):=(f*\eta_n)(x)=\int_{\R^d} f(x-y)\eta_n(y)\,\d y. \]
 For every $t\in [0,t_0]$, $n\in\N$ and $x\in\R^d$, we use 
 Jensen's inequality, Assumption~\ref{ass:Lset}(iii) and inequality~\eqref{eq:kappa} to 
 estimate
 \begin{align*}
  (T(t)f_n-f_n)(x)\kappa(x)
  &\leq\int_{B(\delta)}\big(T(t)(\tau_{-y}f)-\tau_{-y}f\big)(x)\kappa(x)\eta_n(y)\,\d y \\
  &\leq\int_{B(\delta)}(T(t)f-f)(x-y)\kappa(x)\eta_n(y)\,\d y+Lt \\ 
  &\leq c_\kappa\int_{B(\delta)}(T(t)f-f)(x-y)\kappa(x-y)\eta_n(y)\,\d y+Lt \\
  &\leq c_\kappa\big((T(t)f-f)^+\kappa\big)*\eta_n+Lt, 
 \end{align*}
 where $B(\delta):=B_{\R^d}(\delta)$. Taking the supremum over $x\in\R^d$ yields
 \[ \|(T(t)f_n-f_n)^+\|_\kappa\leq c_\kappa\|(T(t)f-f)^+\|_\kappa+Lt. \]
 Hence, it follows from $f\in\L^T_+$ that there exist $c\geq 0$ and $t_1\in (0,t_0]$ with
 \[ \|(T(t)f_n-f_n)^+\|_\kappa\leq (cc_\kappa+L)t
 	\quad\mbox{for all } n\in\N \mbox{ and } t\in [0,t_1]. \]
 Since $f_n\in\Cbi$, we obtain $\|(Af_n)^+\|_\kappa\leq\|(Bf_n)^+\|_\kappa\leq cc_\kappa+L$ 
 for all $n\in\N$. Moreover, inequality~\eqref{eq:kappa} guarantees that 
 $\|f_n\|_\kappa\leq c_\kappa\|f\|_\kappa$ for all $n\in\N$. Lemma~\ref{lem:Lset}, 
 the monotonicity of $S(s)$, Assumption~\ref{ass:Lset}(i) and inequality~\eqref{eq:kappa} 
 imply that there exists $c'\geq 0$ with
 \begin{align*}
  (S(t)f_n-f_n)\kappa 
  &\leq\int_0^t \big(S(s)(f_n+(Af_n)^+)-S(s)f_n\big)\kappa\,\d s \\
  &\leq\int_0^t c'\|(Af_n)^+\|_\kappa\,\d s \leq c'\big(cc_\kappa+L\big)t
 \end{align*}
 for all $n\in\N$ and $t\in [0,t_1]$. Taking the limit $n\to\infty$ yields
 \[ \|(S(t)f-f)^+\|_\kappa\leq c'\big(cc_\kappa+L\big)t \quad\mbox{for all } t\in [0,t_1] \]
 which shows that $\L^T_+\cap\Lipb\subset\L^S_+\cap\Lipb$. 
\end{proof}

While the following comparison principle does not have the same generality 
as~\cite[Theorem~2.10]{BDKN}, it has the advantage that the conditions~(i)--(v)
can easily be verified in many applications and that the semigroups are uniquely
determined by their generators evaluated at smooth functions. Moreover, 
Assumption~\ref{ass:cher} provides sufficient conditions on the generating family 
$(I(t))_{t\geq 0}$ such that Theorem~\ref{thm:comp} applies to the corresponding 
semigroup $(S(t))_{t\geq 0}$. In Subsection~\ref{sec:cher}, we also discuss how 
the conditions~(i)--(v) can be verified.

\begin{theorem} \label{thm:comp}
 Let $(S_1(t))_{t\geq 0}$ and $(S_2(t))_{t\geq 0}$ be two strongly continuous convex 
 monotone semigroups on $\Ck$ with $S_i(t)0=0$ and generators $A_1$ and $A_2$, 
 respectively, such that the following conditions are satisfied for $i=1,2$:
 \begin{itemize}
  \item[(i)] It holds $\Cbi\subset D(A_i)$ and $A_1 f=A_2 f$ for all $f\in\Cbi$. 
  \item[(ii)] For every $r,T\geq 0$, there exists $c\geq 0$ with
   \[ \|S_i(t)f-S_i(t)g\|_\kappa\leq c\|f-g\|_\kappa 
   	\quad\mbox{for all } t\in [0,T] \mbox{ and } f,g\in B_{\Ck}(r). \]
  \item[(iii)] For every $\epsilon>0$, $r,T\geq 0$ and $K\Subset\R^d$, there exist $c\geq 0$ 
   and $K'\Subset\R^d$ with
   \[ \|S_i(t)f-S_i(t)g\|_{\infty,K}\leq c\|f-g\|_{\infty,K'}+\epsilon \]
   for all $t\in [0,T]$ and $f,g\in B_{\Ck}(r)$. 
  \item[(iv)] For every $f\in\Lipb$ and $\epsilon>0$, there exist $\delta, t_0>0$ with
   \[ \|S_i(t)(\tau_x f)-\tau_x S_i(t)f\|_\kappa\leq\epsilon t \]
   for all $t\in [0,t_0]$ and $x\in B_{\R^d}(\delta)$.
  \item[(v)] It holds $S_i(t)\colon\Lipb\to\Lipb$ for all $t\geq 0$. 
 \end{itemize}
 Then, it holds $S_1(t)f=S_2(t)f$ for all $t\geq 0$ and $f\in\Ck$. 
\end{theorem}
\begin{proof}
 It follows from Theorem~\ref{thm:Lset} that
 \begin{equation} \label{eq:LS+}
  \L^{S_1}_+\cap\Lipb=\L^{S_2}_+\cap\Lipb
 \end{equation}
 and therefore~\cite[Theorem~2.10]{BDKN} implies
 \begin{equation} \label{eq:Sn2}
  S_1(t)f=S_2(t)f \quad\mbox{for all } (f,t)\in\Cbi\times\R_+.
 \end{equation}
 Indeed, the semigroups $(S_1(t))_{t\geq 0}$ and $(S_2(t))_{t\geq 0}$ clearly
 satisfy~\cite[Assumption~2.4]{BDKN} while equation~\eqref{eq:LS+}, condition~(ii) 
 and condition~(v) imply $\L^{S_1}\cap\Lipb\subset\L^{S_2}_+$
 and 
 \[ S_1(t)\colon\L^{S_1}\cap\Lipb\to\L^{S_1}\cap\Lipb
 	\quad\mbox{for all } t\geq 0. \]
 Moreover, due to condition~(i) and~(iv), we can apply~\cite[Lemma~3.6]{BDKN} 
 to conclude that $(S_1(t))_{t\geq 0}$ and $(S_2(t))_{t\geq 0}$ have the same upper 
 $\Gamma$-generator on $\L^{S_1}\cap\Lipb$. Hence, 
 \[ S_1(t)f\leq S_2(t)f \quad\mbox{for all } t\geq 0 \mbox{ and } f\in\Cbi \]
 and reversing the roles of $(S_1(t))_{t\geq 0}$ and $(S_2(t))_{t\geq 0}$ yields that 
 equation~\eqref{eq:Sn2} is valid. Since $\Cbi\subset\Ck$ is dense, it 
 follows from condition~(iii) that 
 \[ S_1(t)f=S_2(t)f \quad\mbox{for all } (f,t)\in\Ck\times\R_+. \qedhere \]
\end{proof}

\subsection{The Chernoff approximation}
\label{sec:cher}

Due to Theorem~\ref{thm:Lset} and the resulting Theorem~\ref{thm:comp}, we are now able to
improve the approximation result for convex monotone semigroups from~\cite{BDKN}.
Let $(I(t))_{t\geq 0}$ be a family of operators $I(t)\colon\Ck\to\Ck$ and $(h_n)_{n\in\N}\subset (0,1]$ 
be a sequence with $h_n\to 0$. For every $t\geq 0$, $n\in\N$ and $f\in\Ck$, we define 
\[ I(\pi^t_n)f:=I(h_n)^{k_n^t}f=\underbrace{\big(I(h_n)\circ\ldots\circ I(h_n)\big)}_{k_n^t \text{ times}}f, \]
where $k_n^t:=\max\{k\in\N_0\colon kh_n\leq t\}$ and $\pi_n^t:=\{kh_n\wedge t\colon k\in\N_0\}$
denotes the equidistant partition of $[0,t]$ with mesh size $h_n$.

\begin{assumption} \label{ass:cher}
 Let $(I(t))_{t\geq 0}$ be a family of operators $I(t)\colon\Ck\to\Ck$ which satisfy 
 the following conditions:
 \begin{itemize}
  \item[(i)] $I(0)=\id_{\Ck}$.
  \item[(ii)] $I(t)$ is convex and monotone with $I(t)0=0$ for all $t\geq 0$.
  \item[(iii)] There exists $\omega\geq 0$ with
   \[ \|I(t)f-I(t)g\|_\kappa\leq e^{\omega t}\|f-g\|_\kappa
   	\quad\mbox{for all } t\in [0,1] \mbox{ and } f,g\in\Ck. \]
  \item[(iv)] There exist $t_0>0$, $\delta\in (0,1]$ and $L\geq 0$ with
   \[ \|I(t)(\tau_x f)-\tau_x I(t)f\|_\kappa\leq Lrt|x| \]
   for all $t\in [0,t_0]$, $x\in B_{\R^d}(\delta)$, $r\geq 0$ and $f\in\Lipb(r)$. 
  \item[(v)] It holds that $\Cbi\subset\L^I$ and $I'(0)f\in\Ck$ exists for all $f\in\Cbi$. 
  \item[(vi)] For every $\epsilon>0$, $r,T\geq 0$ and $K\Subset\R^d$,
   there exist $c\geq 0$ and $K'\Subset\R^d$ with
   \[ \|I(\pi_n^t)f-I(\pi_n^t)g\|_{\infty,K}\leq\|f-g\|_{\infty,K'}+\epsilon \]
   for all $t\in [0,T]$, $f,g\in B_{\Ck}(r)$ and $n\in\N$.
  \item[(vii)] It holds that $I(t)\colon\Lipb(r)\to\Lipb(e^{\omega t}r)$ for all $r,t\geq 0$. 
 \end{itemize}
\end{assumption}

The key idea here is to identify conditions on the one-step operators $I(t)$ that are 
preserved during the iteration and thus transfer to the associated semigroup $(S(t))_{t\geq 0}$.
Although condition~(vi) is stated for the iterated operators $I(\pi_n^t)$,
there are several sufficient conditions for the one-step operators $I(t)$ 
that can be verified in applications. One of them is given in Remark~\ref{rem:cher}(b)
and such conditions are more systematically studied in~\cite[Subsection~2.5]{BKN}.
In many applications such as the ones presented in Section~\ref{sec:LLN} and
Section~\ref{sec:CLT}, most of the previous conditions can easily be verified
while the computation of the derivative $I'(0)f$ usually takes some computational
effort. However, for smooth functions this can often be achieved by elementary arguments
such as Taylor's formula with a suitable remainder estimate. In Theorem~\ref{thm:LLN}
and Theorem~\ref{thm:CLT}, we apply Theorem~\ref{thm:cher} with 
$(I(t)f)(x):=\frac{1}{t}\E[tf(x+t\xi_1)]$ and $(I(t)f)(x):=\frac{1}{t}\E[tf(x+\sqrt{t}\xi_1)]$,
respectively. The corresponding proofs consist in verifying Assumption~\ref{ass:cher}, 
where the most of the conditions are immediately satisfied and the computation of the 
derivative $I'(0)f$ is the main part of the proof. Furthermore, the need to satisfy 
Assumption~\ref{ass:cher} requires the random variables $(\xi_n)_{n\in\N}$ to satisfy 
certain natural moment conditions that also become apparent in the proofs. 
Further examples that involve similar computations can be found in~\cite[Section~5]{BDKN} 
and~\cite[Section~6]{BK}. Since monotone convergence implies convergence w.r.t. the
mixed topology, the Nisio semigroups studied in~\cite{DKN20,NR21} are also covered by 
the results of this section. Finally, we want to emphasize that, in contrast to the 
previous applications presented in~\cite[Section~5]{BDKN} and~\cite[Section~6]{BK}, 
the uniqueness of the semigroup $(S(t))_{t\geq 0}$ follows immediately now. 
In Theorem~\ref{thm:LLN2}, this allows us to explicitly represent the semigroup by 
the Hopf--Lax formula.

\begin{remark} \label{rem:cher}
 We want to discuss some of the previous conditions in more detail.
 \begin{itemize}
  \item[(a)] Condition~(iii) guarantees that the iterated operators $I(\pi_n^t)$ are uniformly 
   Lipschitz continuous. In many examples, one can choose $\omega:=0$.
  \item[(b)] Condition~(vi) can be verified as follows: suppose that there exists another 
   bounded continuous function $\tilde{\kappa}\colon\R^d\to (0,\infty)$ such that, 
   for every $\epsilon>0$, there exists $K\Subset\R^d$ with 
   $\sup_{x\in K^c}\frac{\tilde{\kappa}(x)}{\kappa(x)}\leq\epsilon$. 
   Moreover, we assume that there exists $c\geq 0$ with 
   \begin{equation} \label{eq:tilde}
    \|I(t)f\|_{\tilde{\kappa}}\leq e^{ct}\|f\|_{\tilde{\kappa}}
   \end{equation}
   for all $t\in [0,1]$ and $f\in\Ck$ with $\|f\|_{\tilde{\kappa}}\leq 1$. By induction, 
   one can show that 
   \[ \|I(\pi_n^t)f\|_{\tilde{\kappa}}\leq e^{ct}\|f\|_{\tilde{\kappa}} \]
   for all $t\geq 0$, $n\in\N$ and $f\in\Ck$ with $\|f\|_{\tilde{\kappa}}\leq e^{-ct}$. 
   Let $(f_n)_{n\in\N}\subset\Ck$ be a sequence with $f_n\downarrow 0$. 
   For every $\epsilon>0$, there exists $K\Subset\R^d$ with
   \[ |f_n(x)|\tilde{\kappa}(x)
   	=|f_n(x)|\kappa(x)\frac{\tilde{\kappa}(x)}{\kappa(x)}
   	\leq\|f_1\|_\kappa\frac{\tilde{\kappa}(x)}{\kappa(x)}<\epsilon \]
   for all $n\in\N$ and $x\in K^c$. Moreover, by Dini's theorem, there exists 
   $n_0\in\N$ with 
   \[ |f_n(x)|\tilde{\kappa}(x)\leq\|f_n\|_{\infty,K}\|\tilde{\kappa}\|_\infty<\epsilon \]
   for all $n\geq n_0$ and $x\in K$. This implies
   \[ \sup_{s\in [0,t]}\sup_{k\in\N}\|I(\pi_k^s)f_n\|_{\tilde{\kappa}}
   	\leq e^{ct}\|f_n\|_{\tilde{\kappa}}\to 0 \quad\mbox{as } n\to\infty. \]
   We obtain $\sup_{(t,x)\in [0,T]\times K}\sup_{k\in\N}\big(I(\pi_k^t)f_n\big)(x)\downarrow 0$
   as $n\to\infty$ for all $T\geq 0$ and $K\Subset\R^d$ and~\cite[Lemma~C.2]{BDKN} yields that 
   condition~(vi) is satisfied. In the subsequent sections, we choose $\tilde{\kappa}(x):=(1+|x|^p)^{-1}$ 
   for some $p\geq 1$ and define $I(t)$ via a nonlinear expectation. In this case, 
   inequality~\eqref{eq:tilde} is a moment condition. 
  \item[(c)] If $\kappa\equiv 1$, the conditions~(iii) and~(iv) imply 
   \[ I(t)\colon\Lipb(r)\to\Lipb(e^{(\omega+L)t}r) \quad\mbox{for all } r,t\geq 0. \]
 \end{itemize}
\end{remark}

\begin{theorem} \label{thm:cher}
 Let $(I(t))_{t\geq 0}$ be a family of operators satisfying Assumption~\ref{ass:cher}.
 Then, there exists a strongly continuous convex monotone semigroup $(S(t))_{t\geq 0}$ 
 on $\Ck$ with
 \begin{equation} \label{eq:cher}
  S(t)f=\lim_{n\to\infty}I(\pi_n^t)f \quad\mbox{for all } t\geq 0 \mbox{ and } f\in\Ck. 
 \end{equation}
 Furthermore, the semigroup has the following properties:
 \begin{itemize}
  \item[(i)] It holds that $f\in D(A)$ and $Af=I'(0)f$ for all $f\in\Ck$ such that $I'(0)f\in\Ck$ exists. 
   In particular, this is valid for all $f\in\Cbi$.
  \item[(ii)] It holds that $\|S(t)f-S(t)g\|_\kappa\leq e^{\omega t}\|f-g\|_\kappa$ for all $t\geq 0$ 
   and $f,g\in\Ck$.
  \item[(iii)] For every $\epsilon>0$, $r,T\geq 0$ and $K\Subset\R^d$, there exist 
   $K'\Subset\R^d$ and $c\geq 0$ with
   \[ \|S(t)f-S(t)g\|_{\infty,K}\leq c\|f-g\|_{\infty,K'}+\epsilon \]
   for all $t\in [0,T]$ and $f,g\in B_{\Ck}(r)$.  
  \item[(iv)] It holds that $\L^I\subset\L^S$ and $\L^I_+\subset\L^S_+$. Moreover, for every $t\geq 0$, 
   \[ S(t)\colon\L^S\to\L^S \quad\mbox{and}\quad S(t)\colon\L^S_+\to\L^S_+. \] 
  \item[(v)] For every $r, t\geq 0$, $f\in\Lipb(r)$ and $x\in B_{\R^d}(\delta)$,
   \[ \|S(t)(\tau_x f)-\tau_x S(t)f\|_\kappa\leq Lrte^{2\omega t}|x|. \]
   Furthermore, it holds that $S(t)\colon\Lipb(r)\to\Lipb(e^{\omega t}r)$ for all $r,t\geq 0$. 
 \end{itemize}
\end{theorem}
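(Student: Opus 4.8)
The plan is to construct the semigroup as a subsequential limit of the iterates $I(\pi_n^t)$ in the mixed topology, to verify that every such subsequential limit is a convex monotone semigroup satisfying Assumption~\ref{ass:Lset} with generator $I'(0)$ on $\Cbi$, and finally to use Theorem~\ref{thm:Lset} together with the comparison principle of~\cite{BDKN} to show that the limit is independent of the chosen subsequence; this upgrades subsequential convergence to the full convergence in~\eqref{eq:cher}, after which the listed properties are read off.

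First I would establish relative compactness of $(I(\pi_n^t)f)_{n\in\N}$ in the mixed topology. Boundedness in $\|\cdot\|_\kappa$ is immediate from Assumption~\ref{ass:cher}(ii),(iii): since $I(\pi_n^t)0=0$ and each factor is $e^{\omega h_n}$-Lipschitz, telescoping over the partition gives $\|I(\pi_n^t)f\|_\kappa\leq e^{\omega t}\|f\|_\kappa$. For $f\in\Lipb(r)$, Assumption~\ref{ass:cher}(vii) iterated over the partition yields $I(\pi_n^t)f\in\Lipb(e^{\omega t}r)$, so this family is uniformly bounded and uniformly Lipschitz, hence relatively compact in the mixed topology by Arzel\`a--Ascoli. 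Diagonalising over a countable dense subset of $\Lipb$ and a countable dense set $\T\subset[0,\infty)$ produces a single subsequence $(n_l)$ along which $I(\pi_{n_l}^t)g$ converges for all such $g$ and $t\in\T$. The Lipschitz estimate of Assumption~\ref{ass:cher}(iii) transfers this to every $f\in\Ck$ by approximating $f$ in $\|\cdot\|_\kappa$ by Lipschitz functions and running a Cauchy argument on compacts, while Assumption~\ref{ass:cher}(vi) supplies the tightness needed to pass from $t\in\T$ to all $t\geq0$ and to obtain strong continuity and the uniform equicontinuity of property~(iii). This defines $S(t)f$ along $(n_l)$.

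Next I would check that this limit is a convex monotone semigroup and record the inherited properties. Convexity, monotonicity and $S(t)0=0$ pass to the limit directly. The semigroup property $S(s+t)=S(s)S(t)$ follows by comparing $I(\pi_n^{s+t})$ with $I(\pi_n^s)I(\pi_n^t)$: the two iterated operators differ only by a single factor on a partition interval of length at most $h_n\to0$, and the uniform Lipschitz and equicontinuity bounds make this discrepancy vanish in the limit. Properties~(ii) and~(v) are obtained by passing the corresponding estimates for $I(\pi_n^t)$ (the $e^{\omega t}$-Lipschitz bound and Assumption~\ref{ass:cher}(iv),(vii)) to the limit, using lower semicontinuity of $\|\cdot\|_\kappa$ along mixed-topology convergence. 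For property~(iv), the inclusion $\L^I\subset\L^S$ comes from a telescoping estimate bounding $\|S(t)f-f\|_\kappa$ by a constant times $t$ for $f\in\L^I$, and the invariance $S(t)\colon\L^S_+\to\L^S_+$ uses the semigroup property and monotonicity: writing $S(h)f\leq f+(S(h)f-f)^+$ and applying the $e^{\omega t}$-Lipschitz bound gives $\|(S(t+h)f-S(t)f)^+\|_\kappa\leq e^{\omega t}\|(S(h)f-f)^+\|_\kappa$. The generator identity of property~(i), that $Af=I'(0)f$ whenever $I'(0)f\in\Ck$ exists, is the one delicate point: one shows $\tfrac1h(S(h)f-f)\to I'(0)f$ via a Chernoff-type estimate, telescoping $I(\pi_n^h)f-f$ and exploiting $I(s)f-f=sI'(0)f+o(s)$ from Assumption~\ref{ass:cher}(v) controlled by the uniform Lipschitz bound.

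Finally, the crux is independence of the limit from the subsequence. Suppose $S$ and $T$ arise from two subsequences. By the above, both satisfy Assumption~\ref{ass:Lset} and both have $\Cbi\subset D(A)\cap D(B)$ with $Af=I'(0)f=Bf$ on $\Cbi$; in particular $(Af)^+=(Bf)^+$. Applying Theorem~\ref{thm:Lset} in both directions yields $\L^S_+\cap\Lipb=\L^T_+\cap\Lipb$, and since the generators agree on $\Cbi$, the comparison principle of~\cite{BDKN} forces $S=T$. Hence every subsequence produces the same limit, so $I(\pi_n^t)f$ converges along the full sequence and~\eqref{eq:cher} holds. I expect the main obstacle to be exactly this uniqueness step: equality of the upper Lipschitz sets is what Theorem~\ref{thm:Lset} is built to deliver, but one must first verify carefully that both subsequential limits meet every hypothesis of that theorem and of the comparison principle—especially the generator identification~(i) and the translation estimate~(v)—which is where the bulk of the technical work lies.
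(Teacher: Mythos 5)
Your proposal is correct and follows essentially the same route as the paper: construct subsequential limit semigroups with properties (i)--(v) (which the paper imports from~\cite[Theorems~4.3 and~4.5, Lemma~2.8]{BDKN} rather than re-deriving via Arzel\`a--Ascoli, adding only the induction for $\L^I_+\subset\L^S_+$), and then obtain independence of the subsequence exactly as you describe, via Theorem~\ref{thm:Lset} applied in both directions together with the comparison principle~\cite[Theorem~4.6]{BDKN}. Your final remark about upgrading convergence from $t\in\T$ to arbitrary $t\geq 0$ is resolved in the paper by rerunning the argument with $\tilde{\T}:=\T\cup\{t\}$, which is implicit in your ``every subsequence has a further convergent subsequence with the same limit'' step.
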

\begin{proof}
 First, we verify the assumptions of~\cite[Section~4]{BDKN}. Assumption~\ref{ass:cher}
 guarantees that the conditions~(i)-(iv) and~(vi) of~\cite[Assumption~4.1]{BDKN} 
 and~\cite[Assumption~4.4]{BDKN} are satisfied. Moreover, by induction, we obtain
 \begin{equation} \label{eq:Lipb}
  I(\pi_n^t)\colon\Lipb(r)\to\Lipb(e^{\omega t}r)
  \quad\mbox{for all } r,t\geq 0 \mbox{ and } n\in\N. 
 \end{equation}
 Since $\Cbi\subset\L^I\subset\Cb$ is separable and dense, this shows 
 that~\cite[Assumption~4.1(v)]{BDKN} is satisfied. Now, let $\T\subset\R_+$ be a 
 countable dense set including zero. By~\cite[Theorem~4.3]{BDKN}, there exist strongly
 continuous convex monotone semigroup $(S(t))_{t\geq 0}$ on $\Ck$ and a subsequence 
 $(n_l)_{l\in\N}\subset\N$ with
 \begin{equation} \label{eq:cher2}
  S(t)f=\lim_{l\to\infty}I(\pi_{n_l}^t)f \quad\mbox{for all } (f,t)\in\Ck\times\T. 
 \end{equation}
 Moreover, the semigroup has the properties~(i)-(iii) and it holds that $\L^I\subset\L^S$
 while property~(v) follows from~\cite[Theorem~4.5]{BDKN}. For the invariance of the 
 (upper) Lipschitz set, we refer to~\cite[Lemma~2.8]{BDKN}. In order to show
 $\L^I_+\subset\L^S_+$, let $f\in\L^I_+$ and choose $c\geq 0$ and $t_0>0$ with
 \[ \|(I(t)f-f)^+\|_\kappa\leq ct \quad\mbox{for all } t\in [0,t_0]. \]
 By induction, we show that, for every $k\in\N$ and $n\in\N$ with $h_n\leq t_0$, 
 \begin{equation} \label{eq:Lset+}
  \|(I(h_n)^k f-f)^+\|_\kappa\leq ckh_n e^{\omega kh_n}. 
 \end{equation}
 For $k=1$, the previous estimate holds by assumption. Suppose that inequality~\eqref{eq:Lset+}
 holds for some fixed $k\in\N$. Lemma~\ref{lem:lambda} and the monotonicity of 
 $I(h_n)^k$ yield
 \begin{align*}
  I(h_n)^{k+1}f-f &=I(h_n)^kI(h_n)f-I(h_n)^k f+I(h_n)^k f-f \\
  &\leq h_n\left(I(h_n)^k\left(\frac{(I(h_n)f-f)^+}{h_n}+f\right)-I(h_n)f\right)
  +I(h_n)^k f-f.
 \end{align*}
 Hence, it follows from Assumption~\ref{ass:cher}(iii),~\cite[Lemma~2.7]{BK} and 
 inequality~\eqref{eq:Lset+} that
 \begin{align*}
  \|(I(h_n)^{k+1}f-f)^+\|_\kappa
  &\leq e^{\omega kh_n}\|(I(h_n)f-f)^+\|_\kappa+\|(I(h_n)^k f-f)^+\|_\kappa \\
  &\leq ch_n e^{\omega kh_n}+ckh_n e^{\omega kh_n}
  \leq c(k+1)h_n e^{\omega(k+1)h_n}.
 \end{align*}
 Due to Equation~\eqref{eq:cher2} and the strong continuity, inequality~\eqref{eq:Lset+} 
 transfers to $S(t)$, i.e., 
 \[ \|(S(t)f-f)^+\|_\kappa\leq cte^{\omega t} \quad\mbox{for all } t\geq 0. \]
 
 Second, we verify equation~\eqref{eq:cher} by showing that the limit in
 equation~\eqref{eq:cher2} does not depend on the choice of the convergent 
 subsequence. For every subsequence $(\tilde{n}_k)_{k\in\N}\subset\N$, there 
 exist a further subsequence $(\tilde{n}_{k_l})_{l\in\N}$ and a strongly continuous 
 convex monotone semigroup $(\tilde{S}(t))_{t\geq 0}$ on $\Ck$ with 
 \[ \tilde{S}(t)f=\lim_{l\to\infty}I(\pi_{\tilde{n}_{k_l}}^t)f 
 	\quad\mbox{for all } (f,t)\in\Ck\times\T \]
 and the properties~(i)-(v). Hence, Theorem~\ref{thm:comp} implies
 \[ S(t)f=\tilde{S}(t)f \quad\mbox{for all } t\geq 0 \mbox{ and } f\in\Ck. \]
 Since every subsequence has a further subsequence which converges to a limit
 that is independent of the choice of the subsequence, we conclude that
 \[ S(t)f=\lim_{n\to\infty}I(\pi_n^t)f \quad\mbox{for all } (f,t)\in\Ck\times\T. \]
 In order to show that the convergence in the previous equation holds for arbitrary 
 points in time, let $t\geq 0$ and define $\tilde{\T}:=\T\cup\{t\}$. Analogously to the
 previous arguments, we obtain a semigroup $(\tilde{S}(t))_{t\geq 0}$ with
 \[ S(t)f=\tilde{S}(t)f=\lim_{n\to\infty}I(\pi_n^t)f 
	\quad\mbox{for all } (f,t)\in\Ck\times\tilde{\T}. \]
 Since $t\geq 0$ was arbitrary, we obtain equation~\eqref{eq:cher}. 
\end{proof}

In particular, the semigroup $(S(t))_{t\geq 0}$ is uniquely determined by the derivative 
$I'(0)f$ for smooth functions $f\in\Cb^\infty$.

\begin{corollary} \label{cor:IJ}
 Let $(I(t))_{t\geq 0}$ and $(J(t))_{t\geq 0}$ be two families of operators satisfying 
 Assumption~\ref{ass:cher} with $I'(0)f\leq J'(0)f$ for all $f\in\Cbi$. Then, 
 \[ S(t)f\leq T(t)f \quad\mbox{for all } t\geq 0 \mbox{ and } f\in\Ck, \]
 where $(S(t))_{t\geq 0}$ and $(T(t))_{t\geq 0}$ are the semigroups associated to 
 $(I(t))_{t\geq 0}$ and $(J(t))_{t\geq 0}$, respectively.
\end{corollary}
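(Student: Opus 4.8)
The plan is to reduce the claim to a combination of Theorem~\ref{thm:Lset} with the comparison principle from~\cite{BDKN}. By Theorem~\ref{thm:cher}, the two families generate strongly continuous convex monotone semigroups $(S(t))_{t\geq 0}$ and $(T(t))_{t\geq 0}$ on $\Ck$ through the Chernoff limit~\eqref{eq:cher}, and the first step is to record that both of them satisfy Assumption~\ref{ass:Lset}. Indeed, condition~(i) is precisely property~(ii) of Theorem~\ref{thm:cher} (with $S(t)0=0$ inherited from $I(t)0=0$ through the limit~\eqref{eq:cher}), condition~(ii) is property~(ii)/(iii) of Theorem~\ref{thm:cher}, and condition~(iii) follows from the commutator estimate in property~(v), since every $f\in\L^S_+\cap\Lipb$ lies in some $\Lipb(r)$ and the bound $\|S(t)(\tau_x f)-\tau_x S(t)f\|_\kappa\leq Lrte^{2\omega t}|x|$ is then linear in $t$ on $[0,t_0]$ for $x\in B_{\R^d}(\delta)$. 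The same verification applies verbatim to $(T(t))_{t\geq 0}$.

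Next I would translate the hypothesis into the form required by Theorem~\ref{thm:Lset}. By Assumption~\ref{ass:cher}(v) the derivatives $I'(0)f$ and $J'(0)f$ exist for every $f\in\Cbi$, so Theorem~\ref{thm:cher}(i) yields $\Cbi\subset D(A)\cap D(B)$ together with $Af=I'(0)f$ and $Bf=J'(0)f$, where $A$ and $B$ are the generators of $(S(t))_{t\geq 0}$ and $(T(t))_{t\geq 0}$. Hence the assumption $I'(0)f\leq J'(0)f$ reads $Af\leq Bf$ on $\Cbi$, and since $a\mapsto a^+$ is monotone this gives $(Af)^+\leq (Bf)^+$ for all $f\in\Cbi$. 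Applying Theorem~\ref{thm:Lset} then produces the inclusion of upper Lipschitz sets
\[ \L^T_+\cap\Lipb\subset\L^S_+\cap\Lipb. \]

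Finally I would invoke the comparison principle~\cite[Theorem~4.6]{BDKN}. Both semigroups satisfy Assumption~\ref{ass:Lset}, their generators are ordered by $Af\leq Bf$ on the smooth test functions $\Cbi$, and the relevant upper Lipschitz sets are invariant: by Theorem~\ref{thm:cher}(iv) and~(v) we have $T(t)\colon\L^T_+\cap\Lipb\to\L^T_+\cap\Lipb$ and $S(t)\colon\L^S_+\cap\Lipb\to\L^S_+\cap\Lipb$, and the former set is contained in the latter by the inclusion just established. The comparison principle then upgrades the one-sided ordering of the generators to the one-sided ordering of the semigroups, yielding $S(t)f\leq T(t)f$ for all $t\geq 0$ and $f\in\Ck$, which is the assertion.

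The routine part is the bookkeeping verification of Assumption~\ref{ass:Lset}, which merely reorganizes the properties listed in Theorem~\ref{thm:cher}. The main obstacle is to invoke~\cite[Theorem~4.6]{BDKN} with the correct \emph{one-sided} hypotheses: whereas the uniqueness argument inside the proof of Theorem~\ref{thm:cher} used \emph{equality} of the two upper Lipschitz sets (obtained by applying Theorem~\ref{thm:Lset} in both directions) to conclude $S=T$, here only the single inclusion $\L^T_+\cap\Lipb\subset\L^S_+\cap\Lipb$ is available, and one must check that this inclusion together with $(Af)^+\leq (Bf)^+$ is exactly the input the comparison principle needs in order to deliver the one-sided conclusion $S(t)f\leq T(t)f$ rather than equality.
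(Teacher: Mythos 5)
Your proposal is correct and follows essentially the same route as the paper's proof: translate $I'(0)f\leq J'(0)f$ into $Af\leq Bf$ on $\Cbi$ via Theorem~\ref{thm:cher}(i), apply Theorem~\ref{thm:Lset} to obtain the one-sided inclusion of upper Lipschitz sets, and feed this together with the invariance from Theorem~\ref{thm:cher}(iv) and~(v) into the comparison principle of~\cite[Theorem~4.6]{BDKN}. The only cosmetic difference is that the paper works with the invariant set $\L^T\cap\Lipb\subset\L^S_+\cap\Lipb$ while you use $\L^T_+\cap\Lipb$; both choices are invariant under $(T(t))_{t\geq 0}$ and serve equally well as input to the comparison principle.
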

\begin{proof}
 This follows immediately from Theorem~\ref{thm:comp}. 
\end{proof}

\section{First order scaling limits}
\label{sec:LLN}

Throughout this section, we choose $\kappa\equiv 1$ and 
\[ \tilde{\kappa}\colon\R^d\to (0,\infty),\; x\mapsto (1+|x|)^{-1}. \]
Recall that the function $\tilde{\kappa}$ has been introduced in Remark~\ref{rem:cher}(b)
in order to verify Assumption~\ref{ass:cher}(vi) by means of a suitable moment condition.
In Appendix~\ref{app:E}, we gather some basic definitions and properties of convex expectations 
such as independence and weak convergence which resemble their classical analogues from probability 
theory and that are frequently used in the sequel.
Let $(\xi_n)_{n\in\N}\subset\H^d$ be an iid sequence of random vectors on a convex 
expectation space $(\Omega,\H,\bar{\E})$ with finite first moments and $(\psi_n)_{n\in\N}$
be a sequence of recursively defined functions $\psi_n\colon\R_+\times\R^d\times (\R^d)^n\to\R^d$. 
We define a sequence $(X_n)_{n\in\N}$ of random vectors by
\[ X_n:=\psi_n\big(\tfrac{1}{n},0,\xi_1,\ldots,\xi_n\big) \]
whose behaviour in the limit we are interested in. For 
$\psi_n(t,x,y_1,\ldots,y_n):=x+t\sum_{i=1}^n y_i$ we end up with an averaged sum of
iid samples, i.e., 
\[ X_n=\frac{1}{n}\sum_{i=1}^n \xi_i \quad\mbox{for all } n\in\N. \]
Since we are only interested in weak convergence, we mainly consider the distributions
of the vectors $X_n$. In this framework, requiring finite first moments means that the 
distribution of $\xi_1$ is well-defined even for continuous functions with at most linear 
growth at infinity, i.e., 
\[ F_{\xi_1}\colon {\rm C}_{\tilde{\kappa}}\to\R,\; f\mapsto\bar{\E}[f(\xi_1)]. \]
This functional is subsequently simply denoted by $\E[\,\cdot\,]$ and is supposed to be 
continuous from above in order to guarantee Assumption~\ref{ass:cher}(vi). We remark 
that, in the linear case, continuity from above is always given to due the fact that
$\lim_{c\to\infty}\e[|\xi_1|\one_{\{|\xi_1|\geq c\}}]=0$. Moreover, continuity from above on 
${\rm C}_{\tilde{\kappa}}$ is equivalent to the uniform integrability condition 
$\lim_{c\to\infty}\bar{\E}[(|\xi_1|-c)^+]=0$ from~\cite{Peng19}. We define $I(0):=\id_{\Cb}$ and
\[ (I(t)f)(x):=t\bar{\E}\left[\frac{1}{t}f\big(\psi_1(t,x,\xi_1)\big)\right] \]
for all $t>0$, $f\in\Cb$ and $x\in\R^d$. Then, the independence of $(\xi_n)_{n\in\N}$ implies 
\[ \big(I\big(\tfrac{1}{n}\big)^n f\big)(0)=\frac{1}{n}\bar{\E}\left[nf(X_n)\right]. \]
Hence, our operator theoretic result can be formulated equivalently by using classic
probabilistic notation. In Subsection~\ref{sec:LLN1},
we prove the previously discussed main result for first order scaling limits as well as
some immediate consequences. Motivated by Cram\'er's theorem, we then show that similar
convergence rates can be extended beyond the case of averaged sums of iid samples,
see Subsection~\ref{sec:bounds}. Moreover, we use a clever estimate from~\cite{Lacker}
to obtain polynomial convergence rates without requiring exponential moments. Finally, 
as an illustration of the previous rather abstract results, we consider convex expectations
that are defined as a (weighted) supremum over a set of probability measures, see
Subsection~\ref{sec:Wasser}.

\subsection{LLN for convex expectations and large deviations}
\label{sec:LLN1}

Let $\E\colon {\rm C}_{\tilde{\kappa}}\to\R$ be a convex expectation which is continuous from above, i.e., 
\begin{itemize}
 \item $\E[c]=c$ for all $c\in\R$, 
 \item $\E[f]\leq\E[g]$ for all $f,g\in {\rm C}_{\tilde{\kappa}}$ with $f\leq g$, 
 \item $\E[\lambda f+(1-\lambda)g]\leq\lambda\E[f]+(1-\lambda)\E[g]$ for all 
  $f,g\in {\rm C}_{\tilde{\kappa}}$ and $\lambda\in[0,1]$,
 \item $\E[f_n]\downarrow 0$ for all $(f_n)_{n\in\N}\subset {\rm C}_{\tilde{\kappa}}$ 
  with $f_n\downarrow 0$.
\end{itemize}
The convergence $f_n\downarrow 0$ is understood pointwise. However, it follows from Dini's
theorem and $\|f_n\|_{\tilde{\kappa}}\leq\|f_1\|_{\tilde{\kappa}}$ for all $n\in\N$ that 
$f_n\to 0$ in the mixed topology. By~\cite[Theorem~C.1]{BDKN}, there exists a convex monotone 
extension $\E\colon {\rm B}_{\tilde{\kappa}}\to\R$ such that, for every $\epsilon>0$ and $c\geq 0$, 
there exists $K\Subset\R^d$ with 
\begin{equation} \label{eq:tight}
 \E\left[\tfrac{c}{\kappa}\one_{K^c}\right]<\epsilon.
\end{equation}
Here, ${\rm B}_{\tilde{\kappa}}$ denotes the space of all Borel measurable functions $f\colon\R^d\to\R$ 
with $\|f\|_{\tilde{\kappa}}<\infty$. Furthermore, it holds that $\E[f_n]\downarrow\E[f]$ for all 
$(f_n)_{n\in\N}\subset {\rm C}_{\tilde{\kappa}}$ and $f\in {\rm C}_{\tilde{\kappa}}$ with $f_n\downarrow f$.

\begin{assumption} \label{ass:psi} 
 Let $\psi\colon\R_+\times\R^d\times\R^d\to\R^d$ be a continuous function which 
 satisfies the following conditions: 
 \begin{itemize}
  \item[(i)] There exists $L\geq 0$ such that, for all $t\in [0,1]$ and $x, y,z\in\R^d$, 
   \[ |x+\psi(t,y,z)-\psi(t,x+y,z)|\leq Lt|x|. \]
  Furthermore, it holds that $|\psi(t,x,y)-x|\leq L(1+|y|)t$ for all $t\in [0,1]$ and $x, y\in\R^d$.
  \item[(ii)] There exists a continuous function $\psi_0\colon\R^d\times\R^d\to\R^d$ with 
   \[ \lim_{h\downarrow 0}\sup_{x,y\in K}\left|\frac{\psi(h,x,y)-x}{h}-\psi_0(x,y)\right|=0
   	\quad\mbox{for all } K\Subset\R^d. \]
 \end{itemize}
\end{assumption}

The previous conditions are clearly satisfied for $\psi(t,x,y):=x+ty$ and
$\psi_0(x,y):=y$ in which case it holds 
$\psi_n(\frac{1}{n},x,y_1,\ldots,y_n)=x+\frac{1}{n}\sum_{i=1}^n y_i$. 
Another valid choice would be $\psi(t,x,y):=x+t\phi(x)+ty$ and $\psi_0(x,y):=\phi(x)+y$
for some Lipschitz continuous function $\phi\colon\R^d\to\R^d$ describing
a perturbation of the sample $\xi_{n+1}$ by a nonlinear function depending on 
the average of previous samples. In Section~\ref{sec:CLT}, the choice 
$\psi(t,x,y):=x+\sqrt{t}y$ leads to a CLT type result but here
Assumption~\ref{ass:psi}(i) is not satisfied since $|\psi(t,x,y)-x|=\sqrt{t}|y|$.
We define $I(0):=\id_{\Cb}$ and, for every $t>0$, $f\in\Cb$ and $x\in\R^d$, 
\[ (I(t)f)(x):=t\E\left[\frac{1}{t}f\big(\psi(t,x,\cdot)\big)\right] \]
Let $\Cb^1$ consist of all differentiable functions $f\in\Cb$ such that all partial derivatives 
are in $\Cb$ and denote by $xy:=\langle x,y\rangle$ the Euclidean inner product on $\R^d$. 
We want to point out that ${\rm C}_{\tilde{\kappa}}$ appears in this section only to ensure 
that $\E[\,\cdot\,]$ is defined and continuous from above on functions with at most linear 
growth at infinity. However, we consider $(I(t))_{t\geq 0}$ and $(S(t))_{t\geq 0}$ as operator 
families on $\Cb$. In particular, the definition of the (upper) Lipschitz sets and the properties 
from Theorem~\ref{thm:cher} are understood w.r.t. the supremum norm $\|\cdot\|_\infty$. Moreover,
the convergence in equation~\eqref{eq:LLN} and the definition of the generator $Af$ are understood 
as convergence in $\Cb$ w.r.t. the corresponding mixed topology. The sequence $(\psi_n)_{n\in\N}$ 
of functions $\psi_n\colon\R_+\times\R^d\times (\R^d)^n\to\R^d$ is recursively defined 
by $\psi_1:=\psi$ and
\[ \psi_{n+1}(t,x,y_1,\ldots,y_{n+1}):=\psi\big(t,\psi_n(t,x,y_1,\ldots,y_n),y_{n+1}\big). \]
The following theorem is the main result of this section.

\begin{theorem} \label{thm:LLN}
 Suppose that Assumption~\ref{ass:psi} holds. Then, there exists a strongly continuous 
 convex monotone semigroup $(S(t))_{t\geq 0}$ on $\Cb$ with
 \begin{equation} \label{eq:LLN}
  S(t)f=\lim_{n\to\infty}I\big(\tfrac{t}{n}\big)^n f\in\Cb
  \quad\mbox{for all } t\geq 0 \mbox{ and } f\in\Cb
 \end{equation}
 which has the properties~(i)-(v) from Theorem~\ref{thm:cher} with $\omega=0$ and 
 the constant $L\geq 0$ from Assumption~\ref{ass:psi}. For every $f\in\Cb^1$, it holds that 
 $f\in D(A)$ and 
 \[ (Af)(x)=\E[\nabla f(x)\psi_0(x,\cdot)] \quad\mbox{for all } x\in\R^d. \]
 In addition, for every convex expectation space $(\Omega,\H, \bar{\E})$ and iid sequence 
 $(\xi_n)_{n\in\N}\subset\H^d$ with $\bar{\E}[f(\xi_n)]=\E[f]$ for all $f\in\Cb$, 
 \begin{equation} \label{eq:LLN2}
  (S(1)f)(0)=\lim_{n\to\infty}\frac{1}{n}\bar{\E}\left[nf(X_n)\right]
  \quad\mbox{for all } f\in\Cb, 
 \end{equation}
 where $X_n:=\psi_n\big(\frac{1}{n},0,\xi_1,\ldots,\xi_n\big)$. 
\end{theorem}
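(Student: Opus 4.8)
The plan is to verify that the operator family $(I(t))_{t\geq 0}$ defined via $(I(t)f)(x):=t\E[\tfrac{1}{t}f(\psi(t,x,\cdot))]$ satisfies all seven conditions of Assumption~\ref{ass:cher} with $\kappa(x)=(1+|x|)^{-1}$, $\tilde\kappa$ equal to a power weight, and $\omega=0$. Once this is done, Theorem~\ref{thm:cher} immediately produces the semigroup $(S(t))_{t\geq 0}$, the convergence in equation~\eqref{eq:LLN}, and properties~(i)--(v). The probabilistic identity~\eqref{eq:LLN2} then follows from the independence of the $\xi_n$, which yields $(I(\tfrac{1}{n})^n f)(0)=\tfrac{1}{n}\bar\E[nf(X_n)]$ as already noted in the text, combined with the fact (from Theorem~\ref{thm:cher}) that $I(\pi_n^t)f\to S(t)f$ in the mixed topology and hence pointwise at $x=0$. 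The identification of the generator on $\Cb^1$ via $(Af)(x)=\E[\nabla f(x)\psi_0(x,\cdot)]$ is then handled by Theorem~\ref{thm:cher}(i), reducing it to computing $I'(0)f$.

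\emph{Checking the assumptions.} Conditions~(i) and~(ii) are immediate from the normalization $\E[c]=c$, monotonicity, convexity, and the linearity of $t\mapsto tf/t$; in particular $I(t)0=0$. Condition~(iii) with $\omega=0$ follows from monotonicity and translation-invariance of $\E$ under constants: $\|I(t)f-I(t)g\|_\infty\leq\|f-g\|_\infty$ since $\tfrac1t f(\psi)-\tfrac1t g(\psi)$ differs by at most $\tfrac1t\|f-g\|_\infty$ inside the expectation, scaled back by $t$. Condition~(iv) is where Assumption~\ref{ass:psi}(i) enters: the estimate $|x+\psi(t,y,z)-\psi(t,x+y,z)|\leq Lt|x|$ controls the commutator between $I(t)$ and the shift $\tau_x$ for $f\in\Lipb(r)$, giving the bound $\|I(t)(\tau_x f)-\tau_x I(t)f\|_\infty\leq Lrt|x|$. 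Condition~(v), namely $\Cbi\subset\L^I$ with $I'(0)f\in\Cb$, uses Assumption~\ref{ass:psi}(ii): for $f\in\Cbi$ one writes $\tfrac{I(h)f-f}{h}=\E[\tfrac{f(\psi(h,x,\cdot))-f(x)}{h}]$ and passes to the limit using the convergence $\tfrac{\psi(h,x,y)-x}{h}\to\psi_0(x,y)$ uniformly on compacts, a Taylor expansion of $f$, and the continuity from above of $\E$; this simultaneously yields the Lipschitz-in-$t$ bound required for $\Cbi\subset\L^I$ and the formula $I'(0)f(x)=\E[\nabla f(x)\psi_0(x,\cdot)]$.

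\emph{The two delicate conditions.} Condition~(vii), $I(t)\colon\Lipb(r)\to\Lipb(e^{\omega t}r)=\Lipb(r)$, again rests on Assumption~\ref{ass:psi}(i): the sup-bound $|\psi(t,x,y)-x|\leq L(1+|y|)t$ controls $\|I(t)f\|_\infty$ for $f\in\Lipb(r)$, while the commutator estimate transfers the Lipschitz constant; the growth factor $|\psi|$ in $y$ is absorbed by the finite first moment of $\xi_1$ through $\E$. Condition~(vi), continuity from above of the iterated operators, is the place I expect the main technical obstacle, and the intended route is Remark~\ref{rem:cher}(b): one must exhibit a power weight $\tilde\kappa(x)=(1+|x|)^{-q}$ with $q>1$ and a constant $c\geq0$ satisfying the moment-type contraction $\|I(t)f\|_{\tilde\kappa}\leq e^{ct}\|f\|_{\tilde\kappa}$. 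Establishing this requires estimating how $\E$ acts on the reweighted function $(1+|\psi(t,x,\cdot)|)^{-q}$ relative to $(1+|x|)^{-q}$, which in turn relies on the sublinear growth bound $|\psi(t,x,y)-x|\leq L(1+|y|)t$ together with the uniform integrability $\lim_{c\to\infty}\E[(|\xi_1|-c)^+]=0$ that underlies continuity from above; once the inequality~\eqref{eq:tilde} is verified, Remark~\ref{rem:cher}(b) delivers condition~(vi) automatically. With all seven conditions in hand, the theorem is a direct application of Theorem~\ref{thm:cher}.
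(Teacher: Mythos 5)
Your overall strategy coincides with the paper's: verify Assumption~\ref{ass:cher} for the family $(I(t))_{t\geq 0}$, invoke Theorem~\ref{thm:cher} (with the partition $h_n=\nicefrac{1}{n}$, $t=1$) to get the semigroup, the convergence and properties~(i)--(v), identify the generator via $I'(0)f$, and obtain~\eqref{eq:LLN2} by the induction $(I(\nicefrac{1}{n})^n f)(0)=\frac{1}{n}\bar{\E}[nf(X_n)]$ from independence. However, your treatment of condition~(vi) contains a step that would fail. The theorem produces a semigroup on $\Cb$, and the paper accordingly applies Theorem~\ref{thm:cher} with the \emph{supremum norm} as the ambient weight; the space $\Ck$ with $\kappa(x)=(1+|x|)^{-1}$ enters only as the auxiliary weight of Remark~\ref{rem:cher}(b), i.e., one takes $\tilde{\kappa}=\kappa=(1+|x|)^{-1}$, so that for $\|f\|_{\tilde{\kappa}}\leq 1$ the bound $|f(\psi(t,x,y))|\leq 1+|x|+L(1+|y|)t$ yields $\|I(t)f\|_{\tilde{\kappa}}\leq e^{ct}\|f\|_{\tilde{\kappa}}$ with $c=\E[L(1+|\cdot|)]<\infty$, which is exactly inequality~\eqref{eq:tilde} and requires nothing beyond finite first moments. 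You instead declare $\kappa=(1+|x|)^{-1}$ to be the ambient weight and seek $\tilde{\kappa}(x)=(1+|x|)^{-q}$ with $q>1$. Verifying \eqref{eq:tilde} for such $\tilde{\kappa}$ would force you to apply $\E$ to functions $f(\psi(t,x,\cdot))$ with $|f(y)|\leq(1+|y|)^q$, which grow like $|y|^q$ in the integration variable; these lie outside the domain on which $\E$ is defined and continuous from above, since only $\lim_{c\to\infty}\E[(|\xi_1|-c)^+]=0$ (finite first moments) is available. Note also the internal inconsistency: you check condition~(iii) with respect to $\|\cdot\|_\infty$ but condition~(vi) with respect to the linear weight; Assumption~\ref{ass:cher} must be verified throughout with a single ambient weight, and on $\Cb$ the correct auxiliary choice is $q=1$, not $q>1$.

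A secondary, smaller gap: your limit passage for condition~(v) and the generator formula is understated. The identity $\frac{I(h)f-f}{h}(x)=\E\big[\frac{f(\psi(h,x,\cdot))-f(x)}{h}\big]$ is legitimate (translation by constants), but since $\E$ is merely convex, the difference $\E[Y_h]-\E[Y]$ of the resulting unbounded integrands is not controlled by any norm of $Y_h-Y$; the paper handles this with the scaling estimate of Lemma~\ref{lem:lambda} (writing the difference as $\lambda\E[\lambda^{-1}(\cdots)+\cdots]-\lambda\E[\cdots]$), combined with the tightness estimate~\eqref{eq:tight} from continuity from above and a splitting over balls $B(n)$. Your listed ingredients (locally uniform convergence of the difference quotients of $\psi$, a Taylor expansion, uniform integrability) are the right ones, but the $\lambda$-trick is the mechanism that makes them combinable for a nonlinear expectation, and it is absent from your sketch. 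The remaining items --- conditions~(i)--(iv) and~(vii), $\Lipb\subset\L^I$ (hence $\Cbi\subset\L^I$), and the independence induction for~\eqref{eq:LLN2} --- match the paper's proof.
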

\begin{proof}
 First, we verify Assumption~\ref{ass:cher}(i)-(iv), (vi) and~(vii). Lemma~\ref{lem:E}(ii) implies
 that Assumption~\ref{ass:cher}(i)-(iii) are satisfied with $\omega=0$. For every 
 $t\in (0,1]$, $r\geq 0$, $f\in\Lipb(r)$ and $x,y\in\R^d$, it follows from Lemma~\ref{lem:E}(ii)
 and Assumption~\ref{ass:psi}(i) that
 \begin{align}
  |I(t)(\tau_x f)-\tau_x I(t)f|(x)
  &=t\left|\E\left[\frac{1}{t}f(x+\psi(t,y,\cdot))\right]-\E\left[\frac{1}{t}f(\psi(t,x+y,\cdot))\right]\right| 
   \nonumber \\
  &\leq\|f(x+\psi(t,y,\cdot))-f(\psi(t,x+y,\cdot))\|_\infty \nonumber \\
  &\leq r\|x+\psi(t,y,\cdot)-\psi(t,x+y,\cdot)\|_\infty\leq Lrt|x|. \label{eq:shift}
 \end{align}
 Thus, Assumption~\ref{ass:cher}(iv) is satisfied. For every $f\in\Cb$ with 
 $\|f\|_{\tilde{\kappa}}\leq 1$, $t\in (0,1]$ and $x\in\R^d$, we use Lemma~\ref{lem:E}(iii) and~(v) and 
 Assumption~\ref{ass:psi}(i) to estimate
 \begin{align*}
  |(I(t)f)(x)|
  &\leq t\E\left[\frac{1}{t}|f(\psi(t,x,\cdot))|\frac{{\tilde{\kappa}}(\psi(t,x,\cdot))}{{\tilde{\kappa}}(\psi(t,x,\cdot))}\right] 
  \leq t\|f\|_{\tilde{\kappa}}\E\left[\frac{1}{t}\big(1+|\psi(t,x,\cdot)|\big)\right] \\
  &\leq\|f\|_{\tilde{\kappa}}\big(1+|x|+t\E[\tfrac{L}{{\tilde{\kappa}}}\big|\leq e^{ct}\|f\|_{\tilde{\kappa}}(1+|x|),
 \end{align*}
 where $c:=\E[\nicefrac{L}{{\tilde{\kappa}}}]$. This shows that $\|I(t)f\|_{\tilde{\kappa}}\leq e^{ct}\|f\|_{\tilde{\kappa}}$ 
 and Remark~\ref{rem:cher}(b) yields that Assumption~\ref{ass:cher}(vi) is satisfied. 
 Concerning Assumption~\ref{ass:cher}(vii), we remark that inequality~\eqref{eq:shift} implies 
 \begin{align*}
  |(I(t)f)(x+y)-(I(t)f)(x)|
  &\leq\|\tau_y I(t)f-I(t)(\tau_y f)\|_\infty+\|I(t)(\tau_y f)-I(t)f\|_\infty \\
  &\leq Lrt|y|+\|\tau_y f-f\|_\infty\leq e^{Lt}r|y|
 \end{align*}
 for all $r, t\geq 0$, $f\in\Lipb(r)$ and $x,y\in\R^d$ and therefore 
 $I(t)\colon\Lipb(r)\to\Lipb(e^{Lt}r)$. Moreover, for every $r\geq 0$ and $f\in\Lipb(r)$, 
 it follows from Assumption~\ref{ass:psi}(i) that 
 \[  |f(\psi(t,x,y))-f(x)|\leq r|\psi(t,x,y)-x|\leq Lr(1+|y|)t \]
 for all $r,t\geq 0$, $f\in\Lipb(r)$ and $x,y\in\R^d$. We use Lemma~\ref{lem:E}(v) to obtain
 \[ |(I(t)f-f)(x)|\leq t\E\left[\frac{|f(\psi(t,x,\cdot)-f(x)|}{t}\right]
 	\leq\E[\tfrac{Lr}{\kappa}]t \]
 and therefore $\Lipb\subset\L^I$

 Second, we show that $(I'(0)f)(x)=\E[\nabla f(x)\psi_0(x,\cdot)]$ for all 
 $f\in\Cb^1$ and $x\in\R^d$. For every $h>0$, $f\in\Cb^1$, $x\in\R^d$ and 
 $\lambda\in (0,1]$, we use Lemma~\ref{lem:lambda} to estimate 
 \begin{align*}
  &\left(\frac{I(h)f-f}{h}\right)(x)-\E[\nabla f(x)\psi_0(x,\cdot)] \\
  &=\E\left[\left(\frac{\psi(h,x,\cdot)-x}{h}\right)\int_0^1 \nabla f(x_s)\,\d s\right]
  -\E[\nabla f(x)\psi_0(x,\cdot)] \\
  &\leq\lambda\E\left[\frac{1}{\lambda}\left(\left(\frac{\psi(h,x,\cdot)-x}{h}\right)
  \int_0^1 \nabla f(x_s)\,\d s-\nabla f(x)\psi_0(x,\cdot)\right)
  +\nabla f(x)\psi_0(x,\cdot)\right] \\
  &\quad\; -\lambda\E[\nabla f(x)\psi_0(x,\cdot)], 
 \end{align*}
 where $x_s:=x+s(\psi(t,x,\cdot)-x)$. Furthermore, the convexity of $\E$ implies
 \begin{align*}
  &\lambda\E\left[\frac{1}{\lambda}\left(\left(\frac{\psi(h,x,\cdot)-x}{h}\right)
  \int_0^1 \nabla f(x_s)\,\d s-\nabla f(x)\psi_0(x,\cdot)\right)
  +\nabla f(x)\psi_0(x,\cdot)\right] \\
  &\leq\frac{\lambda}{4}\E\left[\frac{4}{\lambda}\left(\left(\frac{\psi(h,x,\cdot)-x}{h}\right)
  \int_0^1 (\nabla f(x_s)-\nabla f(x))\,\d s\right)\one_{B(n)}(\cdot)\right] \\
  &\quad\; +\frac{\lambda}{4}\E\left[\frac{4}{\lambda}\left(\left(\frac{\psi(h,x,\cdot)-x}{h}\right)
  \int_0^1 (\nabla f(x_s)-\nabla f(x))\,\d s\right)\one_{B(n)^c}(\cdot)\right] \\
  &\quad\; +\frac{\lambda}{4}\E\left[\frac{4}{\lambda}\left(\frac{\psi(h,x,\cdot)-x}{h}-\psi_0(x,\cdot)\right)
  \nabla f(x)\right] 
  +\frac{\lambda}{4}\E[4\nabla f(x)\psi_0(x,\cdot)]
 \end{align*}
 for all $n\in\N$ and $B(n):=B_{\R^d}(n)$. Let $\epsilon>0$ and $K\Subset\R^d$. 
 By Assumption~\ref{ass:psi}, there exists $\lambda\in (0,1]$ with 
 \[ \sup_{x\in\R^d}\lambda\E[4\|\nabla f\|_\infty|\psi_0(x,\cdot)|]
 	\leq\sup_{x\in\R^d}\lambda\E\left[\frac{4L\|\nabla f\|_\infty}{\kappa}\right]
 	\leq\frac{\epsilon}{2}.\]
 Furthermore, due to Assumption~\ref{ass:psi} and inequality~\eqref{eq:tight}, there exist 
 $K_1\Subset\R^d$ and $h_0\in (0,1]$ with
 \begin{align*}
  &\frac{\lambda}{4}\E\left[\frac{4}{\lambda}\left(\frac{\psi(h,x,\cdot)-x}{h}-\psi_0(x,\cdot)\right)
  \nabla f(x)\right] \\
  &\leq\frac{\lambda}{4}\E\left[\frac{4}{\lambda}\left|\frac{\psi(h,x,\cdot)-x}{h}-\psi_0(x,\cdot)\right|
  \cdot\|\nabla f\|_\infty\right] \\
  &\leq\frac{\lambda}{8}\E\left[\frac{8\|\nabla f\|_\infty}{\lambda}
  \left|\frac{\psi(h,x,\cdot)-x}{h}-\psi_0(x,\cdot)\right|\one_{K_1}(\cdot)\right]
  +\frac{\lambda}{8}\E\left[\frac{16L\|\nabla f\|_\infty}{\lambda\kappa}\one_{K_1^c}(\cdot)\right]
  \leq\frac{\epsilon}{8}
 \end{align*}
 for all $x\in K$ and $h\in (0,h_0]$. By Assumption~\ref{ass:psi}(i) and inequality~\eqref{eq:tight}, 
 there exist $n\in\N$ with
 \begin{align*}
  &\frac{\lambda}{4}\E\left[\frac{4}{\lambda}\left(\left(\frac{\psi(h,x,\cdot)-x}{h}\right)
  \int_0^1 (\nabla f(x_s)-\nabla f(x))\,\d s\right)\one_{B(n)^c}(\cdot)\right] \\
  &\leq\frac{\lambda}{4}\E\left[\frac{8L\|\nabla f\|_\infty}{\lambda\kappa}\one_{B(n)^c}(\cdot)\right]
  \leq\frac{\epsilon}{8}.
 \end{align*}
 for all $x\in\R^d$ and $h\in (0,h_0]$. Since $K\Subset\R^d$ is compact, there exists 
 $\delta>0$ with 
 \[ |\nabla f(x+y)-\nabla f(x)|<\frac{\epsilon}{8Ln}
 	\quad\mbox{for all } x\in K \mbox{ and } y\in B(\delta). \]
 Hence, by Assumption~\ref{ass:psi}(i), there exist $h_1\in (0,h_0]$ with 
 \begin{align*}
  \frac{\lambda}{4}\E\left[\frac{4}{\lambda}\left(\left(\frac{\psi(h,x,\cdot)-x}{h}\right)
  \int_0^1 (\nabla f(x_s)-\nabla f(x))\,\d s\right)\one_{B(n)}(\cdot)\right]
  \leq\frac{\lambda}{4}\E\left[\frac{4Ln}{\lambda}\frac{\epsilon}{8Ln}\right]
  =\frac{\epsilon}{8}
 \end{align*}
 for all $x\in K$ and $h\in (0,h_1]$. It follows from the previous estimates that
 \[ \left(\frac{I(h)f-f}{h}\right)(x)-\E[\nabla f(x)\psi_0(x,\cdot)]\leq\epsilon \]
 for all $x\in K$ and $h\in (0,h_1]$. Concerning the lower bound, Lemma~\ref{lem:lambda}
 yields
 \begin{align*}
  &\E\left[\left(\frac{\psi(h,x,\cdot)-x}{h}\right)\int_0^1 \nabla f(x_s)\,\d s\right]
  -\E[\nabla f(x)\psi_0(x,\cdot)] \\
  &\geq -\lambda\E\left[\frac{\nabla f(x)\psi_0(x,\cdot)-y_s}{\lambda}+y_s\right]
  +\lambda\E[y_s]
 \end{align*}
 for all $h>0$, $x\in\R^d$ and $\lambda\in (0,1]$, where 
 \[ y_s:=\left(\frac{\psi(h,x,\cdot)-x}{h}\right)\int_0^1 \nabla f(x_s)\,\d s. \]
 Furthermore, we use the convexity of $\E$ to estimate
 \begin{align*}
  &\lambda\E\left[\frac{\nabla f(x)\psi_0(x,\cdot)-y_s}{\lambda}+y_s\right] \\
  &\leq\frac{\lambda}{4}\E\left[\frac{4}{\lambda}
  \left(\psi_0(x,\cdot)-\frac{\psi(h,x,\cdot)-x}{h}\right)\int_0^1 \nabla f(x_s)\,\d s\right] \\
  &\quad\; +\frac{\lambda}{4}\E\left[\frac{4}{\lambda}
  \left(\psi_0(x,\cdot)\int_0^1 \big(\nabla f(x)-\nabla f(x_s)\big)\,\d s\right)\one_{B(n)}(\cdot)\right] \\
  &\quad\; +\frac{\lambda}{4}\E\left[\frac{4}{\lambda}
  \left(\psi_0(x,\cdot)\int_0^1 \big(\nabla f(x)-\nabla f(x_s)\big)\,\d s\right)\one_{B(n)^c}(\cdot)\right] \\
  &\quad\; +\frac{\lambda}{4}\E\left[4\left(\frac{\psi(h,x,\cdot)-x}{h}\right)
  \int_0^1 \nabla f(x_s)\,\d s\right].
 \end{align*}
 for all $n\in\N$. Analogously to the upper bound, one can estimate the terms on 
 the right-hand side of the previous inequality to obtain
 \[ \left(\frac{I(h)f-f}{h}\right)(x)-\E[\nabla f(x)\psi_0(x,\cdot)]\geq-\epsilon \]
 for all $x\in K$ and sufficiently small $h>0$. This shows 
 \[ \lim_{h\downarrow 0}\sup_{x\in K}
 	\left|\left(\frac{I(h)f-f}{h}\right)(x)-\E[\nabla f(x)\psi_0(x,\cdot)]\right|=0 
 	\quad\mbox{for all } K\Subset\R^d. \]
 We obtain that $I'(0)f\in\Cb$ exists and is given by 
 \[ (I'(0)f)(x)=\E[\nabla f(x)\psi_0(x,\cdot)] \quad\mbox{for all } x\in\R^d. \]
 Now, the first part of the claim follows from Theorem~\ref{thm:cher}. 
 
 Third, we verify equation~\eqref{eq:LLN2}. Choose $t:=1$ and $h_n:=\nicefrac{1}{n}$ 
 for all $n\in\N$. Due to Corollary~\ref{cor:IJ}, the limit in equation~\eqref{eq:LLN}
 does not depend on the choice of the partition. In particular, for every $f\in\Cb$ 
 and $x\in\R^d$, 
 \[ (S(1)f)(x)=\lim_{n\to\infty}(I(\tfrac{1}{n})^n f)(x). \]
 For fixed $n\in\N$ and $h:=\nicefrac{1}{n}$, we show by induction that
 \[ (I(h)^k f)(x)=h\bar{\E}\left[\frac{1}{h}f\big(\psi_k(h,x,\xi_1,\ldots,\xi_k)\big)\right] \]
 for all $f\in\Cb$, $x\in\R^d$ and $k\in\N$. For $k=1$, we have 
 \[ (I(h)f)(x)=h\E\left[\frac{1}{h}f\big(\psi(h,x,\cdot)\big)\right]
 	=h\bar{\E}\left[\frac{1}{h}f\big(\psi_1(h,x,\xi_1)\big)\right]. \]
 For the induction step, we use that $\xi_{k+1}$ is independent of $(\xi_1,\ldots,\xi_k)$
 and has the same distribution as $\xi_1$ to obtain
 \begin{align*}
  (I(h)^{k+1} f)(x) &=(I(h)^k I(h)f)(x)
  =h\bar{\E}\left[\frac{1}{h}(I(h)f)\big(\psi_k(h,x,\xi_1,\ldots,\xi_k)\big)\right] \\
  &=h\bar{\E}\left[\bar{\E}\left[\frac{1}{h}f\big(\psi(h,\psi_k(h,x,y_1,\ldots,y_k),\xi_{k+1})\big)\right]
  \Big|_{(y_1,\ldots,y_k)=(\xi_1,\ldots,\xi_k)}\right] \\
  &=h\bar{\E}\left[\frac{1}{h}f\big(\psi(h,\psi_k(h,x,\xi_1,\ldots,\xi_k),\xi_{k+1})\big)\right] \\
  &=h\bar{\E}\left[\frac{1}{h}f\big(\psi_{k+1}(h,x,\xi_1,\ldots,\xi_k,\xi_{k+1})\big)\right]. \qedhere
 \end{align*}
\end{proof}

The existence of $(\Omega,\H,\bar{\E})$ and $(\xi_n)_{n\in\N}\subset\H^d$
as in the previous theorem is always guaranteed by a nonlinear version of Kolmogorov's 
extension theorem, see Theorem~\ref{thm:kol}. For $\psi(t,x,y):=x+ty$ and a
sublinear expectation $\E[\,\cdot\,]$, one can estimate 
\[ \left(\frac{I(h)f-f}{h}\right)(x)-\E[\nabla f(x)\xi] 
    \leq\E\left[\int_0^1 |\nabla f(x_s)-\nabla f(x)|\cdot |\xi|\,\d s\right] \]
with $\xi:=\id_{\R^d}$ and the computation of the generator simplifies accordingly.
However, the main ideas of the proof and the result remain the same. To give an explicit 
example for the sequence $(\psi_n)_{n\in\N}$, we consider the case that the sample $\xi_{n+1}$ 
is randomly shifted by a nonlinear function depending on the average of the previous samples 
$\xi_1,\ldots,\xi_n$.

\begin{corollary} \label{cor:LLN}
 Let $\psi(t,x,y):=x+\phi(t,x)+ty$ for a function $\phi\colon\R_+\times\R^d\to\R^d$
 such that there exist $L\geq 0$ with $\phi(t,\cdot)\in\Lipb(Lt)$ for all $t\geq 0$. 
 Furthermore, we assume that the limit 
 $\phi_0:=\lim_{h\downarrow 0}\nicefrac{\phi(h,\cdot)}{h}\in\Cb$ exists. Then, 
 there exists a strongly continuous convex monotone semigroup $(S(t))_{t\geq 0}$ 
 on $\Cb$ with
 \[ S(t)f=\lim_{n\to\infty}I\big(\tfrac{t}{n}\big)^n f\in\Cb
 	\quad\mbox{for all } t\geq 0 \mbox{ and } f\in\Cb \]
 which has the properties~(i)-(v) from Theorem~\ref{thm:cher} with $\omega=0$ and 
 the constant $L\geq 0$ from above. For every $f\in\Cb^1$ and $x\in\R^d$, it holds that 
 $f\in D(A)$ and 
 \[ (Af)(x)=\E[\nabla f(x)\xi]+\phi_0(x)\nabla f(x),
 	\quad\mbox{where } \xi:=\id_{\R^d}. \]
 In addition, for every convex expectation space $(\Omega,\H, \bar{\E})$ and iid sequence 
 $(\xi_n)_{n\in\N}\subset\H^d$ with $\bar{\E}[f(\xi_n)]=\E[f]$ for all $f\in\Cb$, 
 \begin{equation} 
  (S(1)f)(0)=\lim_{n\to\infty}\frac{1}{n}\bar{\E}\left[nf(X_n)\right]
  \quad\mbox{for all } f\in\Cb, 
 \end{equation}
 where $X_n:=\psi_n(\frac{1}{n},0,\xi_1,\ldots,\xi_n)$. 
\end{corollary}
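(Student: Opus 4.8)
The plan is to deduce the corollary directly from Theorem~\ref{thm:LLN} by checking that the specific choice $\psi(t,x,y)=x+\phi(t,x)+ty$ satisfies Assumption~\ref{ass:psi}, invoking the theorem, and then simplifying the resulting generator through the identity $\psi_0(x,y)=\phi_0(x)+y$.

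For Assumption~\ref{ass:psi}(i) I would simply compute
\[ x+\psi(t,y,z)-\psi(t,x+y,z)=\phi(t,y)-\phi(t,x+y), \]
so that the $Lt$-Lipschitz continuity of $\phi(t,\cdot)$ yields the bound $Lt|x|$ at once; the second estimate follows from $|\psi(t,x,y)-x|=|\phi(t,x)+ty|\le Lt+t|y|$ together with $\|\phi(t,\cdot)\|_\infty\le Lt$, after replacing $L$ by $L\vee 1$ if necessary. For Assumption~\ref{ass:psi}(ii) the natural candidate is $\psi_0(x,y):=\phi_0(x)+y$, since
\[ \frac{\psi(h,x,y)-x}{h}-\big(\phi_0(x)+y\big)=\frac{\phi(h,x)}{h}-\phi_0(x). \]
Because $\phi(h,\cdot)/h\to\phi_0$ holds in $\Cb$ and mixed-topology convergence forces uniform convergence on compact sets, the required locally uniform limit holds, and $\psi_0$ is continuous as $\phi_0\in\Cb$. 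Joint continuity of $\psi$, demanded by Assumption~\ref{ass:psi}, reduces to that of $\phi$ and is immediate in the model case $\phi(t,x)=t\phi_0(x)$.

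With Assumption~\ref{ass:psi} in force, Theorem~\ref{thm:LLN} immediately furnishes the semigroup $(S(t))_{t\ge 0}$, the Chernoff limit, properties~(i)--(v) of Theorem~\ref{thm:cher} with $\omega=0$, and the limit identity~\eqref{eq:LLN2} for $X_n=\psi_n(\tfrac1n,0,\xi_1,\dots,\xi_n)$, none of which require additional argument. It only remains to rewrite the generator: for $f\in\Cb^1$ the theorem gives $(Af)(x)=\E[\nabla f(x)\,\psi_0(x,\cdot)]$, and inserting $\psi_0(x,y)=\phi_0(x)+y$ turns the integrand into $\nabla f(x)\phi_0(x)+\nabla f(x)\xi$ with $\xi:=\id_{\R^d}$.

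The one genuinely nontrivial step is to extract the deterministic drift and arrive at $(Af)(x)=\E[\nabla f(x)\xi]+\phi_0(x)\nabla f(x)$. Since $\E$ is nonlinear this is not automatic and rests on the translation invariance $\E[g+c]=\E[g]+c$ for deterministic $c$, which I expect to be the main point to pin down. It is a standard property of convex expectations: from the robust representation of $\E$ as a penalized supremum over probability measures, the normalization $\E[c]=c$ forces each dual element to be a probability measure, so shifting by the constant $\nabla f(x)\phi_0(x)$ passes through $\E$ unchanged. Everything else is routine verification or a direct appeal to Theorem~\ref{thm:LLN}.
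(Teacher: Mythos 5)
Your proposal is correct and follows essentially the same route as the paper: the paper's proof likewise consists of verifying Assumption~\ref{ass:psi} with $\psi_0(x,y)=\phi_0(x)+y$ and invoking Theorem~\ref{thm:LLN}, treating everything else (including the rewriting of the generator) as straightforward. The only difference is your duality-based justification for extracting the constant $\nabla f(x)\phi_0(x)$ from $\E$; this is exactly the cash-invariance $\E[X+c]=\E[X]+c$ of Lemma~\ref{lem:E}(i), which the paper proves by an elementary convexity argument, so no robust representation (and hence no extra continuity hypothesis) is needed for that step.
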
 
\begin{proof}
 It is straightforward to show that Assumption~\ref{ass:psi} is satisfied with
 \[ \psi_0(x,y):=\phi_0(x)+y \quad\mbox{for all } x,y\in\R^d. \]
 Hence, the claim follows from Theorem~\ref{thm:LLN}. 
\end{proof}

For non-perturbed averaged sums of iid samples, the semigroup can be represented 
explicitly by the Hopf--Lax formula.

\begin{theorem} \label{thm:LLN2}
 Let $\psi(t,x,y):=x+ty$ for all $t\geq 0$ and $x,y\in\R^d$. Then, the semigroup 
 $(S(t))_{t\geq 0}$ from Theorem~\ref{thm:LLN} has the representation
 \[ (S(t)f)(x)=\sup_{y\in\R^d}\big(f(x+ty)-\phi(y)t\big)
 	\quad\mbox{for all } t\geq 0, f\in\Cb \mbox{ and } x\in\R^d, \]
 where $\phi(y):=\sup_{z\in\R^d}(yz-\E[z\xi])$ and $\xi:=\id_{\R^d}$. 
 For every $f\in\Cb^1$, it holds that
 \[ (Af)(x)=\E[\nabla f(x)\xi] \quad\mbox{for all } x\in\R^d. \]
 In addition, for every convex expectation space $(\Omega,\H, \bar{\E})$ and iid sequence 
 $(\xi_n)_{n\in\N}\subset\H^d$ with $\bar{\E}[f(\xi_n)]=\E[f]$ for all $f\in\Cb$, 
 \[ \lim_{n\to\infty}\frac{1}{n}\bar{\E}\left[nf\left(\frac{1}{n}\sum_{i=1}^n \xi_i\right)\right]
 	=\sup_{x\in\R^d}\big(f(x)-\phi(x)\big) \quad\mbox{for all } f\in\Cb. \]
\end{theorem}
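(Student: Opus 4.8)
The plan is to realise the right-hand side as a genuine convex monotone semigroup $(T(t))_{t\geq 0}$ and then to identify it with $(S(t))_{t\geq 0}$ through the uniqueness of the latter. For $\psi(t,x,y)=x+ty$ one has $\psi_0(x,y)=\lim_{h\downarrow 0}\frac{\psi(h,x,y)-x}{h}=y$, so the formula $(Af)(x)=\E[\nabla f(x)\xi]$ for $f\in\Cb^1$ is already contained in Theorem~\ref{thm:LLN}; it remains to prove the Hopf--Lax representation, from which the final limit follows by evaluating at $t=1$, $x=0$ and invoking equation~\eqref{eq:LLN2}.

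First I would fix the convex-analytic objects. Writing $G(z):=\E[z\xi]$, the monotonicity and convexity of $\E$ together with the finite first moment $m:=\E[|\xi|]$ show that $G$ is convex and finite with $|G(z)|\leq m|z|$; hence $\phi=G^*$ is proper, lower semicontinuous and convex with $\phi\geq 0$, $\inf\phi=0$ (attained), effective domain $\{\phi<\infty\}\subseteq B_{\R^d}(m)$, and $\phi^*=G^{**}=G$. I would then define $(T(t)f)(x):=\sup_{y\in\R^d}\big(f(x+ty)-\phi(y)t\big)$ and check that each $T(t)$ is monotone, that $f\mapsto (T(t)f)(x)$ is convex as a supremum of affine functionals, that $T(t)0=-t\inf\phi=0$, and that $T(t)$ is nonexpansive and translation invariant on $\Cb$ with $T(t)\colon\Lipb(r)\to\Lipb(r)$. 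The only nontrivial algebraic point is the semigroup property $T(s)T(t)=T(s+t)$, where the convexity of $\phi$ is essential: for displacements $su$ and $tv$ one sets $w:=\frac{su+tv}{s+t}$ and uses $(s+t)\phi(w)\leq s\phi(u)+t\phi(v)$ to obtain $T(s)T(t)f\leq T(s+t)f$, while the diagonal choice $u=v=w$ yields the reverse inequality.

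Next I would establish the regularity needed to apply the comparison machinery. Because $\{\phi<\infty\}$ is bounded, the supremum defining $T(t)$ effectively ranges over the compact ball $B_{\R^d}(m)$, and uniform continuity of $f$ on compacts then gives both strong continuity of $(T(t))_{t\geq 0}$ and continuity of each $T(t)$ in the mixed topology; together with translation invariance this shows that $(T(t))_{t\geq 0}$ satisfies Assumption~\ref{ass:Lset} (with $L=0$). For $f\in\Cb^1$, writing $\frac{f(x+hy)-f(x)}{h}=\int_0^1\nabla f(x+shy)y\,\d s$ and using that this converges to $\nabla f(x)y$ uniformly over the compact domain, I would pass to the limit under the supremum to get
\[ \lim_{h\downarrow 0}\frac{(T(h)f-f)(x)}{h}=\sup_{y\in\R^d}\big(\nabla f(x)y-\phi(y)\big)=\phi^*(\nabla f(x))=G(\nabla f(x))=\E[\nabla f(x)\xi], \]
so the generator $B$ of $(T(t))_{t\geq 0}$ satisfies $\Cb^\infty\subseteq D(B)$ and $Bf=Af$ on $\Cb^\infty$.

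Finally, since $(S(t))_{t\geq 0}$ satisfies Assumption~\ref{ass:Lset} by its properties~(i)--(v) in Theorem~\ref{thm:cher} and its generator agrees with $B$ on $\Cb^\infty$, I would apply Theorem~\ref{thm:Lset} in both directions to conclude $\L^S_+\cap\Lipb=\L^T_+\cap\Lipb$; by \cite[Lemma~2.8]{BDKN} this common set is invariant under both semigroups, so the comparison principle \cite[Theorem~4.6]{BDKN} yields $S(t)=T(t)$ for all $t\geq 0$. Evaluating at $t=1$ and $x=0$ gives $(S(1)f)(0)=\sup_{x\in\R^d}(f(x)-\phi(x))$, which combined with equation~\eqref{eq:LLN2} produces the stated limit. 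I expect the main obstacle to be the verification that the Hopf--Lax family is a strongly continuous semigroup meeting every hypothesis of the comparison principle; the boundedness of the effective domain of $\phi$, a consequence of the finite first moment $m=\E[|\xi|]$, is precisely what makes the strong continuity and the interchange of limit and supremum in the generator computation rigorous.
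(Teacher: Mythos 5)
Your overall strategy---realising the Hopf--Lax formula as a semigroup $(T(t))_{t\geq 0}$ and identifying it with $(S(t))_{t\geq 0}$ through equality of the generators on smooth functions plus the comparison machinery---is exactly the paper's, and some details are fine (your proof of the semigroup property via convexity of $\phi$ is even slightly cleaner than the paper's, since the diagonal choice avoids the attainment argument). But there is a genuine error at the cornerstone of all your regularity arguments: the claim $|G(z)|\leq m|z|$ with $m=\E[|\xi|]$, and hence $\{\phi<\infty\}\subseteq B_{\R^d}(m)$. This uses positive homogeneity, which a convex expectation does not have: Lemma~\ref{lem:E}(iii) gives $\E[\lambda X]\leq\lambda\E[X]$ only for $\lambda\in[0,1]$, while convexity forces $\E[\lambda X]\geq\lambda\E[X]$ for $\lambda\geq 1$, so $G(z)=\E[z\xi]$ may grow superlinearly and $\phi=G^*$ may have unbounded---even full---effective domain. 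Concretely, for the entropic expectation $\E[f]=\log(\e[e^{f(\xi_1)}])$, which is precisely the case powering Corollary~\ref{cor:cramer}, one has $G=\Lambda$ and $\phi=\Lambda^*$; for a standard Gaussian $\xi_1$ this gives $\phi(y)=|y|^2/2$, finite everywhere. So everything you base on the compact ball $B_{\R^d}(m)$---that $T(t)f\in\Cb$, strong continuity, continuity in the mixed topology, and the uniform interchange of limit and supremum in the generator computation---is unjustified as written; your closing sentence, which declares the bounded domain to be ``precisely what makes'' the argument rigorous, pinpoints exactly the broken step.

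The repair is the paper's: since $G$ is convex and finite on all of $\R^d$, its conjugate is superlinear, $\lim_{|y|\to\infty}\nicefrac{\phi(y)}{|y|}=\infty$. Then for fixed $f\in\Cb$ the supremum defining $T(t)f$ is attained on a compact set depending on $\|f\|_\infty$ and $t$, and for $f\in\Lipb(r)$ the estimate $f(x+ty)-\phi(y)t\leq f(x)+(r|y|-\phi(y))t$ produces a single compact set valid for all $t\geq 0$ simultaneously (equation~\eqref{eq:cramer} in the paper); this is what yields $\Lipb\subset\L^T$, strong continuity, and the generator formula $Bf=\E[\nabla f(\cdot)\xi]$ uniformly on compacts. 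The $t$-uniformity matters: with $\phi(y)=|y|^2/2$ the sublevel set $\{y\colon\phi(y)t\leq 2\|f\|_\infty\}$ blows up as $t\downarrow 0$, so a compactness argument that is not uniform in $t$ fails for mere $f\in\Cb$. A secondary issue is your identification step: you invoke Theorem~\ref{thm:Lset} together with \cite[Lemma~2.8]{BDKN} and \cite[Theorem~4.6]{BDKN} directly, whereas the paper verifies all of Assumption~\ref{ass:cher} for $(T(t))_{t\geq 0}$---including the continuity-from-above condition~(vi), checked via Dini's theorem using the compact restriction above---and then applies Corollary~\ref{cor:IJ}. Your shortcut still owes the hypotheses of the comparison theorem for $T$ (uniform continuity w.r.t. the mixed topology, invariance of the common upper Lipschitz set), none of which come for free once the bounded-domain argument is removed.
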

\begin{proof}
 For every $t\geq 0$, $f\in\Cb$ and $x\in\R^d$, we define
 \[ (T(t)f)(x):=\sup_{y\in\R^d}\big(f(x+ty)-\phi(y)t\big). \]
 We show that $(T(t))_{t\geq 0}$ is a semigroup satisfying Assumption~\ref{ass:cher}.
 The convexity of the mapping $\R^d\to\R,\; x\mapsto\E[x\xi]$ and Fenchel--Moreau's 
 theorem yield
 \[ \E[x\xi]=\sup_{y\in\R^d}\big(xy-\phi(y)\big) \quad\mbox{for all } x\in\R^d.\]
 In particular, we obtain $\inf_{x\in\R^d}\phi(x)=-\E[0]=0$. 
 For every $t\geq 0$ and $f\in\Cb$, it follows from $\lim_{|x|\to\infty}\phi(x)=\infty$
 that there exists $K\Subset\R^d$ with
 \[ (T(t)f)(x)=\sup_{y\in K}\big(f(x+ty)-\phi(y)t\big) \quad\mbox{for all } x\in\R^d. \]
 Since $f$ is uniformly continuous on compacts, we obtain $T(t)f\in\Cb$. Clearly, 
 Assumption~\ref{ass:cher}(i)-(iv) and~(vii) are satisfied with $\omega=L=0$, where
 $T(t)0=0$ follows from $\inf_{x\in\R^d}\phi(x)=0$. For every $r\geq 0$ and $f\in\Lipb(r)$,
 we use
 \[ f(x+ty)-\phi(y)t\leq f(x)+\big(r|y|-\phi(y)\big)t \]
 and $\lim_{|x|\to\infty}\nicefrac{\phi(x)}{|x|}=\infty$ to choose $K\Subset\R^d$ with 
 \begin{equation} \label{eq:cramer}
  (T(t)f)(x)=\sup_{y\in K}\big(f(x+ty)-\phi(y)t\big) 
  \quad\mbox{for all } t\geq 0 \mbox{ and } x\in\R^d. 
 \end{equation}
 We obtain $0\leq T(t)f-f\leq\sup_{y\in K}rt|y|$ for all $t\geq 0$ which shows 
 $\Lipb\subset\L^T$. Let $f\in\Cb^1$ and choose $K\Subset\R^d$ such that 
 equation~\eqref{eq:cramer} holds. For every $x\in\R^d$, we use Fenchel--Moreau's 
 theorem to estimate
 \begin{align*}
  \left|\frac{(T(t)f-f)(x)}{t}-\E[\nabla f(x)\xi]\right|
  &\leq\sup_{y\in K}\left|\frac{f(x+ty)-f(x)}{t}-\nabla f(x)y\right| \\
  &\leq\sup_{y\in K}\frac{1}{t}\int_0^t |\nabla f(x+sy)-\nabla f(x)|\cdot|y|\,\d s.
 \end{align*}
 Since $\nabla f$ is uniformly continuous on compacts, we obtain $f\in D(B)$ 
 and $Af=Bf$, where $B$ denotes the generator of $(T(t))_{t\geq 0}$. In particular,
 Assumption~\ref{ass:cher}(v) is satisfied. Next, we show that $(T(t))_{t\geq 0}$
 forms a semigroup. For every $s,t\geq 0$, $f\in\Cb$ and $x\in\R^d$, 
 \begin{align*}
  (T(s+t)f)(x)
  &=\sup_{y\in\R^d}\big(f(x+sy+ty)-\phi(y)s-\phi(y)t\big) \\
  &\leq\sup_{y\in\R^d}\sup_{z\in\R^d}\big(f(x+sy+tz)-\phi(y)s-\phi(z)t\big)
  =(T(s)T(t)f)(x).
 \end{align*}
 Furthermore, due to $\lim_{|x|\to\infty}\phi(x)=\infty$, there exists $y_s, y_t\in\R^d$
 with 
 \[ (T(s)T(t)f)(x)=(T(t)f)(x+sy_s)-\phi(y_s)s=f(x+sy_s+ty_t)-\phi(y_s)s-\phi(y_t)t. \]
 For $y_{s+t}:=\frac{s}{s+t}y_s+\frac{t}{s+t}y_t$, it follows from the convexity of
 $\phi$ that
 \[ \phi(y_{s+t})(s+t)=\phi\left(\frac{s}{s+t}y_s+\frac{t}{s+t}y_t\right)(s+t)
 	\leq\phi(y_s)s+t\phi(y_t)t. \]
 We obtain 
 \begin{align*}
  (T(s)T(t)f)(x) &=f(x+(s+t)y_{s+t})-\phi(y_s)s-\phi(y_t)t \\
  &\leq f(x+(s+t)y_{s+t})-\phi(y_{s+t})(s+t)\leq (T(s+t)f)(x). 
 \end{align*}
 It remains to verify Assumption~\ref{ass:cher}(vi). For every $t\geq 0$, $x\in\R^d$
 and $(f_n)_{n\in\N}\subset\Cb$ with $f_n\downarrow 0$, there exists $K\Subset\R^d$ 
 with
 \[ (T(t)f_n)(x)=\sup_{y\in K}\big(f_n(x+ty)-\phi(y)t\big)
 	\quad\mbox{for all } n\in\N \]
 such that Dini's theorem implies $(T(t)f_n)(x)\downarrow 0$ as $n\to\infty$. 
 Since the mapping 
 \[ \R_+\times\R^d\to\R,\; (t,x)\mapsto (T(t)f_n)(x) \]
 is continuous for all $n\in\N$, we can use Dini's theorem again to obtain
 \[ \sup_{(s,x)\in [0,t]\times K}(T(s)f)(x)\downarrow 0 
 	\quad\mbox{for all } t\geq 0 \mbox{ and } K\Subset\R^d. \]
 The previous statement remains valid for sequences $(f_n)_{n\in\N}\subset\Cb$
 because every function in $\Cb$ can be approximated from above by a decreasing
 sequence in $\Lipb$. Finally, we can apply Corollary~\ref{cor:IJ} to obtain $S(t)f=T(t)f$ 
 for all $t\geq 0$ and $f\in\Cb$. Furthermore, 
 \[ \psi_n(t,x,y_1,\ldots,y_n)=x+t\sum_{i=1}^n y_i \]
 for all $n\in\N$, $t\geq 0$ and $x,y_1,\ldots,y_n\in\R^d$. Thus, the second part of
 the claim follows from Theorem~\ref{thm:LLN}. 
\end{proof}

The previous theorem extends Peng's results from the sublinear to the convex case. 
Indeed, if $\E[\,\cdot\,]$ is a sublinear expectation, it holds that
\[ \lim_{n\to\infty}\bar{\E}\left[f\left(\frac{1}{n}\sum_{i=1}^n \xi_i\right)\right]
	=\sup_{\{x\in\R^d\colon\phi(x)=0\}}f(x) \quad\mbox{for all } f\in\Cb. \]
Hence, we obtain convergence to a maximal distribution as in~\cite{Peng08b, Peng19} 
and the limit in Theorem~\ref{thm:LLN2} can be seen as a convex version thereof. 
Furthermore, as an immediate consequence of the previous result, we obtain Cram\'er's 
theorem as LLN for the entropic risk measure. Let $(\Omega,\F,\P)$ be a probability 
space and $(\xi_n)_{n\in\N}$ be an iid sequence of random vectors 
$\xi_n\colon\Omega\to\R^d$ with $\e[e^{c|\xi_1|}]<\infty$ for all $c\geq 0$. Let
\[ \Lambda\colon\R^d\to\R,\; x\mapsto\log\big(\e[e^{x\xi_1}]\big) \]
be the logarithmic moment generating function and denote by
\[ \Lambda^*\colon\R^d\to\R,\; x\mapsto\sup_{y\in\R^d}\big(xy-\Lambda(y)\big) \]
its convex conjugate. Moreover, we define $X_n:=\frac{1}{n}\sum_{i=1}^n \xi_i$ for all $n\in\N$.

\begin{corollary}[Cram\'er] \label{cor:cramer}
 For every $f\in\Cb$, 
 \[ \lim_{n\to\infty}\frac{1}{n}\log\big(\e\big[e^{nf(X_n)}\big]\big)
 	=\sup_{x\in\R^d}\big(f(x)-\Lambda^*(x)\big). \]
\end{corollary}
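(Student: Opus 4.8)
The plan is to realize Cram\'er's theorem as the law of large numbers in Theorem~\ref{thm:LLN2} applied to the \emph{entropic risk measure}. On the given probability space $(\Omega,\F,\P)$ I would define, for random variables $Y$ with $\e[e^Y]<\infty$,
\[ \bar{\E}[Y]:=\log\big(\e[e^Y]\big), \]
which is constant-preserving and monotone, and convex by H\"older's inequality applied to $\e[(e^Y)^\lambda(e^Z)^{1-\lambda}]$. The exponential moment assumption $\e[e^{c|\xi_1|}]<\infty$ ensures $\bar{\E}[f(\xi_1)]$ is finite for every $f\in\Ck$, since $|f(x)|\leq\|f\|_\kappa(1+|x|)$ forces $e^{f(\xi_1)}\leq e^{\|f\|_\kappa(1+|\xi_1|)}$, so that
\[ \E[f]:=\bar{\E}[f(\xi_1)]=\log\big(\e[e^{f(\xi_1)}]\big) \]
is a well-defined convex expectation on $\Ck$.

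Next I would verify the two structural hypotheses required by Theorem~\ref{thm:LLN2}. For continuity from above, if $f_n\downarrow 0$ in $\Ck$ then $f_n\geq 0$ and $e^{f_n(\xi_1)}\downarrow 1$ with integrable dominating function $e^{f_1(\xi_1)}$, so dominated convergence gives $\E[f_n]\downarrow 0$; equivalently the moment condition $\bar{\E}[(|\xi_1|-c)^+]=\log\e[e^{(|\xi_1|-c)^+}]\to 0$ as $c\to\infty$ holds, again by dominated convergence with $(|\xi_1|-c)^+\leq|\xi_1|$. For the nonlinear independence and identical distribution needed in Theorem~\ref{thm:LLN}, I would check that these follow from classical independence under $\P$: setting $g(y):=\bar{\E}[F(y,\xi_{n+1})]=\log\e[e^{F(y,\xi_{n+1})}]$, so that $e^{g(y)}=\e[e^{F(y,\xi_{n+1})}]$, the tower property together with independence yields
\[ \bar{\E}\big[g(\xi_1,\ldots,\xi_n)\big]=\log\e\big[e^{F(\xi_1,\ldots,\xi_n,\xi_{n+1})}\big]=\bar{\E}\big[F(\xi_1,\ldots,\xi_{n+1})\big], \]
which is precisely the recursive identity used in the proof of Theorem~\ref{thm:LLN}; identical distribution is immediate from $\e[e^{f(\xi_n)}]=\e[e^{f(\xi_1)}]$.

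The decisive computation is to identify the conjugate $\phi$ from Theorem~\ref{thm:LLN2} with the rate function $\Lambda^*$. With $\xi=\id_{\R^d}$ one has $\E[z\xi]=\bar{\E}[\langle z,\xi_1\rangle]=\log\e[e^{\langle z,\xi_1\rangle}]=\Lambda(z)$, and therefore
\[ \phi(y)=\sup_{z\in\R^d}\big(\langle y,z\rangle-\E[z\xi]\big)=\sup_{z\in\R^d}\big(\langle y,z\rangle-\Lambda(z)\big)=\Lambda^*(y). \]
Since the choice $\psi(t,x,y)=x+ty$ gives $X_n=\frac{1}{n}\sum_{i=1}^n\xi_i$ and $\frac{1}{n}\bar{\E}[nf(X_n)]=\frac{1}{n}\log\e[e^{nf(X_n)}]$, Theorem~\ref{thm:LLN2} delivers exactly $\lim_{n\to\infty}\frac{1}{n}\log\e[e^{nf(X_n)}]=\sup_{x\in\R^d}(f(x)-\Lambda^*(x))$, which is the claim. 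The main obstacle here is conceptual rather than analytic: recognizing that the entropic risk measure is the convex expectation whose associated conjugate is precisely the Cram\'er rate function, and confirming that ordinary independence transfers to the nonlinear independence demanded by the semigroup framework. Once these identifications are made, the corollary is an immediate specialization of Theorem~\ref{thm:LLN2}.
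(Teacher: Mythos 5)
Your proposal is correct and follows essentially the same route as the paper: both realize Cram\'er's theorem by applying Theorem~\ref{thm:LLN2} to the entropic convex expectation $\E[f]=\log(\e[e^{f(\xi_1)}])$, verifying convexity, continuity from above, and the identification $\E[z\xi]=\Lambda(z)$ so that $\phi=\Lambda^*$. Your write-up is in fact somewhat more careful than the paper's three-line proof, notably in checking explicitly that classical independence under $\P$ yields Peng-type independence for the entropic expectation via the tower property, a point the paper leaves implicit.
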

\begin{proof}
 Clearly, the functional $\E\colon {\rm C}_{\tilde{\kappa}}\to\R,\; f\mapsto\log(\e[e^{f(\xi_1)}])$ 
 is convex and monotone with $\E[c]=c$ for all $c\in\R$. In addition, the dominated convergence 
 theorem implies $\lim_{n\to\infty}\E[|\cdot|\one_{B(n)^c}]=0$ which shows that $\E$ 
 is continuous from above. Hence, we can apply Theorem~\ref{thm:LLN2} to obtain the claim. 
\end{proof}

\subsection{Upper bounds and convergence rates}
\label{sec:bounds}

Let $\E\colon {\rm C}_{\tilde{\kappa}}\to\R$ be a convex expectation which is continuous from above
and $\psi\colon\R_+\times\R^d\times\R^d\to\R^d$ be a function satisfying 
Assumption~\ref{ass:psi}. For $\psi(t,x,y):=x+ty$, the semigroup that we obtain in the 
limit can explicitly be represented by the Hopf--Lax formula, see Theorem~\ref{thm:LLN2}.
In general, such a formula is not available but we can still provide explicit (upper) bounds 
and resulting convergence rates. Let $H_\pm\colon\R^d\to\R$ be convex functions with 
\[ H_-(y)\leq\E[\psi_0(x,\cdot)y]\leq H_+(y)
	\quad\mbox{for all } x,y\in\R^d \]
and $H_\pm(0)=0$. Note that at least an upper bound always exists, since we can 
choose 
\[ H_+(y):=\sup_{x\in\R^d}\E[\psi_0(x,\cdot)y]\leq\E[L(1+|\cdot|)y]. \]
Moreover, under the assumptions of Corollary~\ref{cor:LLN} we can choose 
\[ H_+(y):=\E[y\xi]+\sup_{x\in\R^d}\phi_0(x)y\leq\E[y\xi]+L|y|, 
	\quad\mbox{where } \xi:=\id_{\R^d}. \]
For the following lemma, let $(S(t))_{t\geq 0}$ be the semigroup from Theorem~\ref{thm:LLN} 
given by
\[ S(t)f=\lim_{n\to\infty}I(\pi_n^t)f \quad\mbox{for all } t\geq 0 \mbox{ and } f\in\Cb, \]
where $I(0):=\id_{\Cb}$ and, for every $t>0$, $f\in\Cb$ and $x\in\R^d$, 
\[ (I(t)f)(x):=t\E\left[\frac{1}{t}f(\psi(t,x,\cdot))\right]. \]

\begin{lemma} \label{lem:pm}
 It holds that $S_-(t)f\leq S(t)f\leq S_+(t)f$ for all $t\geq 0$ and $f\in\Cb$, where
 \[ (S_\pm(t)f)(x):=\sup_{y\in\R^d}\big(f(x+ty)-H_\pm^*(y)t\big) \]
 and $H_\pm^*(y):=\sup_{z\in\R^d}(yz-H_\pm(z))$. 
\end{lemma}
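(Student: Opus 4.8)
The plan is to realize $S_\pm$ as the limit semigroups of their own generating families and then compare generators via Corollary~\ref{cor:IJ}. First I would set $I_\pm(t):=S_\pm(t)$ and observe that these Hopf--Lax operators already form semigroups. Since $H_\pm\colon\R^d\to\R$ is convex with $H_\pm(0)=0$, Fenchel--Moreau's theorem gives $(H_\pm^*)^*=H_\pm$, whence $H_\pm^*\geq 0$ and $\inf_{y\in\R^d}H_\pm^*(y)=0$; moreover $H_\pm^*$ is convex, coercive and superlinear, the latter two properties holding because $H_\pm$ is finite on all of $\R^d$. With $H_\pm^*$ playing the role of the weight $\phi$, these are exactly the properties exploited in the proof of Theorem~\ref{thm:LLN2}, so the same argument shows that $(S_\pm(t))_{t\geq 0}$ are strongly continuous convex monotone semigroups satisfying Assumption~\ref{ass:cher} with $\omega=L=0$, translation invariance yielding condition~(iv). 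As $S_\pm$ are themselves semigroups, one has $I_\pm(\pi_n^t)=S_\pm(t)$ for every $n$, so each $S_\pm$ coincides with the Chernoff limit of its own generating family and is covered by Theorem~\ref{thm:cher}.

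Next I would compute the generators. For $f\in\Cb^1$ and $x\in\R^d$, localizing the supremum to a compact set as in Theorem~\ref{thm:LLN2} and using Fenchel--Moreau once more gives
\[ (B_\pm f)(x)=\lim_{t\downarrow 0}\frac{(S_\pm(t)f-f)(x)}{t}=\sup_{y\in\R^d}\big(\nabla f(x)y-H_\pm^*(y)\big)=(H_\pm^*)^*(\nabla f(x))=H_\pm(\nabla f(x)), \]
so that $\Cbi\subset D(B_\pm)$ with $B_\pm f=H_\pm(\nabla f(\cdot))$. On the other hand, Theorem~\ref{thm:LLN} provides the generator $A$ of $S$ with $(Af)(x)=\E[\nabla f(x)\psi_0(x,\cdot)]$ on $\Cb^1$.

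The comparison of generators is then immediate: for $f\in\Cbi$ and $x\in\R^d$, inserting $y=\nabla f(x)$ into the defining inequality $H_-(y)\leq\E[\psi_0(x,\cdot)y]\leq H_+(y)$ yields
\[ (B_-f)(x)=H_-(\nabla f(x))\leq\E[\nabla f(x)\psi_0(x,\cdot)]=(Af)(x)\leq H_+(\nabla f(x))=(B_+f)(x). \]
Hence $B_-f\leq Af\leq B_+f$ for all $f\in\Cbi$. Applying Corollary~\ref{cor:IJ} to the pair $(I_-,I)$, whose derivatives at zero satisfy $B_-\leq A$, gives $S_-(t)f\leq S(t)f$; applying it to the pair $(I,I_+)$, with $A\leq B_+$, gives $S(t)f\leq S_+(t)f$, for all $t\geq 0$ and $f\in\Cb$, which is the claim.

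The only genuinely technical point is re-deriving Assumption~\ref{ass:cher} for the Hopf--Lax semigroups $S_\pm$, and I expect this to be the main obstacle. However, since the weight $H_\pm^*$ enjoys the same convexity, coercivity and superlinearity that were used for $\phi$ in Theorem~\ref{thm:LLN2}, that verification is a verbatim repetition of the earlier argument; everything else reduces to the one-line generator comparison above and two invocations of the comparison principle in Corollary~\ref{cor:IJ}.
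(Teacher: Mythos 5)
Your proposal is correct and follows essentially the same route as the paper: the paper's proof likewise observes that, as in the proof of Theorem~\ref{thm:LLN2}, the Hopf--Lax families $(S_\pm(t))_{t\geq 0}$ are semigroups satisfying Assumption~\ref{ass:cher} with generators $(A_\pm f)(x)=H_\pm(\nabla f(x))$, notes $A_-f\leq Af\leq A_+f$, and concludes by Corollary~\ref{cor:IJ}. Your write-up merely makes explicit the details the paper leaves implicit (Fenchel--Moreau giving $\inf H_\pm^*=0$, cofiniteness of $H_\pm^*$ from finiteness of $H_\pm$, and the two separate invocations of Corollary~\ref{cor:IJ}), all of which are accurate.
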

\begin{proof}
 As seen during the proof of Theorem~\ref{thm:LLN2}, the families $(S_\pm(t))_{t\geq 0}$
 are semigroups satisfying Assumption~\ref{ass:cher} with generators
 \[ (A_\pm f)(x)=H_\pm(\nabla f(x)) 
 	\quad\mbox{for all } f\in\Cb^1 \mbox{ and } x\in\R^d. \]
 In particular, it holds that $A_- f\leq Af\leq A_+ f$ for all $f\in\Cb^1$ and Corollary~\ref{cor:IJ} 
 implies
 \[ S_-(t)f\leq S(t)f\leq S_+(t)f
 	\quad\mbox{for all } t\geq 0 \mbox{ and } f\in\Cb. \qedhere \]
\end{proof}

We illustrate the previous estimate for the entropic risk measure which yields exponential 
convergence rates in the situation of Corollary~\ref{cor:LLN}, where we considered averaged
sums of perturbed iid samples. Recall that the perturbation of the $\xi_{n+1}$ consisted 
of a random shift by a nonlinear depending on the average of the previous samples $\xi_1,\ldots,\xi_n$. 
It turns out that the well-known convergence rate from the case of unperturbed iid samples 
is reduced according to the size of the shift, see Theorem~\ref{thm:cramer}. Furthermore, 
if we only require that $(\xi_n)_{n\in\N}$ has finite $p$-th moments instead of finite 
exponential moments, we still obtain polynomial convergence rates as in~\cite{Lacker},
see Theorem~\ref{thm:cramer2}. For the following two theorems, let $(\Omega,\F,\P)$ 
be a probability space and $(\xi_n)_{n\in\N}$ be an iid sequence of random vectors 
$\xi_n\colon\Omega\to\R^d$. Furthermore, let $\phi\colon\R_+\times\R^d\to\R^d$ be 
a function such that there exist $L\geq 0$ with $\phi(t,\cdot)\in\Lipb(Lt)$ for all $t\geq 0$
and suppose that the limit $\phi_0:=\lim_{h\downarrow 0}\nicefrac{\phi(h,\cdot)}{h}\in\Cb$ 
exists. We define $\psi(t,x,y):=x+\phi(t,x)+ty$ and recursively a sequence $(\psi_n)_{n\in\N}$ 
of functions $\psi_n\colon\R_+\times\R^d\times (\R^d)^n\to\R^d$ by $\psi_1:=\psi$ and
\[ \psi_{n+1}(t,x,y_1,\ldots,y_{n+1}):=\psi\big(t,\psi_n(t,x,y_1,\ldots,y_n),y_{n+1}\big). \]
Finally, let $X_n:=\psi_n(\frac{1}{n}, 0, \xi_1,\ldots,\xi_n)$ for all $n\in\N$.

\begin{theorem} \label{thm:cramer}
 Assume that $\e[e^{c|\xi_1|}]<\infty$ for all $c\geq 0$. Denote by
 \[ \Lambda\colon\R^d\to\R,\; x\mapsto\log\big(\e[e^{x\xi_1}]\big) \]
 the logarithmic moment generating function and by
 \[ \Lambda^*\colon\R^d\to\R,\; x\mapsto\sup_{y\in\R^d}\big(xy-\Lambda(y)\big) \]
 its convex conjugate. Then, 
 \[ \lim_{n\to\infty}\frac{1}{n}\log\big(\e[e^{nf(X_n)}]\big)
 	 \leq\sup_{x\in\R^d}\big(f(x)-H_+^*(x)\big)
 	 \quad\mbox{for all } f\in\Cb, \]
 where $H_+^*(x):=\inf_{y\in B(L)}\Lambda^*(x+y)$. Furthermore,
 \[ \limsup_{n\to\infty}\frac{1}{n}\log(\P(X_n\in A))\leq -\inf_{x\in A_L}\Lambda^*(x) \]
 for all closed sets $A\subset\R^d$, where $A_L:=\{x+y\colon x\in A, \, y\in B(L)\}$. 
\end{theorem}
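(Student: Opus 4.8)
The plan is to specialize the general machinery to the \emph{entropic risk measure} and then combine the upper bound of Lemma~\ref{lem:pm} with the classical Cram\'er theorem. First I would set $\E\colon\Ck\to\R,\; g\mapsto\log(\e[e^{g(\xi_1)}])$. Exactly as in the proof of Corollary~\ref{cor:cramer}, this is a convex expectation which is continuous from above; here the finiteness and the dominated convergence needed for continuity from above are both guaranteed by $\e[e^{c|\xi_1|}]<\infty$ for all $c\geq 0$. Since $\phi(t,\cdot)\in\Lipb(Lt)$ and $\phi_0=\lim_{h\downarrow 0}\nicefrac{\phi(h,\cdot)}{h}\in\Cb$, we are precisely in the setting of Corollary~\ref{cor:LLN}. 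Taking $\bar{\E}:=\log(\e[e^{\,\cdot\,}])$ on $(\Omega,\F,\P)$ as the associated convex expectation space, the classically independent $\xi_i$ are iid with respect to $\bar{\E}$ (this is where the factorization of $\e[e^{\,\cdot\,}]$ over independent blocks enters) and $\bar{\E}[f(\xi_n)]=\E[f]$, so equation~\eqref{eq:LLN2} applies and yields that the limit exists with
\[ \lim_{n\to\infty}\frac{1}{n}\log\big(\e[e^{nf(X_n)}]\big)=\lim_{n\to\infty}\frac{1}{n}\bar{\E}[nf(X_n)]=(S(1)f)(0). \]

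Next I would identify $H_+$ and compute its conjugate. In the situation of Corollary~\ref{cor:LLN} we have $\psi_0(x,y)=\phi_0(x)+y$, hence $\E[\psi_0(x,\cdot)y]=\phi_0(x)y+\E[y\xi]=\phi_0(x)y+\Lambda(y)$. Because $\phi(t,\cdot)\in\Lipb(Lt)$ forces $\|\phi_0\|_\infty\leq L$, the convex function $H_+(y):=\Lambda(y)+L|y|$ is a majorant of $\E[\psi_0(x,\cdot)y]$ for all $x,y$ with $H_+(0)=0$, exactly the choice suggested before Lemma~\ref{lem:pm}. Its conjugate is obtained from the infimal-convolution formula for the conjugate of a sum of finite convex functions, noting that $z\mapsto\sup_y(zy-L|y|)$ equals $0$ on $B(L)$ and $+\infty$ otherwise:
\[ H_+^*(x)=\sup_y\big(xy-\Lambda(y)-L|y|\big)=\inf_{z\colon|x-z|\leq L}\Lambda^*(z)=\inf_{y\in B(L)}\Lambda^*(x+y). \]
The interchange is exact since both $\Lambda$ and $L|\cdot|$ are finite on all of $\R^d$. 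Lemma~\ref{lem:pm} then gives $(S(1)f)(0)\leq(S_+(1)f)(0)=\sup_y\big(f(y)-H_+^*(y)\big)$, which is the first assertion.

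For the closed-set bound I would exploit the deterministic structure of $X_n$. Unrolling the recursion for $\psi_n$ gives $X_n=M_n+R_n$, where $M_n:=\frac{1}{n}\sum_{i=1}^n\xi_i$ is the unperturbed empirical mean and $R_n:=\sum_{k=1}^n\phi\big(\frac{1}{n},\psi_{k-1}\big)$ collects the perturbations, with intermediate points $\psi_{k-1}:=\psi_{k-1}(\frac{1}{n},0,\xi_1,\ldots,\xi_{k-1})$. Since $\|\phi(\frac{1}{n},\cdot)\|_\infty\leq\frac{L}{n}$, this yields the uniform pointwise bound $|R_n|\leq L$ on $\Omega$. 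Consequently $\{X_n\in A\}\subset\{M_n\in A+B(L)\}=\{M_n\in A_L\}$, and $A_L=A+B(L)$ is closed as the sum of a closed and a compact set, so $\P(X_n\in A)\leq\P(M_n\in A_L)$. Classical Cram\'er's theorem for the iid mean $M_n$, whose rate function $\Lambda^*$ is good because $\e[e^{c|\xi_1|}]<\infty$ for all $c\geq 0$, then gives $\limsup_{n\to\infty}\frac{1}{n}\log\P(M_n\in A_L)\leq-\inf_{x\in A_L}\Lambda^*(x)$, which is the second assertion. This is consistent with the first part, as $\inf_{x\in A}H_+^*(x)=\inf_{x\in A_L}\Lambda^*(x)$.

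The routine parts are the convex-analytic conjugate identity and the telescoping estimate. Two points need care: verifying that the product entropic expectation genuinely realizes the iid nonlinear expectation, so that equation~\eqref{eq:LLN2} is applicable, and the attainment in the infimal convolution. The step I expect to be decisive, and the true origin of the $B(L)$-enlargement, is the \emph{deterministic} bound $|R_n|\leq L$: it relies on $\phi(t,\cdot)$ being bounded by $Lt$ in the supremum norm rather than merely Lipschitz, and it is exactly this uniform control that lets the perturbed quantity $X_n$ be dominated by the unperturbed mean $M_n$ and thereby reduces the large-deviation upper bound to the classical case.
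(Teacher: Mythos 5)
Your proof is correct, and the first assertion follows the paper's own route essentially verbatim: entropic choice $\E[g]=\log(\e[e^{g(\xi_1)}])$, Corollary~\ref{cor:LLN}, the upper bound $H_+(y)=\Lambda(y)+L|y|$ with Lemma~\ref{lem:pm}, and the infimal-convolution identity $H_+^*(x)=\inf_{y\in B(L)}\Lambda^*(x+y)$ (the paper gets it via Fenchel--Moreau by checking $f^*=H_+$ for $f(x)=\inf_{x=y+z}(\Lambda^*(y)+\infty\one_{B(L)^c}(z))$, you via the conjugate-of-a-sum formula; both are exact here since $\Lambda$ and $L|\cdot|$ are finite convex on $\R^d$). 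Your two flagged technical points are also fine and match the paper's implicit usage: the entropic expectation on the classical product space realizes nonlinear iid because $\log\e[e^{g(X,Y)}]=\log\e\big[\exp\big(\log\e[e^{g(x,Y)}]\big|_{x=X}\big)\big]$ by the tower property, and Lemma~\ref{lem:pm} is applied one-sidedly with only $H_+$ (a finite convex $H_-$ with $H_-(0)=0$ need not even exist, e.g.\ for bounded $\xi_1$, but the upper half only needs $Af\leq A_+f$ and Corollary~\ref{cor:IJ}, which is exactly how the paper uses it). Where you genuinely diverge is the second assertion. The paper stays inside its functional framework: it approximates $f=-\infty\one_{A^c}\in\Ub$ from above by $(f_k)_{k\in\N}\subset\Cb$, shows the functionals $\Phi(g)=\sup_{x}(g(x)-H_+^*(x))$ and $\Phi_n(g)=\frac{1}{n}\log(\e[e^{ng(X_n)}])$ are continuous from above on $\Ub$ (Dini plus coercivity of $H_+^*$), and interchanges a supremum with an infimum at the cost of an inequality, thereby deducing the set bound from the first part. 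You instead prove the pathwise sandwich $X_n=M_n+R_n$ with $|R_n|\leq L$ -- which is valid precisely because $\phi(t,\cdot)\in\Lipb(Lt)$ bounds $\|\phi(t,\cdot)\|_\infty\leq Lt$ and not merely the Lipschitz constant, so $R_n=\sum_{k=1}^n\phi(\tfrac{1}{n},\psi_{k-1})$ telescopes to a deterministic $B(L)$-bound -- whence $\{X_n\in A\}\subset\{M_n\in A_L\}$ with $A_L$ closed, and you invoke the classical Cram\'er upper bound for the unperturbed mean (no circularity, since classical Cram\'er is established independently of this paper; within the paper it is instead a corollary of the framework). Your route is more elementary and makes the origin of the $B(L)$-enlargement completely transparent; what the paper's argument buys is self-containedness and reusability, since the same continuity-from-above/interchange scheme is recycled in Theorem~\ref{thm:cramer2} for the shortfall risk measure, where no classical exponential LDP is available and your reduction would require a separate non-exponential concentration input for $M_n$.
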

\begin{proof}
 Applying Corollary~\ref{cor:LLN} with $\E[f]:=\log(\e[e^{f(\xi_1)}])$ yields a semigroup 
 $(S(t))_{t\geq 0}$ with generator $(Af)(x)=\Lambda(\nabla f(x))+\phi_0(x)\nabla f(x)$
 for all $f\in\Cb^1$ and $x\in\R^d$. Moreover, for every $t\geq 0$, $f\in\Cb$ and $x\in\R^d$, 
 we define $H_+(x):=\Lambda(x)+L|x|$ and 
 \[ (S_+(t)f)(x):=\sup_{y\in\R^d}\big(f(x+ty)-H_+^*(y)t\big), \]
 where $H_+^*(y):=\sup_{z\in\R^d}(yz-H_+(z))$. Lemma~\ref{lem:pm} implies $S(t)f\leq S_+(t)f$ 
 for all $t\geq 0$ and $f\in\Cb$. Moreover, by Fenchel--Moreau's theorem, the function
 \[ f\colon\R^d\to\R,\; x\mapsto\inf_{x=y+z}\big(\Lambda^*(y)+\infty\one_{B(L)^c}(z)\big) \]
 satisfies $f^*(x)=\Lambda(x)+L|x|=H_+(x)$ and thus
 \[ H_+^*(x)=f(x)=\inf_{y\in B(L)}\Lambda^*(x+y). \]
 Let $f:=-\infty\one_{A^c}\in\Ub$ for a closed subset $A\subset\R^d$ and 
 $(f_n)_{n\in\N}\subset\Cb$ be a sequence with $f_n\downarrow f$, where $\Ub$
 consists of all upper semicontinuous functions $g\colon\R^d\to\Rb$ with 
 $\|g^+\|_\infty<\infty$. Due to Dini's theorem and 
 $\lim_{|x|\to\infty}H_+^*(x)=\infty$, the functionals
 \begin{align*}
  \Phi &\colon\Ub\to\Rb,\; g\mapsto\sup_{x\in\R^d}\big(f(x)-H_+^*(x)\big), \\
  \Phi_n &\colon\Ub\to\Rb,\; g\mapsto\frac{1}{n}\log\big(\e\big[e^{nf(X_n)}\big]\big)
  \quad\mbox{for all } n\in\N
 \end{align*}
 are continuous from above. Hence, by changing a supremum with an infimum at the
 cost of an inequality, we obtain
 \begin{align*}
  \limsup_{n\to\infty}\frac{1}{n}\log(\P(X_n\in A))
  &=\limsup_{n\to\infty}\Phi_n(f) 
  =\limsup_{n\to\infty}\inf_{k\in\N}\Phi_n(f_k) \\
  &\leq\inf_{k\in\N}\inf_{n\in\N}\sup_{l\geq n}\Phi_l(f_k) 
  =\inf_{k\in\N}\limsup_{n\to\infty}\Phi_n(f_k) \\
  &\leq\inf_{k\in\N}\Phi(f_k)=\Phi(f)=-\inf_{x\in A_L}\Lambda^*(x).  \qedhere
 \end{align*}
\end{proof}

Replacing the entropic risk measure by the short fall risk measure leads to polynomial
rather than exponential convergence rates.

\begin{theorem} \label{thm:cramer2}
 Assume that $\e[|\xi_1|^p]<\infty$ for some $p\in (1,\infty)$. Define
 \[ \Lambda\colon\R^d\to\R,\; x\mapsto\inf\{m\in\R\colon\e[((1+x\xi_1-m)^+)^p]\leq 1\} \]
 and the convex conjugate
 \[ \Lambda^*\colon\R^d\to\R,\; x\mapsto\sup_{y\in\R^d}(xy-\Lambda(y)). \]
 Then, for every closed subset $A\subset\R^d$, 
 \[ \limsup_{n\to\infty}n^{p-1}\P(X_n\in A)\leq\left(\inf_{x\in A_L}\Lambda^*(x)\right)^{-p}, \]
 where $A_L:=\{x+y\colon x\in A, \, y\in B(L)\}$. 
\end{theorem}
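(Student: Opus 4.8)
The plan is to follow the proof of Theorem~\ref{thm:cramer} step by step, replacing the entropic risk measure by the utility-based shortfall risk measure associated with the loss function $z\mapsto((1+z)^+)^p$ and replacing the exact identity for the iterated expectation by a one-sided polynomial estimate taken from~\cite{Lacker}.

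First I would introduce the convex expectation
\[ \E\colon\Ck\to\R,\quad f\mapsto\inf\big\{m\in\R\colon\e[((1+f(\xi_1)-m)^+)^p]\leq 1\big\}, \]
and check that it is monotone, convex and normalised ($\E[c]=c$), with $\Lambda(y)=\E[\,y\,\cdot\,]$ for the linear function $y\mapsto y\xi_1$. Since every $f\in\Ck$ grows at most linearly and $\e[|\xi_1|^p]<\infty$, the loss $((1+f(\xi_1)-m)^+)^p$ is dominated by a multiple of $(1+|\xi_1|)^p\in L^1(\P)$; dominated convergence then shows that $\E$ is continuous from above and that $\Lambda$ is real-valued and convex, so that $\Lambda^*$ is coercive. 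This is the only place where the moment assumption $\e[|\xi_1|^p]<\infty$ enters. Applying Corollary~\ref{cor:LLN} to this $\E$ and to $\psi(t,x,y)=x+\phi(t,x)+ty$ yields the semigroup $(S(t))_{t\geq 0}$ with generator $(Af)(x)=\Lambda(\nabla f(x))+\phi_0(x)\nabla f(x)$. Because $\phi(t,\cdot)\in\Lipb(Lt)$ forces $\|\phi_0\|_\infty\leq L$, we have $Af\leq H_+(\nabla f)$ for $H_+(y):=\Lambda(y)+L|y|$, and Lemma~\ref{lem:pm} gives $S(t)f\leq S_+(t)f$. A short Fenchel computation identifies $H_+^*$ as the infimal convolution $H_+^*(x)=\inf_{z\in B_{\R^d}(L)}\Lambda^*(x+z)$, so that for $f=-\infty\one_{A^c}$ one obtains $(S_+(1)f)(0)=-\inf_{w\in A_L}\Lambda^*(w)$.

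The crux is the probabilistic translation. For $f\in\Cb$, Corollary~\ref{cor:LLN} shows that $(I(\tfrac1n)^n f)(0)$ is a fully nested shortfall-risk expression in the independent coordinates $\xi_1,\dots,\xi_n$ that converges to $(S(1)f)(0)$; by the extension of $\E$ to upper semicontinuous functions the same nested quantity is well defined for $f\in\Ub$. The key lemma, which is the clever estimate borrowed from~\cite{Lacker}, is an inequality of the form
\[ n^{p-1}\,\P(X_n\in A)\leq\big(-(I(\tfrac1n)^n f)(0)\big)^{-p},\qquad f:=-\infty\one_{A^c}, \]
valid whenever the right-hand side is finite. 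Its proof rests on the recursive structure of the nested risk measure together with Minkowski's and Jensen's inequalities in $L^p$; the decisive point is that iterating the risk measure over the $n$ independent coordinates upgrades the naive one-step rate $n^{-p}$ to the correct $n^{-(p-1)}$. Establishing this inequality with the correct exponent and constant is the main obstacle, and it is exactly here that the interplay between independence and the $L^p$-loss enters.

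Finally I would run the limiting argument verbatim from the end of the proof of Theorem~\ref{thm:cramer}. Approximating $f=-\infty\one_{A^c}\in\Ub$ from above by functions $f_k\in\Cb$ with $f_k\downarrow f$, both $g\mapsto(I(\tfrac1n)^n g)(0)$ and $g\mapsto(S_+(1)g)(0)$ are continuous from above on $\Ub$ (the former by continuity from above of $\E$ propagated through the finite iteration, the latter by Dini's theorem together with the coercivity of $H_+^*$). Since $(I(\tfrac1n)^n f_k)(0)\to(S(1)f_k)(0)\leq(S_+(1)f_k)(0)$, interchanging a supremum and an infimum at the cost of an inequality gives
\[ \limsup_{n\to\infty}(I(\tfrac1n)^n f)(0)\leq\inf_{k\in\N}(S_+(1)f_k)(0)=(S_+(1)f)(0)=-\inf_{w\in A_L}\Lambda^*(w). \]
Combining this with the key estimate and the fact that $x\mapsto x^{-p}$ is continuous and decreasing on $(0,\infty)$ yields
\[ \limsup_{n\to\infty}n^{p-1}\,\P(X_n\in A)\leq\Big(-\limsup_{n\to\infty}(I(\tfrac1n)^n f)(0)\Big)^{-p}\leq\Big(\inf_{w\in A_L}\Lambda^*(w)\Big)^{-p}, \]
which is the assertion; the bound is vacuous when $\inf_{w\in A_L}\Lambda^*(w)=0$.
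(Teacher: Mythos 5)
Your architecture coincides with the paper's: the same shortfall expectation $\E[f]=\inf\{m\in\R\colon\e[((1+f(\xi_1)-m)^+)^p]\leq 1\}$, continuity from above from the moment assumption (the paper cites~\cite[Chapter~4.9]{FS16}, your dominated-convergence check is fine), Corollary~\ref{cor:LLN} for the semigroup with generator $\Lambda(\nabla f)+\phi_0\nabla f$, the comparison $S(1)f\leq S_+(1)f$ with $H_+=\Lambda+L|\cdot|$ and $H_+^*(x)=\inf_{y\in B(L)}\Lambda^*(x+y)$, and the closing step that approximates $f=-\infty\one_{A^c}\in\Ub$ from above and interchanges a supremum with an infimum, exactly as at the end of Theorem~\ref{thm:cramer}.

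The genuine gap is the step you yourself flag as ``the main obstacle'': the estimate $n^{p-1}\P(X_n\in A)\leq\big(-(I(\tfrac1n)^n f)(0)\big)^{-p}$. You state it in the correct form and attribute it to~\cite{Lacker}, but you neither prove it nor identify the actual ingredient, and the mechanism you sketch (Minkowski's and Jensen's inequalities applied to the recursive nested risk measure) is not how it is obtained; there is no visible way to extract the exponent $n^{-(p-1)}$ from that sketch. The paper works on the dual side instead: by Fenchel--Moreau, $\E[f]=\sup_{\nu\in\p}\big(\int_{\R^d}f\,\d\nu-\alpha(\nu)\big)$, and an induction over the iteration gives
\[ \big(I\big(\tfrac1n\big)^n f\big)(0)=\sup_{\nu\in\p^n}\left(\int_{(\R^d)^n}
   f\big(\psi_n\big(\tfrac1n,0,x_1,\ldots,x_n\big)\big)\,\nu(\d x_1,\ldots,\d x_n)
   -\tfrac1n\alpha_n(\nu)\right), \]
where $\alpha_n(\nu)$ is the sum of the one-step penalties along the disintegration of $\nu$ into kernels. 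The borrowed input is then~\cite[Lemma~4.2]{Lacker}, which is a bound on the penalty, not on the probability: $\alpha_n(\nu)\leq n^{\nicefrac{1}{p}}\|\d\nu/\d\mu^n\|_{L^q(\mu^n)}$ for $\nu\ll\mu^n$, with $q$ the conjugate exponent. The bridge to the probability estimate, following the proof of~\cite[Theorem~1.2]{Lacker}, is the choice of the conditional measure $\nu:=\mu^n(\,\cdot\mid\psi_n(\tfrac1n,0,\cdot)\in A)$, whose density has $L^q(\mu^n)$-norm $\P(X_n\in A)^{-\nicefrac{1}{p}}$; since $f\circ\psi_n=0$ on that event, this yields $(I(\tfrac1n)^n f)(0)\geq -n^{\nicefrac{1}{p}-1}\P(X_n\in A)^{-\nicefrac{1}{p}}$, which is the paper's (sign-safe) version of your key lemma and avoids the finiteness caveat you needed. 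Without this duality argument, or an equivalent derivation of the penalty bound and the conditional-measure step, your proof does not close; everything else in the proposal matches the paper's proof.
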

\begin{proof}
 We consider the short fall risk measure
 \[ \E\colon {\rm C}_{\tilde{\kappa}}\to\R,\; f\mapsto\inf\{m\in\R\colon\e[((1+f(\xi_1)-m)^+)^p]\leq 1\} \]
 which is indeed a convex expectation and continuous from above, see~\cite[Chapter~4.9]{FS16}. 
 Hence, Corollary~\ref{cor:LLN} yields a corresponding semigroup $(S(t))_{t\geq 0}$ with generator 
 \[ (Af)(x)=\Lambda(\nabla f(x))+\phi_0(x)\nabla f(x)
 	\quad\mbox{for all } f\in\Cb^1 \mbox{ and } x\in\R^d. \]
 Similar to the proof of Theorem~\ref{thm:cramer}, by using that the functionals
 $\Phi(f):=(S(1)f)(0)$ and $\Phi_n(f):=(I(\frac{1}{n})^n f)(0)$ are continuous from above 
 on $\Ub$, one can show that
 \[ \limsup_{n\to\infty}\big(I\big(\tfrac{1}{n}\big)^n f\big)(0)
 	\leq -\inf_{x\in A_L}\Lambda^*(x), \]
 where $f:=-\infty\one_{A^c}$ and $(I(t)f)(x):=t\E[\frac{1}{t}f(x+t\xi_1)]$. It remains to show that
 \[ -n^{-\frac{p-1}{p}}\P(X_n\in A)^{-\frac{1}{p}}\leq\big(I\big(\tfrac{1}{n}\big)^n f\big)(0)
 	\quad\mbox{for all } n\in\N. \]
 To do so, we want to apply~\cite[Lemma~4.2]{Lacker}. Let $\p$ be the set of all probability 
 measures on the Borel-$\sigma$-algebra $\B(\R^d)$. By Fenchel--Moreau's theorem, we have
 \[ \E[f]=\sup_{\nu\in\p}\left(\int_{\R^d} f\,\d\nu-\alpha(\nu)\right)
 	\quad\mbox{for all } f\in\Cb, \]
 where $\alpha(\nu):=\sup_{f\in\Cb}\big(\int_{\R^d} f\,\d\nu-\E[f]\big)$. By induction, one can
 show that
 \[ \big(I\big(\tfrac{1}{n}\big)^n f\big)(0)
 	=\sup_{\nu\in\p^n}\left(\int_{(\R^d)^n}f\big(\psi_n\big(\tfrac{1}{n},0,x_1,\ldots,x_n\big)\big)
 		\nu(\d x_1,\ldots,\d x_n)-\frac{1}{n}\alpha_n(\nu)\right) \]
 for all $n\in\N$ and $f\in\Cb$, where $\p^n$ consists of all probability measures on 
 the Borel-$\sigma$-algebra $\B((\R^d)^n)$. Moreover, the penalization functions
 $\alpha_n\colon\p^n\to [0,\infty]$ are defined by
 \[ \alpha_n(\nu):=\int_{(\R^d)^n}\sum_{i=1}^n \alpha(\nu_{i-1,i}(x_1,\ldots,x_{i-1}))\,\nu(\d x_1,\ldots, \d x_n), \]
 where the kernels $\nu_{i-1,i}$ are determined by the disintegration 
 \[ \nu(\d x_1,\ldots,\d x_n)=\nu_{0,1}(\d x_1)\prod_{i=2}^n \nu_{i-1,i}(x_1,\ldots,x_{i-1})(\d x_i). \]
 It follows from~\cite[Lemma~4.2]{Lacker} that
 \[ \alpha_n(\nu)\leq n^\frac{1}{p}\left\|\frac{\d\nu}{\d\mu^n}\right\|_{L^q(\mu^n)}
 	\quad\mbox{for all } \nu\ll\mu, \]
 where $\mu:=\P\circ\xi_1^{-1}$ and $\mu^n:=\mu\otimes\ldots\otimes\mu$ denotes 
 the $n$-fold product measure. Now, the claim follows similarly to the proof 
 of~\cite[Theorem~1.2]{Lacker}. 
\end{proof}

\subsection{Uncertain samples in Wasserstein space}
\label{sec:Wasser}

As an illustration of the previous abstract results, we consider convex expectations that 
are defined as a supremum over a set of probability measures which are weighted according 
to their Wasserstein distance to a fixed reference model. This type of uncertainty has previously 
been studied in a framework with nonlinear semigroups corresponding to Markov processes 
with uncertain transition probabilities, see~\cite{BDKN,FKN}. Let $p\in (1,\infty)$ and denote 
by $\p_p$ the $p$-Wasserstein space consisting of all probability measures on the 
Borel-$\sigma$-algebra $\B(\R^d)$ with finite $p$-th moment. We endow $\p_p$ with the 
$p$-Wasserstein distance 
\[ \W_p(\mu,\nu):=\left(\inf_{\pi\in\Pi(\mu,\nu)}\int_{\R^d\times\R^d}
	|x-y|^{\frac{1}{p}}\,\pi(\d x,\d y)\right)^{\frac{1}{p}}
	\quad\mbox{for all } \mu,\nu\in\p_p, \]
where $\Pi(\mu,\nu)$ consists of all probability measures on $\B(\R^d\times\R^d)$
with first marginal $\mu$ and second marginal $\nu$. Furthermore, let 
$\phi\colon\R_+\to [0,\infty]$ be a function with $\phi(0)=0$ and 
$\lim_{c\to\infty}\nicefrac{\phi(c)}{c}=\infty$. In the sequel, we fix $\mu\in\p_p$ 
and define
\begin{equation} \label{eq:Wasser.E}
 \E\colon {\rm C}_{\tilde{\kappa}}\to\R,\; f\mapsto\sup_{\nu\in\p_p}
 \left(\int_{\R^d}f(x)\,\nu(\d x)-\phi(\W_p(\mu,\nu))\right).
\end{equation}
In the case $\phi:=\infty\one_{[0,r]}$, the functional $\E$ is defined as supremum 
over all measures in a Wasserstein ball with radius $r\geq 0$ around the reference 
model $\mu$. Moreover, for every $t>0$, $f\in\Cb$ and $x\in\R^d$, 
\begin{align*}
 t\E\left[\frac{1}{t}f(x+t\xi)\right] 
 &=\sup_{\nu\in\p_p}\left(\int_{\R^d} f(x+ty)\,\nu(\d y)-t\phi(\W_p(\mu,\nu))\right) \\
 &=\sup_{\nu\in\p_p}
 \left(\int_{\R^d}f(x+y)\,\nu_t(\d y)-t\phi\left(\frac{\W_p(\mu_t,\nu_t)}{t}\right)\right) \\
 &=\sup_{\nu\in\p_p}
 \left(\int_{\R^d} f(x+y)\,\nu(\d y)-t\phi\left(\frac{\W_p(\mu_t,\nu)}{t}\right)\right),
\end{align*}
where $\xi:=\id_{\R^d}$, $\nu_t:=\nu\circ (t\xi)^{-1}$ and $\mu_t:=\mu\circ (t\xi)^{-1}$. 
Hence, due to the expression in the last line, the definition
\[ (I(t)f)(x):=t\E\left[\frac{1}{t}f(x+t\xi)\right] \]
is consistent with the scaling of the penalization function $\phi$ in~\cite{BDKN,FKN}. 
In the following theorem, choosing $p,q\in (1,\infty)$ with $1/p+1/q=1$,
$\varphi(c):=c^p/p$ and $m=0$ yields the generator $(Af)(x)=|\nabla f(x)|^q/q$
for all $f\in\Cb^1$ and $x\in\R^d$.

\begin{theorem} \label{thm:Wasser}
 The functional $\E$ defined by equation~\eqref{eq:Wasser.E} is a convex expectation which 
 is continuous from above. Hence, there exists a strongly continuous convex monotone semigroup $(S(t))_{t\geq 0}$ 
 on $\Cb$ with
 \[ S(t)f=\lim_{n\to\infty}I\big(\tfrac{t}{n}\big)^n f\in\Cb
 	\quad\mbox{for all } t\geq 0 \mbox{ and } f\in\Cb \]
 which has the properties~(i)-(v) from Theorem~\ref{thm:cher} with $\omega=L=0$.
 For every $f\in\Cb^1$, it holds that $f\in D(A)$ and 
 \[ (Af)(x)=\sup_{c\geq 0}(c|\nabla f(x)|-\phi(c))+m\nabla f(x)
 	\quad\mbox{for all } x\in\R^d, \]
 where $m:=\int_{\R^d}y\,\mu(\d y)$. Furthermore, let $(\Omega,\H,\bar{\E})$ be a 
 convex expectation space and $(\xi_n)_{n\in\N}$ be an iid sequence of random 
 vectors $\xi_n\colon\Omega\to\R^d$ with $\bar{\E}[f(\xi_n)]=\E[f]$ for all $f\in\Cb$. 
 Then, for every $f\in\Cb$, $t>0$ and $x\in\R^d$, 
 \[ \lim_{n\to\infty}\frac{t}{n}\bar{\E}\left[\frac{n}{t}f\Big(x+\frac{t}{n}\sum_{i=1}^n \xi_i\Big)\right]
 	=\sup_{y\in\R^d}\big(f(x+t(m+y))-\phi(|y|)t\big). \]
\end{theorem}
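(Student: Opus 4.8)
The plan is to recognize that the choice $\psi(t,x,y)=x+ty$ places us exactly in the setting of Theorem~\ref{thm:LLN2}, so that the existence of the semigroup, the Chernoff representation, the strong continuity, properties~(i)--(v) of Theorem~\ref{thm:cher} with $\omega=L=0$, the generator $(Af)(x)=\E[\nabla f(x)\xi]$ with $\xi=\id_{\R^d}$, and the limit identity are all inherited once I know that $\E$ is a convex expectation which is continuous from above. The remaining content is therefore purely the verification of this hypothesis together with two explicit convex-duality computations: evaluating $\E[z\xi]$ and evaluating its conjugate. I would organize the proof accordingly.

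Convexity and monotonicity of $f\mapsto\E[f]$ are immediate, since $\E$ is a supremum of the affine functionals $f\mapsto\int_{\R^d}f\,\d\nu-\phi(\W_p(\mu,\nu))$, and the normalization $\E[c]=c$ follows from $\phi\geq 0$, $\phi(0)=0$ by taking $\nu=\mu$. The one substantial point, and the main obstacle, is continuity from above. Here I would exploit the superlinearity $\phi(c)/c\to\infty$: for $f\in\Ck$ and $\nu\in\p_p$ an optimal coupling together with Jensen's inequality gives $\int_{\R^d}|y|\,\d\nu\leq\W_p(\mu,\nu)+\int_{\R^d}|x|\,\d\mu$, so along a sequence $f_n\downarrow 0$ the quantity $\int f_n\,\d\nu-\phi(\W_p(\mu,\nu))$ tends to $-\infty$ as $\W_p(\mu,\nu)\to\infty$, uniformly in $n$ (being dominated by $f_1$). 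Hence the supremum defining $\E[f_n]$ may be restricted to a fixed ball $\{\nu:\W_p(\mu,\nu)\leq R_0\}$. Such balls have uniformly bounded $p$-th moments, are therefore tight and, because $p>1$, uniformly integrable for the linear weight $|y|$; combining uniform integrability off a large compact set with Dini's theorem on that set yields $\sup_{\W_p(\mu,\nu)\leq R_0}\int f_n\,\d\nu\to 0$, which is the desired continuity from above. This tightness/uniform-integrability step for $\W_p$-balls is where the structure of $\p_p$ and the assumption $p>1$ are essential.

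With $\E$ admissible, Theorem~\ref{thm:LLN2} gives $(S(t)f)(x)=\sup_{y\in\R^d}(f(x+ty)-\ell(y)t)$ and $(Af)(x)=\E[\nabla f(x)\xi]$, where $\ell(y):=\sup_{z}(\langle y,z\rangle-\E[z\xi])$ (the function denoted $\phi$ in Theorem~\ref{thm:LLN2}, renamed here to avoid a clash with the penalty). I would next compute $\E[z\xi]$ by optimal-transport duality. The lower bound $\sup_{\W_p(\mu,\nu)\leq c}\int\langle z,y\rangle\,\d\nu\geq\langle m,z\rangle+c|z|$ comes from translating $\mu$ by $c\,z/|z|$, while for any coupling $\pi$ of $\mu$ and $\nu$ Jensen's inequality gives $\int\langle z,y-x\rangle\,\d\pi\leq|z|\int|y-x|\,\d\pi\leq|z|\,\W_p(\mu,\nu)$, which furnishes the matching upper bound. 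Therefore $\E[z\xi]=\langle m,z\rangle+\sup_{c\geq 0}(c|z|-\phi(c))$, and substituting $z=\nabla f(x)$ yields precisely the claimed generator.

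Finally I would dualize once more to pass from $\ell$ to the stated Hopf--Lax form. Writing $\phi^{\ast}(s):=\sup_{c\geq 0}(cs-\phi(c))$, the map $z\mapsto\E[z\xi]=\langle m,z\rangle+\phi^{\ast}(|z|)$ is radial up to the linear shift, so $\ell(y)=\sup_z(\langle y-m,z\rangle-\phi^{\ast}(|z|))=\phi^{\ast\ast}(|y-m|)=\phi(|y-m|)$, the last identity being Fenchel--Moreau and using that $\phi$ coincides with its convex lower-semicontinuous envelope (for a non-convex penalty one obtains this envelope in place of $\phi$). Substituting $y\mapsto m+y$ converts $\sup_y(f(x+ty)-\ell(y)t)$ into $\sup_y(f(x+t(m+y))-\phi(|y|)t)$. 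The limit identity is then immediate: by Corollary~\ref{cor:IJ} the Chernoff limit is independent of the partition, and the induction in the proof of Theorem~\ref{thm:LLN} gives, for $h=t/n$, the representation $(I(\tfrac{t}{n})^nf)(x)=\tfrac{t}{n}\bar\E[\tfrac{n}{t}f(x+\tfrac{t}{n}\sum_{i=1}^n\xi_i)]$, whose limit is $(S(t)f)(x)=\sup_y(f(x+t(m+y))-\phi(|y|)t)$.
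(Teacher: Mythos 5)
Your proposal matches the paper's proof essentially step for step: continuity from above via the superlinearity of $\phi$ to restrict the supremum to a fixed Wasserstein ball, then uniform integrability/tightness of such balls (the paper's Lemma~\ref{lem:tightW}) combined with Dini's theorem; the computation of $\E[z\xi]$ with the lower bound from translating $\mu$ by $c z/|z|$ and the upper bound from the coupling estimate with H\"older/Jensen; Fenchel--Moreau to pass to the Hopf--Lax form via Theorem~\ref{thm:LLN2}; and the final limit identity from partition-independence (Corollary~\ref{cor:IJ}) plus the induction representation of $I(\tfrac{t}{n})^n$. Your explicit caveat that for a non-convex or non-lsc penalty one obtains the convex lower-semicontinuous envelope in place of $\phi$ is a point the paper's appeal to Fenchel--Moreau leaves implicit, but otherwise the two arguments coincide.
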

\begin{proof}
 First, we show that $\E$ is a convex expectation which is continuous from above. 
 Let $f\in {\rm C}_{\tilde{\kappa}}$ and choose $c\geq 0$ with $|f(x)|\leq c(1+|x|)$ for all $x\in\R^d$. 
 We use
 \[ \left|\int_{\R^d}|x|\,\mu(\d x)-\int_{\R^d}|y|\, \nu(\d y)\right|
 	\leq\int_{\R^d\times\R^d}\big||x|-|y|\big|\,\pi(\d x, \d y)
 	\leq\int_{\R^d\times\R^d}|x-y|\,\pi(\d x, \d y) \]
 for all $\pi\in\Pi(\mu,\nu)$, H\"older's inequality and 
 $\lim_{c\to\infty}\nicefrac{\phi(c)}{c}=\infty$ to estimate
 \begin{align*}
  \E[|f|]
  &\leq\sup_{\nu\in\p_p}\left(\int_{\R^d}c(1+|x|)\, \nu(\d x)-\phi(\W_p(\mu,\nu))\right) \\
  &=\sup_{\nu\in\p_p}\left(\int_{\R^d}c(1+|x|)\, \mu(\d x)+\int_{\R^d}c|x|\, \nu(\d x)
  -\int_{\R^d}c|x|\, \mu(\d x)-\phi(\W_p(\mu,\nu))\right) \\
  &\leq\int_{\R^d}c(1+|x|)\,\mu(\d x)
  +\sup_{\nu\in\p_p}\left(c\W_1(\mu,\nu)-\phi(\W_p(\mu,\nu))\right) \\
  &\leq\int_{\R^d}c(1+|x|)\,\mu(\d x)
  +\sup_{\nu\in\p_p}\left(c\W_p(\mu,\nu)-\phi(\W_p(\mu,\nu))\right)<\infty.
 \end{align*}
 It follows from $\phi(0)=0$ that $\E[c]=c$ for all $c\in\R$. Moreover, $\E$ is clearly
 convex and monotone. Let $(f_n)_{n\in\N}\subset {\rm C}_{\tilde{\kappa}}$ with $f_n\downarrow 0$
 and choose $c\geq 0$ with $f_1\leq c(1+|x|)$. Then, 
 \[ \int_{\R^d} f_n(x)\,\nu(\d x)-\phi(\W_p(\mu,\nu))
 	\leq\int_{\R^d}c(1+|x|)\,\mu(\d x)+c\W_p(\mu,\nu)-\phi(\W_p(\mu,\nu)) \]
 for all $\nu\in\p_p$. Since $\lim_{c\to\infty}\nicefrac{\phi(c)}{c}=\infty$, there exists 
 $R\geq 0$ with
 \[ \E[f_n]=\sup_{\nu\in M}\left(\int_{\R^d} f_n(x)\,\nu(\d x)-\phi(\W_p(\mu,\nu))\right)
 	\leq\sup_{\nu\in M}\int_{\R^d} f_n(x)\,\nu(\d x), \]
 where $M:=\{\nu\in\p_p\colon\W_p(\mu,\nu)\leq R\}$. For every $\epsilon>0$,
 Lemma~\ref{lem:tightW} implies that there exists $r\geq 0$ with 
 \[ \sup_{\nu\in M}\int_{B(r)^c}c(1+|x|)\,\nu(\d x)\leq\frac{\epsilon}{2}. \]
 Moreover, we can use Dini's theorem to choose $n_0\in\N$ with
 \[ \int_{\R^d} f_n(x)\,\nu(\d x)
 	\leq\int_{B(r)}f_n(x)\,\nu(\d x)+\int_{B(r)^c}c(1+|x|)\,\nu(\d x)\leq\epsilon \]
 for all $n\geq n_0$ and $\nu\in M$. We obtain $\E[f_n]\downarrow 0$ as $n\to\infty$. 
 Now, Theorem~\ref{thm:LLN} yields the existence of the semigroup $(S(t))_{t\geq 0}$.

 Second, for every $x\in\R^d$, we show that
 \[ \E[x\xi]=\sup_{c\geq 0}(c|x|-\phi(c))+\int_{\R^d}xy\,\mu(\d y). \]
 W.l.o.g., let $x\neq 0$. For every $c\geq 0$, we choose $\nu:=\mu*\delta_\frac{cx}{|x|}$. 
 Then, 
 \[ \int_{\R^d}xy\,\nu(\d y)
 	=\int_{\R^d}\int_{\R^d}x(y+z)\,\mu(\d y)\delta_\frac{cx}{|x|}(\d z)
 	=c|x|+\int_{\R^d}xy\,\mu(\d y). \]
 We take the supremum over $c\geq 0$ and use $\W_p(\mu,\nu)=c$ to conclude
 \[ \E[x\xi]\geq\sup_{c\geq 0}(c|x|-\phi(c))+\int_{\R^d}xy\,\mu(\d y). \]
 For every $\nu\in\p_p$, it follows from
 \[ \left|\int_{\R^d}xy\,\mu(\d y)-\int_{\R^d}xz\,\nu(\d z)\right|
 	\leq |x|\int_{\R^d\times\R^d}|y-z|\,\pi(\d y, \d z) \]
 for all $\pi\in\Pi(\mu,\nu)$ and H\"older's inequality that 
 \begin{align*}
  \int_{\R^d}xy\,\nu(\d y)-\phi(\W_p(\mu,\nu))
  &=\int_{\R^d}xy\,\mu(\d y)+\int_{\R^d}xy\,\nu(\d y)-\int_{\R^d}xy\,\mu(\d y)-\phi(c) \\
  &\leq\int_{\R^d}xy\,\mu(\d y)+\W_1(\mu,\nu)|x|-\phi(c) \\
  &\leq\int_{\R^d}xy\,\mu(\d y)+c|x|-\phi(c), 
 \end{align*}
 where $c:=\W_p(\mu,\nu)$. 
 Taking the supremum over $\nu\in\p_p$ yields
 \[ \E[x\xi]\leq\sup_{c\geq 0}(c|x|-\phi(c))+\int_{\R^d}xy\,\mu(\d y). \]
 In particular, for every $f\in\Cb^1$ and $x\in\R^d$, we obtain from Theorem~\ref{thm:LLN2}
 that
 \[ (Af)(x)=\E[\nabla f(x)\xi]
 	=\sup_{c\geq 0}(c|\nabla f(x)|-\phi(c))+m\nabla f(x), \]
 where $m:=\int_{\R^d}y\,\mu(\d y)$. Moreover, for every $x\in\R^d$, 
 \[ \E[x\xi]=\sup_{y\in\R^d}(xy-\psi(y)),  \quad\mbox{where } \psi(y):=\phi(|y-m|). \]
 Hence, Fenchel--Moreau's theorem and Theorem~\ref{thm:LLN2} imply
 \[ (S(t)f)(x)=\sup_{y\in\R^d}\big(f(x+t(m+y))-\phi(|y|)\big) \]
 for all $t\geq 0$, $f\in\Cb$ and $x\in\R^d$. We apply again Theorem~\ref{thm:LLN2}
 to obtain the last part of the statement. 
\end{proof}

We conclude this subsection by showing that the semigroup (and thus the distribution) 
which we obtain in the limit is the same whether we define the convex expectation as 
the supremum over an uncertainty set of measures or a set of parameters in $\R^d$. 
The latter corresponds to shifting the fixed measures $\mu$ in all possible deterministic 
directions. We define
\[ \tilde{\E}\colon {\rm C}_{\tilde{\kappa}}\to\R,\; f\mapsto\sup_{\lambda\in\R^d}
	\left(\int_{\R^d} f(x+\lambda)\,\mu(\d x)-\phi(|\lambda|)\right), \]
$J(0):=\id_{\Cb}$ and, for every $t>0$, $f\in\Cb$ and $x\in\R^d$,
\[ (J(t)f)(x):=t\tilde{\E}\left[\frac{1}{t}f(x+t\xi)\right]. \]

\begin{corollary}
 Denoting by $(S(t))_{t\geq 0}$ the semigroup from Theorem~\ref{thm:Wasser},
 we have 
 \[ S(t)f=\lim_{n\to\infty}J\big(\tfrac{t}{n}\big)^n f
 	\quad\mbox{for all } t\geq 0 \mbox{ and } f\in\Cb. \]
\end{corollary} 
\begin{proof}
 It follows from $\W_p(\mu,\nu)=|\lambda|$ for $\nu:=\mu*\delta_\lambda$ with 
 $\lambda\in\R^d$ that $\tilde{\E}[|f|]\leq\E[|f|]$ for all $f\in {\rm C}_{\tilde{\kappa}}$. 
 Hence, $\tilde{\E}\colon {\rm C}_{\tilde{\kappa}}\to\R$ is a well-defined convex expectation 
 which is continuous from above. By Theorem~\ref{thm:LLN2}, there exists a semigroup $(T(t))_{t\geq 0}$
 on ${\rm C}_{\tilde{\kappa}}$ with 
 \[ T(t)f=\lim_{n\to\infty}J\big(\tfrac{t}{n}\big)^n f 
 	\quad\mbox{for all } t\geq 0 \mbox{ and } f\in\Cb \]
 and generator $(Bf)(x)=\tilde{\E}[\nabla f(x)\xi]$ for all $f\in\Cb^1$ and $x\in\R^d$. 
 By Theorem~\ref{thm:Wasser} and a straightforward computation, it holds that
 \[ (Af)(x)=\sup_{c\geq 0}\big(c|\nabla f(x)|-\phi(c)\big)+m\nabla f(x)=(Bf)(x) \]
 for all $f\in\Cb^1$ and $x\in\R^d$. Hence, Corollary~\ref{cor:IJ} implies $S(t)f=T(t)f$ 
 for all $t\geq 0$ and $f\in\Cb$.
\end{proof}

\section{Second order scaling limits}
\label{sec:CLT}

Throughout this section, we choose the weight function
\[ \kappa\colon\R^d\to (0,\infty),\; x\mapsto (1+|x|)^{-2}. \]
In analogy to the previous section, let $(\xi_n)_{n\in\N}\subset\H^d$ be a sequence
of iid random vectors on a convex expectation space $(\Omega,\H,\bar{\E})$ with 
finite second moments and $\bar{\E}[a\xi_1]=0$ for all $a\in\R^d$. For the sake of readability, 
we refrain from introducing a sequence $(\psi_n)_{n\in\N}$ of recursively defined functions 
as in Section~\ref{sec:LLN} and study only the limit behaviour of the sequence 
\[ X_n:=\frac{1}{\sqrt{n}}\sum_{i=1}^n \xi_i \]
which corresponds to choosing $\psi(t,x,y):=x+\sqrt{t}y$. Again, we are interested in the 
behaviour of the distributions of $X_n$ and requiring finite second moments means that 
the distribution of $\xi_1$ is well-defined even for continuous functions with at most 
quadratic growth at infinity, i.e., 
\[ F_{\xi_1}\colon {\rm C}_{\tilde{\kappa}}\to\R,\; f\mapsto\bar{\E}[f(\xi_1)]. \]
Denoting this functional by $\E[\,\cdot\,]$, we remark that continuity from above on 
${\rm C}_{\tilde{\kappa}}$ is equivalent to the uniform integrability condition 
$\lim_{c\to\infty}\bar{\E}[(|\xi_1|^2-c)^+]=0$ from~\cite{Peng19}. Defining
\[ (I(t)f)(x):=t\bar{\E}\left[\frac{1}{t}f(x+\sqrt{t}\xi_1)\right] \]
for all $t>0$, $f\in\Cb$ and $x\in\R^d$ leads to
\[ (S(1)f)(0)=\lim_{n\to\infty}\big(I\big(\tfrac{1}{n}\big)^n f\big)(0)
	=\lim_{n\to\infty}\frac{1}{n}\bar{\E}\left[nf\left(\frac{1}{\sqrt{n}}\sum_{i=1}^n \xi_i\right)\right]. \]
This result is proven in Subsection~\ref{sec:CLT1} analogously to Theorem~\ref{thm:LLN}.
Hence, it seems very likely that a more general result including a sequence $(\psi_n)_{n\in\N}$ 
of recursively defined functions can be obtained mainly at the cost of additional terms appearing
in the estimates which would make the proof quite lengthy and might disguise the main
differences between the first and second order case. In Subsection~\ref{sec:Wasser2},
we illustrate again the abstract results by considering convex expectations that are defined 
as a (weighted) supremum over a set of probability measures. The necessary modifications of 
the setting will be explained in full detail. We also want to mention that this approach allows for a 
generalization of the previous results from~\cite{BEK, FKN} about Markov processes with uncertain 
transition probabilities, which were restricted to first order perturbations, to the second order case. 
The possibility of this extension was already conjectured in~\cite{BEK} but whether the original 
construction of the semigroup based on monotone convergence can be transferred remains an 
open question.

\subsection{CLT for convex expectations}
\label{sec:CLT1}

Let $\E\colon {\rm C}_{\tilde{\kappa}}\to\R$ be a convex expectation which is continuous from above. 
By~\cite[Theorem~C.1]{BDKN}, there exists a convex monotone extension 
$\E\colon {\rm B}_{\tilde{\kappa}}\to\R$ such that, for every $\epsilon>0$ and $c\geq 0$, 
there exists $K\Subset\R^d$ with 
\begin{equation} \label{eq:tight2}
 \E\left[\tfrac{c}{\kappa}\one_{K^c}\right]<\epsilon.
\end{equation}
Here, ${\rm B}_{\tilde{\kappa}}$ denotes the space of all Borel measurable functions $f\colon\R^d\to\R$ 
with $\|f\|_{\tilde{\kappa}}<\infty$. We define $I(0):=\id_{\Cb}$ and, for every $t>0$, $f\in\Cb$ 
and $x\in\R^d$,  
\[ (I(t)f)(x):=t\E\left[\frac{1}{t}f(x+\sqrt{t}\xi)\right], 
	\quad\mbox{where}\quad \xi:=\id_{\R^d}. \]
Moreover, we denote by $\Cb^2$ the space of all twice differentiable functions $f\in\Cb$ 
such that all partial derivatives are in $\Cb$ and by 
$D^2 f(x)=(\partial_i\partial_j f)_{i,j=1,\ldots,d}\in\R^{d\times d}$ the second derivative. 
Let $x^T\in\R^{1\times d}$ be the transposed vector for all $x\in\R^d\cong\R^{d\times 1}$. 
The space $\R^{d\times d}$ is subsequently endowed with the Frobenius norm.

\begin{theorem} \label{thm:CLT}
 Assume that $\E[a\xi]=0$ for all $a\in\R^d$. Then, there exists a strongly continuous 
 convex monotone semigroup $(S(t))_{t\geq 0}$ on $\Cb$ with
 \begin{equation} \label{eq:CLT}
  S(t)f=\lim_{n\to\infty}I\big(\tfrac{t}{n}\big)^n f\in\Cb
  \quad\mbox{for all } t\geq 0 \mbox{ and } f\in\Cb
 \end{equation}
 which has the properties~(i)-(v) from Theorem~\ref{thm:cher} with $\omega=L=0$. 
 For every $f\in\Cb^2$ and $x\in\R^d$, it holds that $f\in D(A)$ and 
 \[ (Af)(x)=\E\left[\frac{1}{2}\xi^T D^2f(x)\xi\right]. \]
 In addition, for every convex expectation space $(\Omega,\H,\bar{\E})$ and iid sequence
 $(\xi_n)_{n\in\N}\subset\H^d$ with $\bar{\E}[f(\xi_n)]=\E[f]$ for all $f\in\Cb$, 
 \begin{equation} \label{eq:CLT2}
  (S(1)f)(0)=\lim_{n\to\infty}\frac{1}{n}\bar{\E}\left[nf\left(\frac{1}{\sqrt{n}}\sum_{i=1}^n \xi_i\right)\right]
  \quad\mbox{for all } f\in\Cb. 
 \end{equation}
\end{theorem}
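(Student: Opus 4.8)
The plan is to verify that the family $(I(t))_{t\ge 0}$ satisfies Assumption~\ref{ass:cher} with $\omega=L=0$ and then read off the semigroup, its properties and the generator from Theorem~\ref{thm:cher}. Since the stochastic part of $\psi(t,x,y)=x+\sqrt t\,y$ does not depend on $x$, the operator is \emph{exactly} translation invariant, $I(t)(\tau_z f)=\tau_z I(t)f$, so Assumption~\ref{ass:cher}(iv) holds with $L=0$. Conditions (i),(ii) are immediate from the convex-expectation properties of $\E$ (Lemma~\ref{lem:E}), while (iii) with $\omega=0$ follows from the $1$-Lipschitz bound $|\E[u]-\E[v]|\le\|u-v\|_\infty$ (a consequence of monotonicity and $\E[c]=c$); the same bound and monotonicity give (vii), i.e. $I(t)\colon\Lipb(r)\to\Lipb(r)$.

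For Assumption~\ref{ass:cher}(vi) I would invoke Remark~\ref{rem:cher}(b) with the auxiliary weight $\tilde\kappa(x):=(1+|x|^2)^{-1}$, which decays faster than $\kappa\equiv 1$. Here the centering hypothesis is already essential: writing, for $\|f\|_{\tilde\kappa}\le 1$, $|(I(t)f)(x)|\le\E[1+|x+\sqrt t\,\xi|^2]=1+|x|^2+\E[2\sqrt t\,x\xi+t|\xi|^2]$, the dangerous cross term is of order $\sqrt t$, which would destroy the iteration $(1+C\sqrt{t/n})^n\to\infty$. Splitting $2\sqrt t\,x\xi+t|\xi|^2$ convexly (Lemma~\ref{lem:lambda}) and using $\E[a\xi]=0$ for every $a\in\R^d$ annihilates the linear part and yields $\E[2\sqrt t\,x\xi+t|\xi|^2]\le t\E[|\xi|^2]$. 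Hence $\|I(t)f\|_{\tilde\kappa}\le e^{ct}\|f\|_{\tilde\kappa}$ with $c:=\E[|\xi|^2]$, and inequality~\eqref{eq:tight2} then gives (vi).

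The heart of the proof, and the \emph{main obstacle}, is the generator computation $(I'(0)f)(x)=\E[\tfrac12\xi^T D^2 f(x)\xi]$ for $f\in\Cb^2$, which upgrades the first-order scheme of Theorem~\ref{thm:LLN} to a genuinely second-order one. I would Taylor expand to second order, $f(x+\sqrt h\,\xi)-f(x)=\sqrt h\,\nabla f(x)\xi+\tfrac h2\xi^T D^2 f(x)\xi+R_h$, so that $\tfrac1h\big(f(x+\sqrt h\,\xi)-f(x)\big)-\tfrac12\xi^T D^2 f(x)\xi=\tfrac{1}{\sqrt h}\nabla f(x)\xi+\tfrac{R_h}{h}$, with $\tfrac{R_h}{h}=\int_0^1(1-s)\,\xi^T[D^2 f(x+s\sqrt h\,\xi)-D^2 f(x)]\xi\,\d s$. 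The obstruction is the first-order term $\tfrac{1}{\sqrt h}\nabla f(x)\xi$, which diverges as $h\downarrow0$; under the convex $\E$ it cannot simply be dropped, but a convexity split (Lemma~\ref{lem:lambda}) combined with the centering $\E[a\xi]=0$ annihilates it, exactly as in the previous paragraph. The remainder is controlled by decomposing $\xi$ via $\one_{B_{\R^d}(n)}$ and $\one_{B_{\R^d}(n)^c}$: on the compact part the uniform continuity of $D^2 f$ makes $\tfrac{R_h}{h}$ small for small $h$, while on the complement the bound $|\tfrac{R_h}{h}|\le\|D^2 f\|_\infty|\xi|^2$ together with~\eqref{eq:tight2} makes the contribution small uniformly in $x\in K$. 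Running this for both the upper and the lower bound (the centering is symmetric, $\E[-a\xi]=0$) yields $\lim_{h\downarrow0}\sup_{x\in K}|(\tfrac{I(h)f-f}{h})(x)-\E[\tfrac12\xi^T D^2 f(x)\xi]|=0$ for every $K\Subset\R^d$, hence $I'(0)f\in\Cb$ with the claimed value.

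Finally, Theorem~\ref{thm:cher} produces the strongly continuous convex monotone semigroup $(S(t))_{t\ge0}$ satisfying~\eqref{eq:CLT} with properties (i)--(v) and $\omega=L=0$, and part~(i) of that theorem together with the previous step gives $\Cb^2\subset D(A)$ and $(Af)(x)=\E[\tfrac12\xi^T D^2 f(x)\xi]$. For the probabilistic identity~\eqref{eq:CLT2} I would fix $t=1$ and $h_n:=\tfrac1n$, so that $I(\pi_n^1)=I(\tfrac1n)^n$, and prove by induction on $k$, using the independence of $(\xi_n)_{n\in\N}$ and the tower property exactly as in the third step of the proof of Theorem~\ref{thm:LLN}, that $(I(\tfrac1n)^k f)(x)=\tfrac1n\bar{\E}[n\,f(x+\tfrac{1}{\sqrt n}\sum_{i=1}^k\xi_i)]$. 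Evaluating at $k=n$, $x=0$ and letting $n\to\infty$ then gives $(S(1)f)(0)=\lim_{n\to\infty}\tfrac1n\bar{\E}[n f(X_n)]$, as required.
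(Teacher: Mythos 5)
Your proposal is correct and follows essentially the same route as the paper's proof: verifying Assumption~\ref{ass:cher} with $\omega=L=0$ (exact translation invariance for~(iv), the quadratic auxiliary weight together with the centering $\E[a\xi]=0$ and Remark~\ref{rem:cher}(b) for~(vi)), computing $I'(0)f$ by a second-order Taylor expansion in which the divergent $h^{-1/2}$-term is annihilated via Lemma~\ref{lem:E}(vi)/Lemma~\ref{lem:lambda} and the remainder is handled by the indicator decomposition, uniform continuity of $D^2f$ and the tightness estimate~\eqref{eq:tight2}, and finally the tower-property induction for~\eqref{eq:CLT2}. The only step you leave implicit is the uniform bound $\|I(t)f-f\|_\infty\leq\E\big[\tfrac{1}{2}\|D^2f\|_\infty|\xi|^2\big]t$, which the paper records explicitly to obtain $\Cbi\subset\L^I$ in Assumption~\ref{ass:cher}(v); it follows immediately from your own expansion together with Lemma~\ref{lem:E}(v)--(vi).
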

\begin{proof}
 It follows from Lemma~\ref{lem:E}(ii) that Assumption~\ref{ass:cher}(i)-(iv) and~(vii) 
 are satisfied with $\omega=L=0$. Moreover, similar to the proof of Theorem~\ref{thm:LLN}, 
 one can show that $\|I(t)f)\|_{\tilde{\kappa}}\leq e^{ct}\|f\|_{\tilde{\kappa}}$ for all $f\in\Cb$ with
 $\|f\|_{\tilde{\kappa}}\leq 1$ and $t\in (0,1]$, where $c:=\frac{1}{2}\E[2|\xi|^2]$. Hence, 
 Remark~\ref{rem:cher}(b) yields that Assumption~\ref{ass:cher}(vi) is satisfied. 
 Next, we show that 
 \[ (I'(0)f)(x)=\E\left[\frac{1}{2}\xi^T D^2 f(x)\xi\right]
 	\quad\mbox{for all } f\in\Cb^2 \mbox{ and } x\in\R^d. \]
 For every $h>0$ and $x\in\R^d$, 
 \[ f(x+\sqrt{h}\xi)=f(x)+\nabla f(x)\sqrt{h}\xi
 	+\int_0^1\int_0^1 sh\xi^T D^2 f(x+rs\sqrt{h}\xi)\xi\,\d r\,\d s. \]
 Hence, it follows from Lemma~\ref{lem:E}(vi) and Lemma~\ref{lem:lambda} that
 \begin{align*}
  &\left(\frac{I(h)f-f}{h}\right)(x)-\E\left[\frac{1}{2}\xi^T D^2 f(x)\xi\right] \\
  &=\E\left[\frac{1}{\sqrt{h}}\nabla f(x)\xi+\int_0^1\int_0^1 s\xi^T D^2 f(x+rs\sqrt{h}\xi)\xi\,\d r\,\d s\right]
  -\E\left[\frac{1}{2}\xi^T D^2 f(x)\xi\right] \\
  &\leq\lambda\E\left[\frac{1}{\lambda}
  \left(\int_0^1\int_0^1 s\xi^T D^2 f(x+rs\sqrt{h}\xi)\xi\,\d r\,\d s-\frac{1}{2}\xi^T D^2 f(x)\xi\right)
  +\frac{1}{2}\xi^T D^2 f(x)\xi\right] \\
  &\quad\, -\lambda\E\left[\frac{1}{2}\xi^T D^2 f(x)\xi\right] \\
  &\leq\frac{\lambda}{2}\E\left[\frac{2}{\lambda}
  \left(\int_0^1\int_0^1 s\xi^T D^2 f(x+rs\sqrt{h}\xi)\xi\,\d r\,\d s-\frac{1}{2}\xi^T D^2 f(x)\xi\right)\right] \\
  &\quad\; +\frac{\lambda}{2}\E\left[\xi^T D^2 f(x)\xi\right]-\lambda\E\left[\frac{1}{2}\xi^T D^2 f(x)\xi\right].
 \end{align*}
 Let $\epsilon>0$ and choose $\lambda\in (0,1]$ with $\E[\|D^2 f\|_\infty |\xi|^2]<\nicefrac{\epsilon}{2}$. 
 Lemma~\ref{lem:E}(iii) and~(v) imply
 \[ \frac{\lambda}{2}\E\left[\xi^T D^2 f(x)\xi\right]-\lambda\E\left[\frac{1}{2}\xi^T D^2 f(x)\xi\right]
 	\leq\frac{\epsilon}{2}. \]
 Furthermore, by inequality~\eqref{eq:tight2}, there exists $n\in\N$ with
 \begin{align*}
  &\frac{\lambda}{2}\E\left[\frac{2}{\lambda}
  \left(\int_0^1\int_0^1 s\xi^T D^2 f(x+rs\sqrt{h}\xi)\xi\,\d r\,\d s-\frac{1}{2}\xi^T D^2 f(x)\xi\right)
  \one_{B(n)^c}(\xi)\right] \\
  &\leq\frac{\lambda}{2}\E\left[\frac{2}{\lambda}
  \left(\int_0^1 s\, \d s+\frac{1}{2}\right)\|D^2 f\|_\infty |\xi|^2\one_{B(n)^c}(\xi)\right]
  \leq\frac{\epsilon}{4}
 \end{align*}
 for all $x\in\R^d$. Now, let $K\Subset\R^d$. Since $D^2 f$ is uniformly continuous 
 on compacts, there exists $\delta>0$ with
 \[ |D^2 f(x+y)-D^2 f(x)|<\frac{\epsilon}{2n^2}
 	\quad\mbox{for all } x\in K \mbox{ and } y\in B(\delta). \]
 Then, for every $h\in (0,\nicefrac{\delta^2}{n^2}]$ and $x\in K$, 
 \begin{align*}
  &\frac{\lambda}{2}\E\left[\frac{2}{\lambda}
  \left(\int_0^1\int_0^1 s\xi^T D^2 f(x+rs\sqrt{h}\xi)\xi\,\d r\,\d s-\frac{1}{2}\xi^T D^2 f(x)\xi\right)
  \one_{B(n)}(\xi)\right] \\
  &=\frac{\lambda}{2}\E\left[\frac{2}{\lambda}
  \left(\int_0^1\int_0^1 s\xi^T\big(D^2 f(x+rs\sqrt{h}\xi)-D^2 f(x)\big)\xi\,\d r\,\d s\right)
  \one_{B(n)}(\xi)\right] \\
  &^\leq\frac{\lambda}{2}\E\left[\frac{2}{\lambda}
   \left(\int_0^1 s|\xi|^2\frac{\epsilon}{2n^2}\,\d s\right)\one_{B(n)}(\xi)\right]
   \leq\frac{\epsilon}{4}. 
 \end{align*} 
 Hence, for every $\epsilon>0$ and $K\Subset\R^d$, there exists $h_0>0$ with
 \[ \left(\frac{I(h)f-f}{h}\right)(x)-\E\left[\frac{1}{2}\xi^T D^2 f(x)\xi\right]\leq\epsilon \]
 for all $x\in K$ and $h\in (0,h_0]$. The lower bound follows by similar arguments. 
 Furthermore,
 \[ \|I(t)f-f\|_\infty\leq\E\left[\frac{1}{2}\|D^2 f\|_\infty |\xi|^2\right]t 
 	\quad\mbox{for all } t\geq 0. \]
 This shows that Assumption~\ref{ass:cher}(v) is satisfied. Now, the first part of the
 claim follows from Theorem~\ref{thm:cher} and the second part follows similarly
 to the proof of Theorem~\ref{thm:LLN}.
\end{proof}

Similar to Theorem~\ref{thm:LLN}, for a sublinear expectation $\E[\,\cdot\,]$,
the computation of the generator simplifies but this does not affect the result.
Furthermore, in the sublinear case, the semigroup $(S(t))_{t\geq 0}$ is the unique
viscosity solution of the PDE $u_t=G(D^2 u)$ with $G(a):=\E[\frac{1}{2}\xi^T a\xi]$
and therefore Theorem~\ref{thm:CLT} is consistent with previous results in that direction.
If we weaken the condition $\E[a\xi]=0$ for all $a\in\R^d$ by merely requiring
\begin{equation} \label{eq:mean}
 \E[a\xi]\geq 0 \quad\mbox{for all } a\in\R^d,
\end{equation}
one can still apply the previous result on the transformed expectation
\begin{equation} \label{eq:mean2}
 \tilde{\E}\colon\H\to\R,\; X\mapsto\inf_{a\in\R^d}\E[X+a\xi]. 
\end{equation}
In the particular case $\E[f]=\sup_{\mu\in M}\int_{\R^d} f(x)\,\mu(\d x)$ for a set $M$ 
of probability measures on $(\R^d,\B(\R^d))$, condition~\eqref{eq:mean} is satisfied if 
$\int_{\R^d} x\,\mu(\d x)=0$ for some $\mu\in M$.

\begin{corollary}
 Assume that $\E[a\xi]\geq 0$ for all $a\in\R^d$. Define $I(0):=\id_{\Cb}$ and
 \[ (I(t)f)(x):=\inf_{a\in\R^d}t\E\left[\frac{1}{t}f(x+\sqrt{t}\xi)+a\xi\right] \]
 for all $t>0$, $f\in\Cb$ and $x\in\R^d$. Then, there exists a strongly continuous 
 convex monotone semigroup $(S(t))_{t\geq 0}$ on $\Cb$ with
 \[ S(t)f=\lim_{n\to\infty}I\big(\tfrac{t}{n}\big)^n f\in\Cb
 	\quad\mbox{for all } t\geq 0 \mbox{ and } f\in\Cb \]
 which has the properties~(i)-(v) from Theorem~\ref{thm:cher} with $\omega=L=0$. 
 For every $f\in\Cb^2$ and $x\in\R^d$, it holds that $f\in D(A)$ and 
 \[ (Af)(x)=\inf_{a\in\R^d}\E\left[\frac{1}{2}\big(\xi^T D^2f(x)\xi+a\xi\big)\right]. \]
\end{corollary}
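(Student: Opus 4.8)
The plan is to recognise that the family $(I(t))_{t\geq 0}$ in the statement is exactly the generating family of Theorem~\ref{thm:CLT} associated to the transformed expectation $\tilde{\E}$ from~\eqref{eq:mean2}, and then to apply Theorem~\ref{thm:CLT} to $\tilde{\E}$. Since $t>0$, the infimum may be pulled out of the rescaling, so that
\[ (I(t)f)(x)=\inf_{a\in\R^d}t\,\E\Big[\tfrac1t f(x+\sqrt{t}\xi)+a\xi\Big]
 =t\,\tilde{\E}\Big[\tfrac1t f(x+\sqrt{t}\xi)\Big]. \]
Hence it suffices to check that $\tilde{\E}\colon\Ck\to\R$ is a well-defined convex expectation which is continuous from above and satisfies $\tilde{\E}[a\xi]=0$ for all $a\in\R^d$; the convergence, the properties~(i)--(v) and the form of the generator are then inherited verbatim from Theorem~\ref{thm:CLT}. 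For the centering condition I would reparametrise the infimum: $\tilde{\E}[a\xi]=\inf_{b\in\R^d}\E[(a+b)\xi]=\inf_{c\in\R^d}\E[c\xi]=0$, where the last equality uses the hypothesis $\E[c\xi]\geq 0$ together with $\E[0]=0$.

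Next I would verify that $\tilde{\E}$ is a convex expectation. Monotonicity is immediate, since $X\leq Y$ gives $X+a\xi\leq Y+a\xi$, hence $\E[X+a\xi]\leq\E[Y+a\xi]$ for every $a$. Convexity follows from the splitting $a=\lambda a_1+(1-\lambda)a_2$: by convexity of $\E$,
\[ \E[\lambda X+(1-\lambda)Y+a\xi]\leq\lambda\E[X+a_1\xi]+(1-\lambda)\E[Y+a_2\xi], \]
and taking the infimum over $a_1,a_2$ independently yields $\tilde{\E}[\lambda X+(1-\lambda)Y]\leq\lambda\tilde{\E}[X]+(1-\lambda)\tilde{\E}[Y]$. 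For the normalisation $\tilde{\E}[c]=c$ I would invoke the dual representation $\E[f]=\sup_{\nu\in\p}(\int f\,\d\nu-\alpha(\nu))$ over probability measures that is already used in the proof of Theorem~\ref{thm:cramer2}: as $\int c\,\d\nu=c$, this gives the cash-invariance $\E[c+a\xi]=c+\E[a\xi]$, whence $\tilde{\E}[c]=c+\inf_{a}\E[a\xi]=c$.

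The step I expect to be the genuine obstacle is the well-definedness of $\tilde{\E}$, i.e.\ that the infimum over the noncompact set $\R^d$ is not $-\infty$. The upper bound $\tilde{\E}[X]\leq\E[X]<\infty$ is trivial (take $a=0$). For a uniform lower bound I would use that $X\in\Ck$ has at most quadratic growth, so $X\geq -c(1+|\xi|^2)$ for some $c\geq 0$; by monotonicity and cash-invariance $\E[X+a\xi]\geq\E[a\xi-c|\xi|^2]-c$. Writing $a\xi=\tfrac12(2a\xi-2c|\xi|^2)+\tfrac12(2c|\xi|^2)$ and using convexity together with $\E[a\xi]\geq 0$ gives $\E[2a\xi-2c|\xi|^2]\geq 2\E[a\xi]-\E[2c|\xi|^2]\geq-\E[2c|\xi|^2]$; after replacing $a$ by $a/2$ and using monotonicity one obtains $\inf_{a}\E[a\xi-c|\xi|^2]\geq-\E[2c|\xi|^2]>-\infty$, the finiteness of $\E[2c|\xi|^2]$ holding because $|\xi|^2\in\Ck$ for the quadratic weight $\kappa$. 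Continuity from above is then easy: for $f_n\downarrow 0$ in $\Ck$ one has $0=\tilde{\E}[0]\leq\tilde{\E}[f_n]\leq\E[f_n]\downarrow 0$, so $\tilde{\E}[f_n]\downarrow 0$.

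With $\tilde{\E}$ established as a convex expectation continuous from above with $\tilde{\E}[a\xi]=0$, Theorem~\ref{thm:CLT} applied to $\tilde{\E}$ delivers the semigroup and all the claimed properties with $\omega=L=0$, the generator being $(Af)(x)=\tilde{\E}[\tfrac12\xi^T D^2f(x)\xi]$. Finally I would note that this coincides with the stated expression, since reparametrising the infimum absorbs the extra factor $\tfrac12$ on the linear term:
\[ \tilde{\E}\big[\tfrac12\xi^T D^2f(x)\xi\big]
 =\inf_{a\in\R^d}\E\big[\tfrac12\xi^T D^2f(x)\xi+a\xi\big]
 =\inf_{a\in\R^d}\E\big[\tfrac12\big(\xi^T D^2f(x)\xi+a\xi\big)\big]. \]
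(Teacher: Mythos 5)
Your proposal is correct and follows exactly the paper's route: the paper's proof consists of citing Lemma~\ref{lem:mean} (which asserts precisely that $\tilde{\E}$ from~\eqref{eq:mean2} is a convex expectation, continuous from above, with $\tilde{\E}[a\xi]=0$) and then applying Theorem~\ref{thm:CLT}, so your argument simply re-proves that lemma inline --- including the well-definedness of the infimum, which the paper leaves implicit, and the reparametrisation absorbing the factor $\tfrac12$ in the stated generator. The only cosmetic remark is that your detour through a dual representation for the normalisation $\tilde{\E}[c]=c$ is unnecessary, since Lemma~\ref{lem:E}(i) already yields $\E[c+a\xi]=c+\E[a\xi]$ directly.
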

\begin{proof}
 By Lemma~\ref{lem:mean}, the convex expectation $\tilde{\E}$ defined by 
 equation~\eqref{eq:mean2} satisfies $\tilde{\E}[a\xi]=0$ for all $a\in\R^d$. Hence, 
 we can apply Theorem~\ref{thm:CLT} to obtain the claim. 
\end{proof}

\subsection{Uncertain samples in Wasserstein space}
\label{sec:Wasser2}

Similar to Subsection~\ref{sec:Wasser}, we consider convex expectations that 
are defined as a supremum over a set of probability measures but some natural 
modifications of the setting are necessary. To ensure that the convex expectation
is continuous from above on functions with quadratic growth at infinity, let $p>2$ 
and $\phi\colon [0,\infty]\to [0,\infty]$ be a non-decreasing function with $\phi(0)=0$,
$\phi(\infty)=\infty$ and $\lim_{c\to\infty}\nicefrac{\phi(c)}{c^2}=\infty$. Moreover,
we fix a reference measure $\mu\in\p_p$ with mean zero, i.e., $\int_{\R^d}x\,\mu(\d x)=0$. 
In view of Lemma~\ref{lem:mean} it now seems natural to define
\[ \tilde{\E}\colon {\rm C}_{\tilde{\kappa}}\to\R,\; f\mapsto\inf_{a\in\R^d}\sup_{\nu\in\p_p}
	\left(\int_{\R^d}f(x)+ax\,\nu(\d x)-\phi(\W_p(\mu,\nu))\right). \]
Using~\cite[Theorem~2]{Fan} to interchange the supremum with the infimum, one 
can show that 
\[ \tilde{\E}[f]=\sup_{\nu\in\p_p^0}\left(\int_{\R^d}f(x)\,\nu(\d x)-\phi(\W_p(\mu,\nu))\right)
	\quad\mbox{for all } f\in {\rm C}_{\tilde{\kappa}}, \]
where
\[ \p_p^0:=\left\{\nu\in\p_p\colon\int_{\R^d} x\,\nu(\d x)=0\right\}. \]
Furthermore, Theorem~\ref{thm:CLT} yields a corresponding semigroup $(S(t))_{t\geq 0}$
with generator 
\[ (Af)(x)=\E\left[\frac{1}{2}\xi^T D^2 f(x)\xi\right]
	\quad\mbox{for all } f\in\Cb^2 \mbox{ and } x\in\R^d. \]
However, if we want to give an explicit formula for the generator, i.e., the generator should 
be a given as a supremum over a set of parameters in $\R^d$ rather than a set of measures, 
it seems necessary to replace the Wasserstein distance $\W_p(\mu,\nu)$ by a transport
cost which is given as the infimum over a smaller set of couplings. One possible natural 
choice are martingale couplings~\cite{BHLP,BJ}. We call $\pi\in\Pi(\mu,\nu)$ a martingale 
coupling between $\mu$ and $\nu$ if there exist random variables $X,Y$ on a probability 
space $(\Omega,\F,\P)$ with $\pi=\P\circ (X,Y)^{-1}$ and $\e[Y|X]=X$. Equivalently, we 
could require that
\begin{equation} \label{eq:mart}
 \int_{\R^d} f(x)(y-x)\,\pi(\d x,\d y)=0 \quad\mbox{for all } f\in\Cb.
\end{equation}
Denoting by $\Pi_M(\mu,\nu)$ the set of of all martingale couplings between $\mu$ and $\nu$,
it follows from Strassen's theorem that $\Pi_M(\mu,\nu)\neq\emptyset$ if and only if $\mu\leq\nu$
in convex order. Moreover, we define the corresponding transport cost by
\[ C_M(\mu,\nu)
 	:=\left(\inf_{\pi\in\Pi_M(\mu,\nu)}\int_{\R^d\times\R^d}|x-y|^p\,\pi(\d x,\d y)\right)^\frac{1}{p}, \]
where $\inf\emptyset=\infty$. The convex expectation is then defined by
\begin{equation} \label{eq:Wasser2.E}
 \E\colon {\rm C}_{\tilde{\kappa}}\to\R,\; f\mapsto\sup_{\nu\in\p_p^0}
 \left(\int_{\R^d}f(x)\,\nu(\d x)-\phi(C_M(\mu,\nu))\right). 
\end{equation}
As before, we define $I(0):=\id_{\Cb}$ and, for every $t>0$, $f\in\Cb$ and $x\in\R^d$, 
\[ (I(t)f)(x):=t\E\left[\frac{1}{t}f(x+\sqrt{t}\xi)\right], 
	\quad\mbox{where } \xi:=\id_{\R^d}. \]
Similar to Subsection~\ref{sec:Wasser}, one can show that
\[ (I(t)f)(x)=\sup_{\nu\in\p_p^0}
	\left(\int_{\R^d} f(x+y)\,\nu(\d y)-t\phi\left(\frac{C_M(\mu_t,\nu)}{\sqrt{t}}\right)\right) \]
for all $t>0$, $f\in\Cb$ and $x\in\R^d$, where $\mu_t:=\mu\circ(\sqrt{t}\xi)^{-1}$. Let 
$\tr(a):=\sum_{i=1}^n a_{ii}$ be the matrix trace for all $a\in\R^{d\times d}$. Note that 
$\tr(\lambda\lambda^T a)=\lambda^T a\lambda$ and $\lambda\lambda^T\in\S_d^+$ 
for all $\lambda\in\R^d$ and $a\in\R^{d\times d}$, where $\S_d^+$ consists of all 
positive semi-definite symmetric $d\times d$-matrices. The proof of the following 
theorem is omitted since it is almost identical with the one of Theorem~\ref{thm:Wasser3} 
below. For details, we refer to discussion after Theorem~\ref{thm:Wasser3}.
In Theorem~\ref{thm:Wasser2} and Theorem~\ref{thm:Wasser3}, choosing $d=1$, $p,q\in (1,\infty)$ 
with $1/p+1/q=1$, $\varphi(c):=c^{2p}/p$ and $\Sigma:=1$ yields the generator 
$(Af)(x)=\frac{1}{2}f''(x)+(f''(x)^+)^q/q $ for all $f\in\Cb^2$ and $x\in\R$.

\begin{theorem} \label{thm:Wasser2}
 The functional $\E$ defined by equation~\eqref{eq:Wasser2.E} is a convex expectation which 
 is continuous from above and satisfies $\E[a\xi]=0$ for all $a\in\R^d$. Hence, there exists 
 a strongly continuous convex monotone semigroup $(S(t))_{t\geq 0}$ on $\Cb$ with
 \[ S(t)f=\lim_{n\to\infty}I\big(\tfrac{t}{n}\big)^n f\in\Cb
 	\quad\mbox{for all } t\geq 0 \mbox{ and } f\in\Cb \]
 which has the properties~(i)-(v) from Theorem~\ref{thm:cher} with $\omega=L=0$.
 For every $f\in\Cb^2$ and $x\in\R^d$, it holds that $f\in D(A)$ and 
 \[ (Af)(x)=\sup_{\lambda\in\R^d}
 	\left(\frac{1}{2}\tr(\lambda\lambda^T D^2f(x))-\phi(|\lambda|)\right)
 	+\frac{1}{2}\tr(\Sigma D^2f(x)), \]
 where $\Sigma:=\int_{\R^d} yy^T\,\mu(\d y)\in\S_d^+$. Moreover, for every convex 
 expectation space $(\Omega,\H,\bar{\E})$ and iid sequence $(\xi_n)_{n\in\N}\subset\H^d$ 
 with $\bar{\E}[f(\xi_n)]=\E[f]$ for all $f\in\Cb$, 
 \[ (S(1)f)(0)=\lim_{n\to\infty}\frac{1}{n}\bar{\E}
 	\left[nf\left(\frac{1}{\sqrt{n}}\sum_{i=1}^n \xi_i\right)\right]
 	\quad\mbox{for all } f\in\Cb. \]
\end{theorem}

It turns out that, instead of the martingale constraint, it is sufficient to require that the 
couplings used to define the transport cost satisfy the condition
\begin{equation} \label{eq:zero}
 \int_{\R^d\times\R^d} x^T  a(y-x)\,\pi(\d x,\d y)=0 \mbox{ for all } a\in\S_d, 
\end{equation}
where $\S_d$ denotes the set of all symmetric $d\times d$-matrices. We denote 
the set of all couplings $\pi\in\Pi(\mu,\nu)$ satisfying equation~\eqref{eq:zero} by
$\Pi_0(\mu,\nu)$ and define the corresponding transport cost by
\[ C_0(\mu,\nu):=\left(\inf_{\pi\in\Pi_0(\mu,\nu)}
	\int_{\R^d\times\R^d}|x-y|^p\,\pi(\d x,\d y)\right)^\frac{1}{p}, \]
where $\inf\emptyset=\infty$. The convex expectation is then defined by 
\begin{equation} \label{eq:Wasser3.E}
 \E\colon {\rm C}_{\tilde{\kappa}}\to\R,\; f\mapsto\sup_{\nu\in\p_p^0}
 \left(\int_{\R^d}f(x)\,\nu(\d x)-\phi(C_0(\mu,\nu))\right).
\end{equation}
Let $I(0):=\id_{\Cb}$ and, for every $t>0$, $f\in\Cb$ and $x\in\R^d$, 
\[ (I(t)f)(x):=t\E\left[\frac{1}{t}f(x+\sqrt{t}\xi)\right], 
	\quad\mbox{where } \xi:=\id_{\R^d}. \]
Again, one can show that
\[ (I(t)f)(x)=\sup_{\nu\in\p_p^0}
	\left(\int_{\R^d} f(x+y)\,\nu(\d y)-t\phi\left(\frac{C_0(\mu_t,\nu)}{\sqrt{t}}\right)\right), \]
where $\mu_t:=\mu\circ(\sqrt{t}\xi)^{-1}$. This setting leads to the following result.

\begin{theorem} \label{thm:Wasser3}
 The functional $\E$ defined by equation~\eqref{eq:Wasser3.E} is a convex expectation 
 which is continuous from above and satisfies $\E[a\xi]=0$ for all $a\in\R^d$. Hence, 
 there exists a strongly continuous convex monotone semigroup $(S(t))_{t\geq 0}$ on $\Cb$ with
 \[ S(t)f=\lim_{n\to\infty}I\big(\tfrac{t}{n}\big)^n f\in\Cb
 	\quad\mbox{for all } t\geq 0 \mbox{ and } f\in\Cb \]
 which has the properties~(i)-(v) from Theorem~\ref{thm:cher} with $\omega=L=0$.
 For every $f\in\Cb^2$ and $x\in\R^d$, it holds that $f\in D(A)$ and 
 \[ (Af)(x)=\sup_{\lambda\in\R^d}
 	\left(\frac{1}{2}\tr(\lambda\lambda^T D^2f(x))-\phi(|\lambda|)\right)
 	+\frac{1}{2}\tr(\Sigma D^2f(x)), \]
 where $\Sigma:=\int_{\R^d} yy^T\,\mu(\d y)\in\S_d^+$. Moreover, for every convex 
 expectation space $(\Omega,\H,\bar{\E})$ and iid sequence $(\xi_n)_{n\in\N}\subset\H^d$ 
 with $\bar{\E}[f(\xi_n)]=\E[f]$ for all $f\in\Cb$, 
 \[ (S(1)f)(0)=\lim_{n\to\infty}\frac{1}{n}\bar{\E}
 	\left[nf\left(\frac{1}{\sqrt{n}}\sum_{i=1}^n \xi_i\right)\right]
 	\quad\mbox{for all } f\in\Cb. \]
\end{theorem}
\begin{proof} 
 First, we show that $\E$ is a convex expectation which is continuous from above
 and satisfies $\E[a\xi]=0$ for all $a\in\R^d$. Let $f\in {\rm C}_{\tilde{\kappa}}$ 
 and choose $c\geq 0$ with $|f(x)|\leq c(1+|x|^2)$ for all $x\in\R^d$. For every 
 $\nu\in\p_p^0$, it follows from equation~\eqref{eq:zero} that
 \begin{align*}
  \int_{\R^d}|y|^2\,\nu(\d y)-\int_{\R^d}|x|^2\,\mu(\d x)
  &=\int_{\R^d\times\R^d}|x-y|^2\,\pi(\d x,\d y)+2\int_{\R^d\times\R^d}x(y-x)\,\pi(\d x,\d y) \\
  &=\int_{\R^d\times\R^d}|x-y|^2\,\pi(\d x,\d y)
 \end{align*}
 for all $\pi\in\Pi_0(\mu,\nu)$ and therefore H\"older's inequality yields
 \[ \int_{\R^d}|y|^2\,\nu(\d y)-\int_{\R^d}|x|^2\,\mu(\d x)\leq C_0(\mu,\nu)^2. \]
 Hence, we can use $\lim_{c\to\infty}\nicefrac{\phi(c)}{c^2}=\infty$ to obtain 
 \begin{align*}
  \E[|f|]
  &\leq\sup_{\nu\in\p_p^0}\left(\int_{\R^d}c(1+|x|^2)\, \nu(\d x)-\phi(C_0(\mu,\nu))\right) \\
  &=\sup_{\nu\in\p_p^0}\left(\int_{\R^d}c(1+|x|^2)\, \mu(\d x)+\int_{\R^d}c|x|^2\, \nu(\d x)
  -\int_{\R^d}c|x|^2\, \mu(\d x)-\phi(C_0(\mu,\nu))\right) \\
  &\leq\int_{\R^d}c(1+|x|^2)\,\mu(\d x)
  +\sup_{\nu\in\p_p^0}\left(C_0(\mu,\nu)^2-\phi(C_0(\mu,\nu))\right) \\
  &\leq\int_{\R^d}c(1+|x|^2)\,\mu(\d x)
  +\sup_{\nu\in\p_p^0}\left(C_0(\mu,\nu)^2-\phi(C_0(\mu,\nu))\right)<\infty.
 \end{align*}  
 It follows from $\phi(0)=0$ that $\E[c]=c$ for all $c\in\R$. Moreover, the functional 
 $\E$ is clearly convex and monotone. For every $a\in\R^d$, we use the fact that 
 $\int_{\R^d}x\,\nu(\d x)=0$ for all $\nu\in\p_p^0$ and the condition $\phi(0)=0$ to
 conclude $\E[a\xi]=0$. Let $(f_n)_{n\in\N}\subset {\rm C}_{\tilde{\kappa}}$ be a sequence with 
 $f_n\downarrow 0$ and choose $c\geq 0$ with $f_1(x)\leq c(1+|x|^2)$ for all 
 $x\in\R^d$. Since 
 \[ \int_{\R^d} f_n(x)\,\nu(\d x)-\phi(C_0(\mu,\nu))
 	\leq\int_{\R^d}c(1+|x|^2)\,\mu(\d x)+C_0(\mu,\nu)^2-\phi(C_0(\mu,\nu)) \]
 for all $\nu\in\p_p^0$ and $\lim_{c\to\infty}\nicefrac{\phi(c)}{c^2}=\infty$, there exists 
 $R\geq 0$ with
 \[ \E[f_n]=\sup_{\nu\in M}\left(\int_{\R^d} f_n(x)\,\nu(\d x)-\phi(C_0(\mu,\nu))\right)
 	\leq\sup_{\nu\in M}\int_{\R^d} f_n(x)\,\nu(\d x), \]
 where $M:=\{\nu\in\p_p^0\colon C_0(\mu,\nu)\leq R\}$. For every $\epsilon>0$, we use 
 $\W_p(\mu,\nu)\leq C_0(\mu,\nu)$ and Lemma~\ref{lem:tightW} to choose $r\geq 0$ with 
 \[ \sup_{\nu\in M}\int_{B(r)^c}c(1+|x|^2)\,\nu(\d x)\leq\frac{\epsilon}{2}. \]
 Moreover, we can use Dini's theorem to choose $n_0\in\N$ with
 \[ \int_{\R^d} f_n(x)\,\nu(\d x)
 	\leq\int_{B(r)}f_n(x)\,\nu(\d x)+\int_{B(r)^c}c(1+|x|^2)\,\nu(\d x)\leq\epsilon \]
 for all $n\geq n_0$ and $\nu\in M$. We obtain $\E[f_n]\downarrow 0$ as $n\to\infty$. 
 Now, Theorem~\ref{thm:CLT} yields the existence of the semigroup $(S(t))_{t\geq 0}$.
 Furthermore, for every $f\in\Cb^2$ and $x\in\R^d$,
 \[ (Af)(x)=\E\left[\frac{1}{2}\xi^T D^2f(x)\xi\right]. \]
 
 Second, for every $f\in\Cb$ and $x\in\R^d$, we show that 
 \[ (Af)(x)=\sup_{\lambda\in\R^d}
 	\left(\frac{1}{2}\tr(\lambda\lambda^T D^2f(x))-\phi(|\lambda|)\right)
 	+\frac{1}{2}\tr(\Sigma D^2f(x)). \]
 For every $\nu\in\p_p^0$ with $c:=C_0(\mu,\nu)<\infty$ and $\pi\in\Pi_0(\mu,\nu)$, 
 inequality~\eqref{eq:zero} yields
 \begin{align*}
  &\frac{1}{2}\int_{\R^d}z^T D^2f(x)z\,\nu(\d z)-\phi(C_0(\mu,\nu)) \\
  &=\frac{1}{2}\int_{\R^d\times\R^d}z^T D^2f(x)z-y^T D^2f(x)y\,\pi(\d y,\d z)
  +\frac{1}{2}\int_{\R^d}y^T D^2f(x)y\,\mu(\d y)-\phi(c) \\
  &=\frac{1}{2}\int_{\R^d\times\R^d}(z-y)^T D^2f(x)(z-y)\,\pi(\d y,\d z)
  +\int_{\R^d\times\R^d}y^T D^2f(x)(z-y)\,\pi(\d y,\d z) \\
  &\quad\; +\frac{1}{2}\tr(\Sigma D^2f(x))-\phi(c) \\
  &\leq\frac{1}{2}\left(\int_{\R^d\times\R^d}|y-z|^2\,\pi(\d y,\d z)\right)
  \sup_{|\lambda|=1}\lambda^T D^2f(x)\lambda+\frac{1}{2}\tr(\Sigma D^2f(x))-\phi(c).
 \end{align*}
 We take the supremum over $\pi\in\Pi_0(\mu,\nu)$ to obtain
 \begin{align*}
  (Af)(x) &\leq\sup_{c\geq 0}\left(\frac{1}{2}c^2\sup_{|\lambda|=1}
  \tr(\lambda\lambda^T D^2f(x))-\phi(c)\right)+\frac{1}{2}\tr(\Sigma D^2f(x)) \\
  &=\sup_{\lambda\in\R^d}\left(\frac{1}{2}\tr(\lambda\lambda^T D^2f(x))-\phi(|\lambda|)\right)
  +\frac{1}{2}\tr(\Sigma D^2f(x)).
 \end{align*}
 In order to show the reverse inequality, let $\lambda\in\R^d$ and 
 $\nu:=\mu*\big(\frac{1}{2}\delta_\lambda+\frac{1}{2}\delta_{-\lambda}\big)\in\p_p^0$. 
 Furthermore, let $X,Z$ be independent random variables on a probability space 
 $(\Omega,\F,\P)$ with $\P\circ X^{-1}=\mu$ and 
 $\P\circ Z^{-1}=\frac{1}{2}\delta_\lambda+\frac{1}{2}\delta_{-\lambda}$. Define
 $Y:=X+Z$ and $\pi:=\P\circ (X,Y)^{-1}$. Clearly, it holds that $\pi\in\Pi_0(\mu,\nu)$ and
 \[ C_0(\mu,\nu)\leq\e[|X-Y|^p]^\frac{1}{p}=\e[|Z|^p]^\frac{1}{p}=|\lambda|. \]
 Hence, we can use the independence of $X$ and $Z$ to estimate
 \begin{align*}
  (Af)(x) &\geq\frac{1}{2}\int_{\R^d} y^T D^2f(x)y\,\nu(\d y)-\phi(C_0(\mu,\nu)) \\
  &\geq\frac{1}{2}\e[(X+Z)^T D^2f(x)(X+Z)]-\phi(|\lambda|) \\
  &=\frac{1}{2}\tr(\lambda\lambda^T D^2f(x))-\phi(|\lambda|)+\frac{1}{2}\tr(\Sigma D^2f(x)).
 \end{align*}
 Taking the the supremum over $\lambda\in\R^d$ yields
  \[ (Af)(x)\geq\sup_{\lambda\in\R^d}\left(\frac{1}{2}\tr(\lambda\lambda^T D^2f(x))-\phi(|\lambda|)\right)
 	+\frac{1}{2}\tr(\Sigma D^2f(x)). \qedhere \]
\end{proof}

We want to remark that Theorem~\ref{thm:Wasser2} can be obtained from the previous proof. 
Indeed, it follows from $\Pi_M(\mu,\nu)\subset\Pi_0(\mu,\nu)$ that 
$C_M(\mu,\nu)\geq C_0(\mu,\nu)$. Furthermore, the measure $\nu$ and coupling $\pi$
that we have chosen to show
\[ (Af)(x)\geq\sup_{\lambda\in\R^d}\left(\frac{1}{2}\tr(\lambda\lambda^T D^2f(x))-\phi(|\lambda|)\right)
 	+\frac{1}{2}\tr(\Sigma D^2f(x)) \]
satisfy $\pi\in\Pi_M(\mu,\nu)$ and $C_M(\mu,\nu)\leq |\lambda|$. Corollary~\ref{cor:IJ} implies
that the semigroups from Theorem~\ref{thm:Wasser2} and Theorem~\ref{thm:Wasser3} coincide. 
We conclude this subsection by showing that the semigroup from Theorem~\ref{thm:Wasser3} 
corresponding to a supremum over an infinite dimensional set of arbitrary distributions can be 
approximated by random walks. To do so, we define the convex expectation
\[ \tilde{\E}\colon {\rm C}_{\tilde{\kappa}}\to\R,\; f\mapsto\sup_{\lambda\in\R^d}
	\left(\frac{1}{2}\int_{\R^d} f(x+\lambda)+f(x-\lambda)\,\mu(\d x)-\phi(|\lambda|)\right). \]
Moreover, we define $J(0):=\id_{\Cb}$ and, for every $t>0$, $f\in\Cb$ and $x\in\R^d$, 
\[ (J(t)f)(x):=t\tilde{\E}\left[\frac{1}{t}f(x+\sqrt{t}\xi)\right]. \]

\begin{corollary}
 Denoting by $(S(t))_{t\geq 0}$ the semigroup from Theorem~\ref{thm:Wasser3},
 we have 
 \[ S(t)f=\lim_{n\to\infty}J\big(\tfrac{t}{n}\big)^n f
 	\quad\mbox{for all } t\geq 0 \mbox{ and } f\in\Cb. \]
\end{corollary} 
\begin{proof}
 As seen during the proof of Theorem~\ref{thm:Wasser3}, it holds that 
 \begin{align*}
  \tilde{\E}[f] 
  &=\sup_{\lambda\in\R^d}\left(\int_{\R^d} f(x)\,\nu_\lambda(\d x)-\phi(|\lambda|)\right) \\
  &\leq\sup_{\lambda\in\R^d}\left(\int_{\R^d} f(x)\,\nu_\lambda(\d x)-\phi(C_0(\mu,\nu_\lambda))\right)
  \leq\E[f],
 \end{align*}
 where $\nu_\lambda:=\mu*\big(\frac{1}{2}\delta_{-\lambda}+\frac{1}{2}\delta_\lambda\big)$. 
 Hence, the functional $\tilde{\E}\colon {\rm C}_{\tilde{\kappa}}\to\R$ is a well-defined convex expectation which is 
 continuous from above and satisfies $\tilde{\E}[a\xi]=0$ for all $a\in\R^d$. By Theorem~\ref{thm:CLT}, 
 there exists a semigroup $(T(t))_{t\geq 0}$ on ${\rm C}_{\tilde{\kappa}}$ with 
 \[ T(t)f=\lim_{n\to\infty}J(\pi_n^t)f 
 	\quad\mbox{for all } t\geq 0 \mbox{ and } f\in\Cb \]
 and generator $(Bf)(x)=\tilde{\E}[\frac{1}{2}\xi^T D^2f(x)\xi]$ for all $f\in\Cb^2$ and 
 $x\in\R^d$. For every $f\in\Cb^2$ and $x\in\R^d$, it follows from Theorem~\ref{thm:Wasser3} 
 and a straightforward computation that
 \[ (Af)(x)=\sup_{\lambda\in\R^d}\left(\frac{1}{2}\tr(\lambda\lambda^T D^2f(x))-\phi(|\lambda|)\right)
 	+\tr(\Sigma D^2f(x))=(Bf)(x). \]
 Hence, Corollary~\ref{cor:IJ} implies $S(t)f=T(t)f$ for all $t\geq 0$ and $f\in\Cb$.
\end{proof}

\section{Conclusion}

Based on the abstract results in~\cite{BDKN}, we have derived explicit and verifiable
conditions under which strongly continuous convex monotone semigroups are uniquely determined by
their generators evaluated at smooth functions and can be constructed as the limit of
iterative approximation schemes. Hence, we are able to extend previous works in this 
direction and can provide a semigroup approach to HJB equations which is independent from 
the theory of viscosity solutions. In the application to limit theorems we are, to the best
of our knowledge, the first ones to work with convex rather than sublinear expectations. 
While this does not affect the proofs, apart from sometimes longer estimates, it opens
the door to new applications in this context such as large deviations theory. A future 
potential application, where it is crucial to replace sublinearity by convexity, are 
transitions from discrete to continuous models in the context of mathematical finance.
In the sequel, we briefly discuss some further questions. Extending the results in
Section~\ref{sec:LLN} and Section~\ref{sec:CLT} to $\alpha$-stable distributions,
as it has been achieved in~\cite{BM,HJLP} for sublinear expectations, should be 
attainable with the tools developed in this paper. Furthermore, the corresponding 
convergence rates for LLN and CLT type results have been established in~\cite{BJKL} 
and are consistent with the ones in~\cite{HJL21, HL20, Krylov, Song}. While the results 
in this article are the same whether we consider sublinear or convex expectations and 
semigroups, the convergence rates in~\cite{BJKL} strongly depend on how nonlinear the 
expectations and semigroups are. 
Finally, it would be desirable to have an in-depth comparison between the established
viscosity approach to HJB equations and the novel semigroup approach which provides a 
comparison principle and stability results that are very similar to the well-known 
properties of viscosity solutions, see~\cite{BKN}.

\appendix

\section{A basic convexity estimate}

\begin{lemma} \label{lem:lambda}
 Let $X$ be a vector space and $\phi\colon X\to\R$ be a convex functional. Then, 
 \[ \phi(x)-\phi(y)\leq\lambda\left(\phi\left(\frac{x-y}{\lambda}+y\right)-\phi(y)\right)
 	\quad\mbox{for all } x,y\in X \mbox{ and } \lambda\in (0,1]. \]
\end{lemma}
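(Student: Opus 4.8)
The plan is to realize $x$ as a convex combination of $y$ and the auxiliary point $z:=\frac{x-y}{\lambda}+y$, after which the estimate follows from a single application of convexity. First I would note that since $\lambda\in(0,1]$, the weights $\lambda$ and $1-\lambda$ are both nonnegative and sum to one, so that $\lambda z+(1-\lambda)y$ is a genuine convex combination of two points of $X$; here the hypothesis $\lambda>0$ is precisely what allows the quotient defining $z$ to be formed.

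The crucial observation is the algebraic identity $\lambda z+(1-\lambda)y=x$, which is immediate upon expanding $\lambda z=(x-y)+\lambda y$ and cancelling. I would then invoke the convexity of $\phi$ to obtain
\[ \phi(x)=\phi\big(\lambda z+(1-\lambda)y\big)\leq\lambda\phi(z)+(1-\lambda)\phi(y). \]
Subtracting $\phi(y)$ from both sides and factoring out $\lambda$ rearranges this into $\phi(x)-\phi(y)\leq\lambda\big(\phi(z)-\phi(y)\big)$, which is exactly the claimed inequality once $z$ is written back as $\tfrac{x-y}{\lambda}+y$.

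There is essentially no obstacle here: the statement is merely a reformulation of the defining inequality for convexity, and it is purely algebraic, requiring no topological structure on $X$. The only place where the restriction $\lambda\in(0,1]$ (as opposed to $\lambda\in(0,\infty)$) is used is in ensuring that the representation of $x$ is a \emph{convex} combination, so that convexity may be applied. The boundary case $\lambda=1$ simply gives $z=x$ and turns the inequality into an equality, so it needs no separate treatment.
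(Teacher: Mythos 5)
Your proposal is correct and coincides with the paper's own proof: both write $x=\lambda\left(\frac{x-y}{\lambda}+y\right)+(1-\lambda)y$ and apply the defining inequality of convexity once, then rearrange. Nothing is missing.
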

\begin{proof}
 We use the convexity to estimate
 \begin{align*}
  \phi(x)-\phi(y)
  &=\phi\left(\lambda\left(\frac{x-y}{\lambda}+y\right)+(1-\lambda)y\right)-\phi(y) \\
  &\leq\lambda\phi\left(\frac{x-y}{\lambda}+y\right)+(1-\lambda)\phi(y)-\phi(y) \\
  &=\lambda\left(\phi\left(\frac{x-y}{\lambda}+y\right)-\phi(y)\right)
 \end{align*}
 for all $x,y\in X$ and $\lambda\in (0,1]$. 
\end{proof}

\section{Convex expectation spaces} \label{app:E}

Nonlinear expectations were introduced by Peng, see~\cite{Peng19} for a detailed 
discussion, and are a closely related to several other concepts. For instance, sublinear 
expectations are called upper expectation in robust statistics~\cite{Huber}, upper coherent 
prevision in the theory of imprecise probabilities~\cite{Walley} and coherent risk 
measure~\cite{ADEH} in mathematical finance. Moreover, a convex expectation coincides 
(up to the sign) with the notion of a convex risk measure~\cite{FS02,FG}. In the sequel,
we mainly follow~\cite[Chapter~1]{Peng19} up to some direct transfers from the sublinear 
to the convex case. All inequalities and the monotone convergence in the following
definition are understood pointwise.

\begin{definition}
 Let $\Omega$ be a set and $\H$ a linear space of functions $X\colon\Omega\to\R$
 with $c\in\H$ and $|X|\in\H$ for all $c\in\R$ and $X\in\H$.\footnote{We identify $c$ 
 with the constant function $c\one_\Omega$.} A convex expectation is a functional
 $\E\colon\H\to\R$ with
 \begin{itemize}
 \item $\E[c]=c$ for all $c\in\R$, 
 \item $\E[X]\leq\E[Y]$ for all $X,Y\in\H$ with $X\leq Y$, 
 \item $\E[\lambda X+(1-\lambda)Y]\leq\lambda\E[X]+(1-\lambda)\E[Y]$ for all 
  $X,Y\in\H$ and $\lambda\in[0,1]$. 
 \end{itemize}
 The triplet $(\Omega,\H,\E)$ is called a convex expectation space. Furthermore, 
 we say that $\E$ is continuous from above if $\E[X_n]\downarrow 0$ for all 
 $(X_n)_{n\in\N}\subset\H$ with $X_n\downarrow 0$. If $\E[\lambda X]=\lambda\E[X]$ 
 for all $\lambda\geq 0$ and $X\in\H$ we say that $\E$ is sublinear and call
 $(\Omega,\H,\E)$ a sublinear expectation space.
\end{definition}

If $\E$ is continuous from above it follows from the convexity that $\E[X_n]\downarrow\E[X]$
for all $(X_n)_{n\in\N}\subset\H$ and $X\in\H$ with $X_n\downarrow X$. We collect some 
elementary properties of convex expectations.

\begin{lemma} \label{lem:E}
 For a convex expectation space $(\Omega,\H,\E)$ the following statements hold:
 \begin{itemize}
  \item[(i)] $\E[X+c]=\E[X]+c$ for all $X\in\H$ and $c\in\R$.
  \item[(ii)] $|\E[X]-\E[Y]|\leq\|X-Y\|_\infty$ for all bounded $X,Y\in\H$. 
  \item[(iii)] $\E[\lambda X]\leq\lambda\E[X]$ for all $\lambda\in [0,1]$ and $X\in\H$.
  \item[(iv)] $-\E[-X]\leq\E[X]$ for all $X\in\H$. 
  \item[(v)] $|\E[X]|\leq\E[|X|]$ for all $X\in\H$.
  \item[(vi)] Let $X\in\H$ with $\E[aX]=0$ for all $a\in\R$. Then, it holds that 
   \[ \E[X+Y]=\E[Y] \quad\mbox{for all } Y\in\H. \]
 \end{itemize}
\end{lemma}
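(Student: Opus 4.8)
The plan is to derive the six properties in sequence, so that each relies only on the defining axioms (normalization $\E[c]=c$, monotonicity, convexity) together with the items already established; the single genuinely new ingredient is a one-dimensional continuity argument that powers both the cash-additivity~(i) and the degeneracy statement~(vi). First I would dispatch the purely algebraic items. For~(iii), writing $\lambda X=\lambda X+(1-\lambda)\cdot 0$ and using convexity with $\E[0]=0$ gives $\E[\lambda X]\le\lambda\E[X]$ at once. For~(iv), the same trick applied to $0=\tfrac12 X+\tfrac12(-X)$ yields $0=\E[0]\le\tfrac12\E[X]+\tfrac12\E[-X]$, that is $-\E[-X]\le\E[X]$. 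Granting~(i), property~(ii) is a two-line consequence of monotonicity: from $X\le Y+\|X-Y\|_\infty$ and $Y\le X+\|X-Y\|_\infty$ one obtains $\E[X]-\E[Y]\le\|X-Y\|_\infty$ and its mirror image. Finally~(v) combines monotonicity ($X\le|X|$ and $-X\le|X|$) with~(iv): indeed $\E[X]\le\E[|X|]$ and $-\E[X]\le\E[-X]\le\E[|X|]$.

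The crucial observation, shared by~(i) and~(vi), is that for fixed $Z\in\H$ the map $\R\to\R,\ \mu\mapsto\E[\mu Z]$ is a \emph{finite} convex function: it is the composition of the convex functional $\E$ with the affine map $\mu\mapsto\mu Z$, and it is real-valued because $\mu Z\in\H$ (as $\H$ is linear) and $\E$ takes values in $\R$. A finite convex function on $\R$ is continuous, so in particular $s\E[Z/s]\to\E[Z]$ as $s\to 1$. For~(i) I would decompose, for $\lambda\in(0,1)$,
\[ X+c=\lambda\cdot\tfrac{X}{\lambda}+(1-\lambda)\cdot\tfrac{c}{1-\lambda}, \]
and apply convexity together with $\E[c/(1-\lambda)]=c/(1-\lambda)$ to get $\E[X+c]\le\lambda\E[X/\lambda]+c$; letting $\lambda\to 1^-$ and invoking the continuity above gives $\E[X+c]\le\E[X]+c$. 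Applying this inequality to the pair $(X+c,-c)$ reverses it, yielding the equality in~(i).

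The main point is~(vi), which I would prove by exactly the same mechanism, now exploiting the hypothesis $\E[aX]=0$ to annihilate one of the two terms in the decomposition. For $\lambda\in(0,1)$ write
\[ X+Y=\lambda\cdot\tfrac{X}{\lambda}+(1-\lambda)\cdot\tfrac{Y}{1-\lambda}; \]
convexity gives $\E[X+Y]\le\lambda\E[X/\lambda]+(1-\lambda)\E[Y/(1-\lambda)]$, and since $\E[X/\lambda]=\E[\tfrac1\lambda X]=0$ by assumption, this collapses to $\E[X+Y]\le(1-\lambda)\E[Y/(1-\lambda)]$. Letting $\lambda\to 0^+$ and using continuity of $s\mapsto s\E[Y/s]$ at $s=1$ yields $\E[X+Y]\le\E[Y]$. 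The reverse inequality follows by noting that $-X$ also satisfies the hypothesis (since $\E[a(-X)]=\E[(-a)X]=0$ for all $a\in\R$) and applying the bound just proved to the pair $(-X,\,X+Y)$, which gives $\E[Y]\le\E[X+Y]$.

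The only subtlety to watch, and the step I expect to be the real obstacle, is the passage to the limit in~(i) and~(vi): one must confirm that the relevant one-dimensional convex functions $\mu\mapsto\E[\mu Z]$ are genuinely finite on all of $\R$, so that their continuity — and hence the limits $\lambda\to 1^-$ and $\lambda\to 0^+$ — is legitimate. This is exactly what is guaranteed by $\E\colon\H\to\R$ together with $\H$ being a linear space closed under scalar multiples, and it is what makes cash-additivity a \emph{theorem} here rather than an axiom.
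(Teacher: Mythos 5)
Your proof is correct, and on items (ii)--(v) it coincides with the paper's argument essentially verbatim; the interesting comparison is in (i) and (vi). For (i), the paper runs the convex-combination estimate twice --- once for $\E[X+c]$ and once, symmetrically, for $\E[X]=\E[(X+c)-c]$ --- and then lets $\lambda\to 1$, invoking exactly your continuity input (a real-valued convex function $\lambda\mapsto\E[\lambda X]$ on $\R$ is continuous); your shortcut of applying the already-proved one-sided inequality to the pair $(X+c,-c)$ replaces the paper's second estimate and is a genuine, if small, simplification. For (vi) the routes diverge more substantially: the paper applies Lemma~\ref{lem:lambda} to get $\E[X+Y]-\E[Y]\leq\lambda\E[\frac{X}{\lambda}+Y]-\lambda\E[Y]$, then splits $\E[\frac{X}{\lambda}+Y]\leq\frac{1}{2}\E[\frac{2X}{\lambda}]+\frac{1}{2}\E[2Y]$ so that the hypothesis kills the $X$-term and the remaining error $\frac{\lambda}{2}\E[2Y]-\lambda\E[Y]$ vanishes explicitly as $\lambda\downarrow 0$, with no appeal to continuity of scalar convex functions; the reverse inequality is then obtained by a second, separate estimate of the same type. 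You instead reuse the convex split $X+Y=\lambda\cdot\frac{X}{\lambda}+(1-\lambda)\cdot\frac{Y}{1-\lambda}$ together with continuity of $s\mapsto s\,\E[Y/s]$ at $s=1$, and --- more notably --- you get the reverse inequality for free from the observation that $-X$ satisfies the same zero-mean hypothesis, applying the established bound to the pair $(-X,\,X+Y)$. This symmetry trick is not in the paper and halves the work in (vi) (incidentally sidestepping a sign slip in the paper's displayed lower-bound computation, where $-\frac{\lambda}{2}\E[2(X+Y)]$ appears with the wrong sign, harmlessly for the limit). In exchange, the paper's version of (vi) yields a quantitative $O(\lambda)$ bound where yours relies on a soft continuity argument; both are perfectly rigorous, since the finiteness of $\mu\mapsto\E[\mu Z]$ on all of $\R$ --- which you correctly flag as the crux --- is guaranteed by $\E\colon\H\to\R$ and the linearity of $\H$.
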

\begin{proof}
 \begin{itemize}
  \item[(i)] For every $X\in\H$, $c\in\R$ and $\lambda\in (0,1)$, we use that $\E$ is
   convex and preserves constants to estimate
   \[ \E[X+c] 
   	\leq\lambda\E\left[\frac{X}{\lambda}\right]+(1-\lambda)\E\left[\frac{c}{1-\lambda}\right]
    =\lambda\E\left[\frac{X}{\lambda}\right]+c \]
   and 
   \[ \E[X]=\E[X+c-c]
    \leq\lambda\E\left[\frac{X+c}{\lambda}\right]+(1-\lambda)\E\left[-\frac{c}{1-\lambda}\right]
    =\lambda\E\left[\frac{X+c}{\lambda}\right]-c. \]
   Since the real-valued mapping $\lambda\mapsto\E[\lambda X]$ is convex and therefore 
   continuous, we obtain in the limit $\lambda\to 1$ that $\E[X+c]=\E[X]+c$.  
  \item[(ii)] It follows from the monotonicity and part~(i) that 
   \[ \E[X]\leq\E[Y+\|X-Y\|_\infty]=\E[Y]+\|X-Y\|_\infty. \]
   Changing the roles of $X$ and $Y$ yields the claim.
  \item[(iii)] For every $\lambda\in [0,1]$ and $X\in\H$, 
   \[ \E[\lambda X]=\E[\lambda X+(1-\lambda)0]\leq\lambda\E[X]+(1-\lambda)\E[0]
   	=\lambda\E[X].\]
  \item[(iv)] It holds that $0=\E[0]=\E[\frac{1}{2}X+\frac{1}{2}(-X)]\leq\frac{1}{2}\E[X]+\frac{1}{2}\E[-X]$. 
  \item[(v)] We use the monotonicity and part~(iv) to estimate
   \[ -\E[|X|]\leq -\E[-X]\leq\E[X]\leq\E[|X|].\]
  \item[(vi)] For every $\lambda\in (0,1]$, it follows from Lemma~\ref{lem:lambda} that
   \begin{align*}
    \E[X+Y]-\E[Y] 
   	&\leq\lambda\E\left[\frac{X}{\lambda}+Y\right]-\lambda\E[Y] \\
   	&\leq\frac{\lambda}{2}\E\left[\frac{2X}{\lambda}\right]+\frac{\lambda}{2}\E[2Y]
   	-\lambda\E[Y] \\
   &=\frac{\lambda}{2}\E[2Y]-\lambda\E[Y] \to 0 \quad\mbox{as } \lambda\downarrow 0.
   \end{align*}
   Similarly one can show that
   \begin{align*}
     \E[X+Y]-\E[Y]
     &\geq -\frac{\lambda}{2}\E\left[-\frac{2X}{\lambda}\right]-\frac{\lambda}{2}\E[2(X+Y)]
     +\lambda\E[X+Y] \\
    &=\frac{\lambda}{2}\E[2(X+Y)]+\lambda\E[X+Y]\to 0 \quad\mbox{as } \lambda\downarrow 0. \qedhere
   \end{align*}
 \end{itemize}
\end{proof}

\begin{definition}
 Let $(\Omega,\H,\E)$ be a convex expectation space with $f(X_1,\ldots,X_n)\in\H$ 
 for all $n\in\N$, $f\in\Cb(\R^n)$ and $X\in\H^n$. Let $m,n\in\N$, $X\in\H^m$ and
 $Y\in\H^n$. 
 \begin{itemize}
  \item[(i)] The distribution of $X$ is given by the functional 
   \[ F_X\colon\Cb(\R^m)\to\R,\; f\mapsto \E[f(X)]. \]
  \item[(ii)] We say that $X$ and $Y$ are identically distributed if $m=n$ and
   \[ \E[f(X)]=\E[f(Y)] \quad\mbox{for all } f\in\Lipb(\R^m). \]
  \item[(iii)] We say that $Y$ is independent of $X$ if
   \[ \E[f(X,Y)]=\E\left[\E[f(x,Y)]\big|_{x=X}\right]
   	\quad\mbox{for all } f\in\Lipb(\R^m\times\R^n). \]
 \end{itemize}
\end{definition}

In general, the statements ``Y is independent of X'' and ``X is independent of Y''
are not equivalent, see~\cite[Example~1.3.15]{Peng19} for a counterexample.

\begin{definition}
 Let $(\Omega,\H,\E)$ be a convex expectation space and $(X_n)_{n\in\N}\subset\H^d$
 be a sequence of random vectors for some $d\in\N$. 
 \begin{itemize}
  \item[(i)] We say that $(X_n)_{n\in\N}$ is independent and identically distributed (iid)
   if $X_m$ and $X_n$ have the same distribution and $X_{n+1}$ is independent of 
   $(X_1,\ldots,X_n)$ for all $m,n\in\N$. 
  \item[(ii)] We say that $(X_n)_{n\in\N}$ converges in distribution if the sequence
   $(\E[f(X_n)])_{n\in\N}$ converges for all $f\in\Lipb(\R^d)$. 
 \end{itemize}
\end{definition}

Let $\Omega_\infty:=(\R^d)^\N$. For every $n\in\N$, we define
$\pi_n\colon\Omega_\infty\to (\R^d)^n,\; x\mapsto (x_1,\ldots,x_n)$ and  
$\H_n:=\{f\circ\pi_n\colon f\in\Lipb((\R^d)^n)\}$. Furthermore, let $\H_\infty:=\bigcup_{n\in\N}\H_n$. 
For a convex expectation $\E$ on $(\R^d,\Lipb(\R^d))$, we define recursively a sequence
of convex expectations $\E_n\colon\Lipb((\R^d)^n)\to\R$ by $\E_1:=\E$ and
\[ \E_{n+1}[f]:=\E_n\big[\E[f(x,Y)]\big|_{x=X}\big], \]
where $X:=\id_{(\R^d)^n}$ and $Y:=\id_{\R^d}$. Note that the function
$x\mapsto\E[f(x,Y)]$ is Lipschitz continuous, because Lemma~\ref{lem:E}(ii) implies
\[ |\E[f(x,\cdot)]-\E[f(y,\cdot)]|\leq\|f(x,\cdot)-f(y,\cdot)\|_\infty\leq r|x-y| \]
for all $r\geq 0$, $f\in\Lipb(r)$ and $x,y\in\R^d$. Furthermore, let
\[ \E_\infty\colon\H_\infty\to\R,\; f\circ\pi_n\mapsto\E_n[f]. \]
The next result is a direct transfer of~\cite[Proposition~1.3.17]{Peng19} from the sublinear
to the convex case.

\begin{lemma}
 Let $\E$ be a convex expectation on $(\R^d,\Lipb(\R^d))$ and define 
 $(\Omega_\infty,\H_\infty,\E_\infty)$ as above. Furthermore, let
 \[ \xi_n\colon\Omega_\infty\to\R, \; x\mapsto x_n \quad\mbox{for all } n\in\N. \]
 Then, the sequence $(\xi_n)_{n\in\N}$ is iid and satisfies $\E_\infty[f(\xi_n)]=\E[f]$ 
 for all $f\in\Lipb(\R^d)$. 
\end{lemma}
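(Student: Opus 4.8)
The plan is to follow the template of~\cite[Proposition~1.3.17]{Peng19}, carrying the three defining properties of a convex expectation through the recursion by hand. First I would check that $\E_\infty$ is well defined on $\H_\infty=\bigcup_{n\in\N}\H_n$. The family $(\H_n)_{n\in\N}$ is increasing, since a function $f\circ\pi_n$ with $f\in\Lipb((\R^d)^n)$ equals $\tilde f\circ\pi_{n+1}$ for $\tilde f(x_1,\ldots,x_{n+1}):=f(x_1,\ldots,x_n)\in\Lipb((\R^d)^{n+1})$, so one must verify that both representations yield the same value. As $\tilde f(x,y)=f(x)$ does not depend on the last coordinate and $\E$ preserves constants, the inner expectation collapses, $\E[\tilde f(x,Y)]=f(x)$, whence $\E_{n+1}[\tilde f]=\E_n[\E[\tilde f(x,Y)]|_{x=X}]=\E_n[f]$; this gives consistency and $\E_\infty$ is well defined. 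By induction on $n$ each $\E_n$ is a convex expectation: preservation of constants, monotonicity and convexity all pass through the composition $\E_{n+1}[f]=\E_n[\E[f(x,Y)]|_{x=X}]$, using that $x\mapsto\E[f(x,Y)]$ is itself bounded Lipschitz (as noted before the lemma) so that $\E_n$ may legitimately be applied to it.

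Next I would establish the normalisation $\E_\infty[f(\xi_n)]=\E[f]$, which also yields the identical distribution. For $f\in\Lipb(\R^d)$ the function $f(\xi_n)$ depends only on the $n$-th coordinate, so it equals $g\circ\pi_n$ with $g(x_1,\ldots,x_n):=f(x_n)$. Peeling off the last coordinate via the recursion, $g(x,y)=f(y)$ is independent of $x$, hence $\E[g(x,Y)]=\E[f]$ is constant and $\E_n[g]=\E_{n-1}[\E[f]]=\E[f]$ (for $n=1$ the claim is immediate). Thus $\E_\infty[f(\xi_n)]=\E[f]$ for every $n$, and in particular all $\xi_n$ are identically distributed.

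It then remains to prove that $\xi_{n+1}$ is independent of $(\xi_1,\ldots,\xi_n)$, i.e. that for every $f\in\Lipb((\R^d)^n\times\R^d)$ one has $\E_\infty[f(\xi_1,\ldots,\xi_{n+1})]=\E_\infty[\E_\infty[f(x,\xi_{n+1})]|_{x=(\xi_1,\ldots,\xi_n)}]$. Identifying $(\R^d)^n\times\R^d\cong(\R^d)^{n+1}$, the left-hand side is just $\E_{n+1}[f]=\E_n[\E[f(x,Y)]|_{x=X}]$ by definition. For the right-hand side I would first show, by the same collapsing argument, that for fixed $x\in(\R^d)^n$ the variable $f(x,\xi_{n+1})$ depends only on the last coordinate and satisfies $\E_\infty[f(x,\xi_{n+1})]=\E[f(x,Y)]$; substituting $x=(\xi_1,\ldots,\xi_n)$ turns the outer expectation into $\E_\infty[\varphi\circ\pi_n]=\E_n[\varphi]$ for $\varphi(x):=\E[f(x,Y)]$, which is exactly $\E_n[\E[f(x,Y)]|_{x=X}]$. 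Hence both sides coincide, and combined with the previous paragraph this shows that $(\xi_n)_{n\in\N}$ is iid.

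The computations are elementary; the only delicate point is the bookkeeping of the nested expectations, and in particular recognising that $\E_\infty$ applied to a function of a single coordinate reduces to $\E$ on $\R^d$. Conceptually the independence is built into the recursive definition of $\E_{n+1}$, so once well-definedness and the collapsing identity $\E_\infty[h(\xi_{n+1})]=\E[h]$ for coordinate functions are in place, both the iid property and the normalisation follow with essentially no further work.
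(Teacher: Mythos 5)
Your proof is correct and follows exactly the route the paper itself takes: the paper gives no written proof, describing the lemma as a direct transfer of Peng's Proposition~1.3.17 from the sublinear to the convex case, and your argument is precisely that template, with the three steps one expects — consistency of $\E_\infty$ across representations by collapsing the constant last coordinate, the normalisation $\E_\infty[f(\xi_n)]=\E[f]$ via the same collapsing, and independence of $\xi_{n+1}$ from $(\xi_1,\ldots,\xi_n)$ read off from the recursive definition of $\E_{n+1}$. The only ingredients where convexity (rather than sublinearity) matters — that $x\mapsto\E[f(x,Y)]$ is again bounded Lipschitz, and that each $\E_n$ inherits constant preservation, monotonicity and convexity by induction — you identify and handle correctly, in line with the remarks the paper places immediately before the lemma.
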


The next result is a direct application of~\cite[Theorem~4.6]{DKN2018}. Let
$\B^\N:=\bigotimes_{n\in\N}\B(\R)$ be the product-$\sigma$-algebra, where $\B(\R)$ 
denotes the Borel-$\sigma$-algebra, and define $\L^\infty(\Omega_\infty)$ as the 
space of all bounded $\B^\N$-measurable functions $f\colon\Omega_\infty\to\R$.

\begin{theorem}[Kolmogorov] \label{thm:kol}
 Let $\E$ be a convex expectation on $(\R^d,\Cb(\R^d))$ which is continuous from above
 and define $(\Omega_\infty,\H_\infty,\E_\infty)$ as before. Then, there exists a unique 
 convex expectation $\bar{\E}_\infty\colon\L^\infty(\Omega_\infty)\to\R$ with 
 $\bar{\E}_\infty[f]=\E_\infty[f]$ for all $f\in\H_\infty$ which is continuous from below
 on $\L^\infty(\Omega_\infty)$ and continuous from above on
 \[ (H_\infty)_\delta:=\{f\in\L^\infty(\Omega_\infty)\colon\mbox{there exists } 
 	(f_n)_{n\in\N}\subset\H_\infty\mbox{ with } f_n\downarrow f\}. \]
 Moreover, the sequence $(\xi_n)_{n\in\N}$ defined by
 \[ \xi_n\colon\Omega_\infty\to\R, \; x\mapsto x_n \quad\mbox{for all } n\in\N \]
 is iid and satisfies $\E_\infty[f(\xi_n)]=\E[f]$ for all $f\in\Cb(\R^d)$. 
\end{theorem}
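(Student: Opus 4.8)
The plan is to deduce the theorem as a direct application of the extension result \cite[Theorem~4.6]{DKN2018}, treating the iid and distributional statements as consequences of the construction already carried out before the theorem. First I would restrict $\E$ to $\Lipb(\R^d)$; this is again a convex expectation continuous from above, and the associated family $(\E_n)_{n\in\N}$ together with $\E_\infty$ on $\H_\infty$ is precisely the object defined above the theorem. By the preceding lemma, under $\E_\infty$ the sequence $(\xi_n)_{n\in\N}$ is already iid and satisfies $\E_\infty[f(\xi_n)]=\E[f]$ for all $f\in\Lipb(\R^d)$. Hence only two tasks remain: to produce the unique extension $\bar{\E}_\infty$ to $\L^\infty(\Omega_\infty)$, and to upgrade the distributional identity from $\Lipb$ to $\Cb$.

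For the extension I would verify the hypotheses of \cite[Theorem~4.6]{DKN2018}. Since $\Lipb$ is closed under $\min$, $\max$ and contains the constants, $\H_\infty$ is a Stone vector lattice, and as Lipschitz functions generate $\B((\R^d)^n)$ it generates $\B^\N$. Well-definedness of $\E_\infty$ rests on the consistency identity $\E_{n+1}[f\circ\pi_n]=\E_n[f]$ for $f\in\Lipb((\R^d)^n)$, which holds because $f\circ\pi_n$ does not depend on the last coordinate and $\E$ preserves constants. Continuity from above of each $\E_n$ I would establish by induction: given $f_k\downarrow 0$ in $\Lipb((\R^d)^{n+1})$, the disintegrated functions $g_k(x):=\E[f_k(x,\cdot)]$ are decreasing (monotonicity of $\E$), lie in $\Lipb((\R^d)^n)$ by the Lipschitz estimate recorded just before the theorem, and converge pointwise to $0$, since for each fixed $x$ we have $f_k(x,\cdot)\downarrow 0$ in $\Lipb(\R^d)$ and $\E$ is continuous from above; the inductive hypothesis then yields $\E_{n+1}[f_k]=\E_n[g_k]\downarrow 0$. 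With these hypotheses verified, \cite[Theorem~4.6]{DKN2018} furnishes the unique convex expectation $\bar{\E}_\infty$ on $\L^\infty(\Omega_\infty)$ that is continuous from below everywhere and continuous from above on the $\delta$-closure of $\H_\infty$.

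Finally, to extend $\E_\infty[f(\xi_n)]=\E[f]$ from $\Lipb$ to $\Cb$, I would approximate a given $f\in\Cb(\R^d)$ from above by its upper Moreau envelopes $f_k(x):=\sup_{y}\big(f(y)-k|x-y|\big)\in\Lipb(\R^d)$, which are bounded, $k$-Lipschitz, and satisfy $f_k\downarrow f$ pointwise. Then $f_k(\xi_n)\downarrow f(\xi_n)$ belongs to the $\delta$-closure on which $\bar{\E}_\infty$ is continuous from above, so $\bar{\E}_\infty[f(\xi_n)]=\lim_k\bar{\E}_\infty[f_k(\xi_n)]=\lim_k\E[f_k]=\E[f]$, the last equality using continuity from above of $\E$ on $\Cb$. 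The main obstacle is really the continuity from above of $\E_\infty$ across the increasing union $\H_\infty$, where the number of active coordinates is unbounded: the induction above settles each fixed finite level, but the genuine Kolmogorov-type tightness argument gluing the levels into a countably additive object on $\L^\infty(\Omega_\infty)$ is precisely what is delegated to \cite[Theorem~4.6]{DKN2018}, so the remaining work is just the careful hypothesis check outlined here.
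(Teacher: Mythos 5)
Your proposal is correct and follows exactly the route the paper intends: the paper offers no written proof beyond declaring the theorem ``a direct application of~\cite[Theorem~4.6]{DKN2018}'', and your argument simply makes explicit the routine hypothesis checks (consistency of $(\E_n)_{n\in\N}$, continuity from above at each finite level via the Lipschitz estimate recorded before the theorem, with the cross-level gluing correctly delegated to the cited extension theorem) together with the Lipschitz-envelope upgrade from $\Lipb$ to $\Cb$, which matches the approximation device the paper itself uses elsewhere (e.g.\ in the proof of Theorem~\ref{thm:LLN2}).
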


While the property that a random vector has mean zero w.r.t.  a convex expectation 
might seem quite restrictive, this can always be achieved by a simple transformation
of the convex expectation if the random vector has non-negative mean. If the convex
expectation is defined as a supremum over a set of probability measures, the transformed
expectation will be given by a supremum over a smaller set of measures which satisfy
an additional constraint, see Subsection~\ref{sec:Wasser2}.

\begin{lemma} \label{lem:mean}
 Let $(\Omega,\H, \E)$ be a convex expectation space and $\xi\in\H^d$ with
 $\E[a\xi]\geq 0$ for all $a\in\R^d$. Then,
 \[ \tilde{\E}\colon\H\to\R,\; X\mapsto\inf_{a\in\R^d}\E[X+a\xi] \]
 is a convex expectation with $\tilde{\E}[a\xi]=0$ for all $a\in\R^d$. If $\E$ is continuous 
 from above, the same holds for $\tilde{\E}$. Moreover, in the case $\E[a\xi]=0$ for all 
 $a\in\R^d$, it holds that $\E=\tilde{\E}$. 
\end{lemma}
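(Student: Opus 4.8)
The plan is to verify the four defining axioms of a convex expectation for $\tilde{\E}$ and then dispatch the two additional assertions. I will lean throughout on the elementary properties collected in Lemma~\ref{lem:E} and on the convexity of $\E$. The first task is to check that $\tilde{\E}$ is genuinely real-valued. The upper bound is free: taking $a=0$ gives $\tilde{\E}[X]\leq\E[X]<\infty$. For the lower bound I would exploit the hypothesis $\E[a\xi]\geq 0$ through a single convexity trick. Since $\tfrac{1}{2}(X+a\xi)+\tfrac{1}{2}(-X)=\tfrac{1}{2}a\xi$, convexity of $\E$ yields $\E[\tfrac{1}{2}a\xi]\leq\tfrac{1}{2}\E[X+a\xi]+\tfrac{1}{2}\E[-X]$, and hence
\[ \E[X+a\xi]\geq 2\E\big[\tfrac{1}{2}a\xi\big]-\E[-X]\geq -\E[-X] \]
for every $a\in\R^d$, where the last step uses $\E[\tfrac{1}{2}a\xi]=\E[(\tfrac{a}{2})\xi]\geq 0$. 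Taking the infimum over $a$ gives $-\E[-X]\leq\tilde{\E}[X]\leq\E[X]$, so $\tilde{\E}\colon\H\to\R$ is well defined. This is the only place where the assumption on $\xi$ is used in a nonobvious way, and it is the step I would regard as the crux of the well-definedness.

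Next I would verify the axioms. Preservation of constants follows from Lemma~\ref{lem:E}(i): $\tilde{\E}[c]=\inf_{a\in\R^d}\E[c+a\xi]=c+\inf_{a\in\R^d}\E[a\xi]=c$, once one notes that $\inf_{a\in\R^d}\E[a\xi]=0$ (the value $0$ is attained at $a=0$ and, by assumption, never undercut). Monotonicity is immediate, since $X\leq Y$ forces $X+a\xi\leq Y+a\xi$ and hence $\E[X+a\xi]\leq\E[Y+a\xi]$ for every $a$. For convexity, fix $\lambda\in[0,1]$ and $X,Y\in\H$; for arbitrary $a_1,a_2\in\R^d$ put $a:=\lambda a_1+(1-\lambda)a_2$, so that $\lambda X+(1-\lambda)Y+a\xi=\lambda(X+a_1\xi)+(1-\lambda)(Y+a_2\xi)$. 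Convexity of $\E$ then gives
\[ \tilde{\E}[\lambda X+(1-\lambda)Y]\leq\E\big[\lambda(X+a_1\xi)+(1-\lambda)(Y+a_2\xi)\big]
   \leq\lambda\E[X+a_1\xi]+(1-\lambda)\E[Y+a_2\xi], \]
and taking the infimum over $a_1$ and $a_2$ separately yields $\tilde{\E}[\lambda X+(1-\lambda)Y]\leq\lambda\tilde{\E}[X]+(1-\lambda)\tilde{\E}[Y]$.

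Finally I would treat the three remaining claims. The identity $\tilde{\E}[a\xi]=0$ is a reindexing: $\tilde{\E}[a\xi]=\inf_{b\in\R^d}\E[(a+b)\xi]=\inf_{c\in\R^d}\E[c\xi]=0$, since $c=a+b$ ranges over all of $\R^d$. For continuity from above, suppose $\E$ has this property and let $(X_n)_{n\in\N}\subset\H$ with $X_n\downarrow 0$; then $X_n\geq 0$, so monotonicity gives $\tilde{\E}[X_n]\geq\tilde{\E}[0]=0$, while the bound from the $a=0$ choice gives $\tilde{\E}[X_n]\leq\E[X_n]$. Since $\E[X_n]\downarrow 0$, the sandwich $0\leq\tilde{\E}[X_n]\leq\E[X_n]\to 0$ forces $\tilde{\E}[X_n]\downarrow 0$. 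For the last assertion, assume $\E[a\xi]=0$ for all $a\in\R^d$ and fix $a\in\R^d$; applying Lemma~\ref{lem:E}(vi) to the single random variable $Y_0:=a\xi$ (which satisfies $\E[sY_0]=\E[(sa)\xi]=0$ for every $s\in\R$) gives $\E[X+a\xi]=\E[X]$ for all $X\in\H$, whence $\tilde{\E}[X]=\inf_{a\in\R^d}\E[X]=\E[X]$, i.e.\ $\E=\tilde{\E}$.
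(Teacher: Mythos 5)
Your proof is correct and follows essentially the same route as the paper: convexity via the averaged shift parameter $\lambda a_1+(1-\lambda)a_2$, the reindexing $\inf_b\E[(a+b)\xi]=\inf_c\E[c\xi]=0$, and Lemma~\ref{lem:E}(vi) for the case $\E[a\xi]=0$ are all exactly the paper's arguments. You deviate in two small but worthwhile places. First, you explicitly verify that $\tilde{\E}$ is real-valued via the convexity estimate $0\leq\E[\tfrac{1}{2}a\xi]\leq\tfrac{1}{2}\E[X+a\xi]+\tfrac{1}{2}\E[-X]$, yielding $-\E[-X]\leq\tilde{\E}[X]\leq\E[X]$; the paper passes over this point silently, so your addition closes a genuine (if minor) gap in the exposition. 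Second, for continuity from above you use the sandwich $0=\tilde{\E}[0]\leq\tilde{\E}[X_n]\leq\E[X_n]\downarrow 0$, whereas the paper interchanges the two infima in $\inf_n\inf_a\E[X_n+a\xi]$ and invokes the extension $\E[X_n+a\xi]\downarrow\E[a\xi]$ of continuity from above to nonzero limits; your version is more elementary, needing only continuity from above at $0$ together with monotonicity and constant preservation, which you have already established at that point. Both variants are valid, and the remainder of your argument matches the paper step for step.
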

\begin{proof}
 Clearly, the functional $\tilde{\E}$ is monotone and satisfies $\tilde{\E}[c]=c$ for all 
 $c\in\R$. For every $X,Y\in\H$, $\lambda\in [0,1]$ and $a,b\in\R^d$,
 \begin{align*}
  \tilde{\E}[\lambda X+(1-\lambda)Y]
  &\leq\E[\lambda X+(1-\lambda)Y+(\lambda a+(1-\lambda)b)\xi] \\
  &\leq\lambda\E[X+a\xi]+(1-\lambda)\E[Y+b\xi].
 \end{align*}
 Taking the infimum over $a,b\in\R^d$ yields 
 \[ \tilde{\E}[\lambda X+(1-\lambda)Y]\leq\lambda\tilde{\E}[X]+(1-\lambda)\tilde{\E}[Y]. \]
 Since $\E[a\xi+b\xi]=\E[(a+b)\xi]\geq 0$ for all $a,b\in\R^d$ with equality for $b=-a$,
 we obtain $\tilde{\E}[a\xi]=0$ for all $a\in\R^d$. Assume that $\E$ is continuous from 
 above and let $(X_n)_{n\in\N}\subset\H$ with $X_n\downarrow 0$. Then, 
 \[ \inf_{n\in\N}\tilde{\E}[X_n]=\inf_{n\in\N}\inf_{a\in\R^d}\E[X_n+a\xi]
 	=\inf_{a\in\R^d}\inf_{n\in\N}\E[X_n+a\xi]=\inf_{a\in\R^d}\E[a\xi]=0 \]
 which shows that $\tilde{\E}$ is continuous from above. If $\E[a\xi]=0$ for all $a\in\R^d$,
 it follows from Lemma~\ref{lem:E}(vi) that $\tilde{\E}[X]=\inf_{a\in\R^d}\E[X+a\xi]=\E[X]$
 for all $X\in\H$. 
\end{proof}

\section{Tightness of Wasserstein balls}

Let $p,q\in [1,\infty)$ with $p>q$ and denote by $\p_p$ the $p$-Wasserstein space endowed
with the $p$-Wasserstein distance $\W_p$.

\begin{lemma} \label{lem:tightW}
 Let $\mu\in\p_p$, $R\geq 0$ and $M:=\{\nu\in\p_p\colon\W_p(\mu,\nu)\leq R\}$. 
 Then, for every $\epsilon>0$, there exists $r\geq 0$ with 
 \[ \sup_{\nu\in M}\int_{B(r)^c}|x|^q\,\nu(\d x)\leq\epsilon. \]
\end{lemma}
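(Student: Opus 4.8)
The plan is to transfer the uniform control from the coupling cost to the tails of the measures $\nu\in M$. First I would fix, for each $\nu\in M$, a coupling $\pi=\pi_\nu\in\Pi(\mu,\nu)$ that is almost optimal, say with $\int_{\R^d\times\R^d}|x-y|^p\,\pi(\d x,\d y)\le R^p+1$; such a coupling exists by the very definition of the infimum in $\W_p$ (alternatively one may invoke existence of optimal couplings on $\R^d$). Renaming the integration variable of $\nu$ to $y$, the target integral becomes $\int_{B(r)^c}|y|^q\,\nu(\d y)=\int_{\{|y|>r\}}|y|^q\,\pi(\d x,\d y)$. Since every bound below will depend only on $\mu$, $R$, $p$ and $q$ and never on $\nu$ itself, the resulting estimate will automatically be uniform over $M$.

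The key decomposition is the following. From $|y|\le|x|+|x-y|$ the event $\{|y|>r\}$ is contained in $\{|x|>r/2\}\cup\{|x-y|>r/2\}$, and convexity of $t\mapsto t^q$ gives $|y|^q\le 2^{q-1}(|x|^q+|x-y|^q)$. Combining these two facts splits $\int_{\{|y|>r\}}|y|^q\,\d\pi$ into four integrals $\int_A h\,\d\pi$, where $h$ is $|x|^q$ or $|x-y|^q$ and $A$ is $\{|x|>r/2\}$ or $\{|x-y|>r/2\}$. I would bound these separately. The pure tail $\int_{\{|x|>r/2\}}|x|^q\,\d\mu$ tends to $0$ as $r\to\infty$ because $\mu\in\p_p\subset\p_q$, and the pure tail satisfies $\int_{\{|x-y|>r/2\}}|x-y|^q\,\d\pi\le(r/2)^{q-p}\int|x-y|^p\,\d\pi\le(r/2)^{q-p}(R^p+1)$, which vanishes since $q<p$. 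The two cross terms, in which the integrand and the restricting event involve different quantities, are handled by H\"older's inequality with exponents $p/q$ and its conjugate: $\int_{\{|x-y|>r/2\}}|x|^q\,\d\pi\le\big(\int|x|^p\,\d\mu\big)^{q/p}\pi(|x-y|>r/2)^{1-q/p}$ and $\int_{\{|x|>r/2\}}|x-y|^q\,\d\pi\le\big(\int|x-y|^p\,\d\pi\big)^{q/p}\mu(|x|>r/2)^{1-q/p}$. Markov's inequality then makes the small-measure factors vanish, via $\pi(|x-y|>r/2)\le(2/r)^p(R^p+1)$ and $\mu(|x|>r/2)\to0$.

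The point where the hypothesis $p>q$ is indispensable, and the main thing to get right, is precisely these cross terms: the factor $\pi(A)^{1-q/p}$ is a genuinely vanishing quantity only because the conjugate exponent $1-q/p$ is strictly positive, and the same strictness drives the bound $(r/2)^{q-p}\to0$ for the pure $|x-y|^q$ tail. Collecting the four estimates yields a function $\rho(r)$, depending only on $\mu,R,p,q$ and with $\rho(r)\to0$ as $r\to\infty$, such that $\int_{B(r)^c}|y|^q\,\nu(\d y)\le\rho(r)$ for every $\nu\in M$. Choosing $r$ so large that $\rho(r)\le\epsilon$ then completes the proof.
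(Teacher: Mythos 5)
Your proof is correct, and it follows a genuinely different decomposition than the paper's. The paper fixes an optimal coupling $\pi\in\Pi(\mu,\nu)$ and uses a two-radius construction: a first radius $r_1$ controls the $\mu$-tail and, via a single H\"older estimate, the integral of $|x-y|^q$ over $B(r_1)^c\times B(r_1)^c$; a second radius $r_2\geq r_1$ is then chosen so that the pointwise ratio $|y|^q/|x-y|^p$ is uniformly small for $x\in B(r_1)$ and $y\in B(r_2)^c$, which lets the near-diagonal contribution be absorbed directly into the transport cost $\W_p(\mu,\nu)^p\leq R^p$ without any Markov-type step. You instead split the tail event $\{|y|>r\}$ by the triangle inequality into $\{|x|>r/2\}\cup\{|x-y|>r/2\}$ and the integrand by convexity into $2^{q-1}(|x|^q+|x-y|^q)$, producing four terms each of which vanishes with an explicit rate: a $\mu$-tail, the power comparison $|x-y|^q\leq (r/2)^{q-p}|x-y|^p$, and two H\"older--Markov cross terms. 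Both arguments exploit $p>q$ in the same structural place (the exponent $1-q/p>0$ on the small-measure factor, respectively the negative exponent $q-p$), and both transfer the uniform cost bound through a coupling; your use of near-optimal couplings with cost at most $R^p+1$ also sidesteps the existence of optimal couplings, which the paper implicitly invokes. What your route buys is modularity and an explicit quantitative rate $\rho(r)$ depending only on $(\mu,R,p,q)$; what the paper's ratio trick buys is that the delicate cross region $B(r_1)\times B(r_2)^c$ is handled in one stroke against the full cost $\int |x-y|^p\,\d\pi$, at the price of the less explicit two-stage choice of radii.
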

\begin{proof}
 Let $\epsilon>0$ and choose $r_1\geq 0$ with 
 \begin{equation} \label{eq:tightW}
  2^{q-1}\int_{B(r_1)^c}|x|^q\,\mu(\d x)\leq\frac{\epsilon}{4} \quad\mbox{and}\quad 
  2^{q-1}R^\frac{q}{p}\mu\big(B(r_1)^c\big)^\frac{p-q}{p}\leq\frac{\epsilon}{4}.
 \end{equation}
 Let $\nu\in M$ and choose an optimal coupling $\pi\in\Pi(\mu,\nu)$, i.e., 
 \[ \W_p(\mu,\nu)=\left(\int_{\R^d\times\R^d}|x-y|^p\,\pi(\d x, \d y)\right)^\frac{1}{p}. \]
 It follows from H\"older's inequality and inequality~\eqref{eq:tightW} that 
 \begin{align}
  &\int_{B(r_1)^c\times B(r_1)^c}|x-y|^q\,\pi(\d x, \d y) \nonumber \\
  &\leq\left(\int_{\R^d\times\R^d}|x-y|^p\,\pi(\d x, \d y)\right)^\frac{q}{p}
  \pi\big(B(r_1)^c\times B(r_1)^c)^\frac{p-q}{p} \nonumber \\
  &\leq R^\frac{q}{p}\pi\big(B(r_1)^c\times\R^d\big)^\frac{p-q}{p} 
  =R^\frac{q}{p}\mu\big(B(r_1)^c\big)^\frac{p-q}{p}\leq\frac{\epsilon}{4}. 
 \end{align}
 Moreover, for every $x\in B(r_1)$, 
 \[ \frac{|y|^q}{|x-y|^p}\leq\frac{|y|^q}{(|y|-|x|)^p}\leq\frac{|y|^q}{(|y|-r_1)^p}\to 0
 	\quad\mbox{as } |y|\to\infty. \]
 Hence, we can choose $r_2\geq r_1$ with 
 \begin{equation} \label{eq:tightW2}
  \frac{|y|^q}{(|x-y|)^p}\leq\frac{\epsilon}{2R^p}
  \quad\mbox{for all } x\in B(r_1) \mbox{ and } y\in B(r_2)^c. 
 \end{equation}
 It follows from inequality~\eqref{eq:tightW}-\eqref{eq:tightW2} that
 \begin{align*}
  &\int_{B(r_2)^c}|y|^q\, \nu(\d y)
  =\int_{B(r_1)\times B(r_2)^c}|y|^q\,\pi(\d x, \d y)
  +\int_{B(r_1)^c\times B(r_2)^c}|y|^q\,\pi(\d x, \d y) \\
  &\leq\int_{B(r_1)\times B(r_2)^c}\frac{|y|^q}{|x-y|^p}\cdot |x-y|^p\,\pi(\d x, \d y) \\
  &\quad\; +2^{q-1}\int_{B(r_1)^c\times B(r_2)^c}|x|^q+|x-y|^q\,\pi(\d x, \d y) \\
  &\leq\frac{\epsilon}{2R^p}\int_{\R^d\times\R^d}|x-y|^p\,\pi(\d x, \d y)
  +2^{q-1}\int_{B(r_1)^c}|x|^q\,\mu(\d x) \\
  &\quad\; +2^{q-1}\int_{B(r_1)^c\times B(r_1)^c}|x-y|^q\,\pi(\d x, \d y) \\
  &\leq\frac{\epsilon}{2R^p}\W_p(\mu,\nu)^p+\frac{\epsilon}{2}\leq\epsilon.  \qedhere
 \end{align*}
\end{proof}

\bibliographystyle{abbrv} 
\bibliography{clt}

\end{document}